\newtheorem{thm}{Theorem}[section]
\newtheorem{lem}[thm]{Lemma}
\newtheorem{prop}[thm]{Proposition}
\newtheorem{cor}[thm]{Corollary}
\theoremstyle{definition}
\theoremstyle{remark}
\newtheorem{remark}[thm]{Remark}
\newtheorem{example}[thm]{Example}
\newtheorem{examples}[thm]{Examples}
\numberwithin{equation}{section}
\newcommand{\K}{{\mathbb K}} 
 \newcommand{\R}{{\mathbb R}}
 \newcommand{\C}{{\mathbb C}}
\newcommand{\sph}{{\mathbb S}}
\newcommand{\an}{{\mathcal O}} 
 \newcommand{\J}{{\mathcal J}}
\newcommand{\mer}{{\mathcal M}} 
\newcommand{\ideal}{{\mathcal I}}
\newcommand{\gtp}{{\mathfrak p}} 
\newcommand{\gtm}{{\mathfrak m}} \newcommand{\gtn}{{\mathfrak n}}
\newcommand{\gta}{{\mathfrak a}}
\newcommand{\gtg}{{\mathfrak g}}
\newcommand{\Fhaz}{{\EuScript F}}
\newcommand{\Jhaz}{{\EuScript I}}
\newcommand{\Ss}{{\EuScript S}}
\newcommand{\Tt}{{\EuScript T}}
\newcommand{\Reg}{\operatorname{Reg}}
\newcommand{\Sing}{\operatorname{Sing}}
\newcommand{\Int}{\operatorname{Int}}
\newcommand{\cl}{\operatorname{Cl}}
\newcommand{\id}{\operatorname{id}}
\newcommand{\zar}{\operatorname{zar}}
\newcommand{\x}{{\tt x}} \newcommand{\y}{{\tt y}} 
\newcommand{\z}{{\tt z}} 
 \renewcommand{\u}{{\tt u}}
\newcommand{\veps}{\varepsilon}
\newcommand{\ol}{\overline}
\numberwithin{equation}{section}
\begin{document}
\title[On globally defined semianalytic sets]{On globally defined semianalytic sets}

\author{Francesca Acquistapace}
\author{Fabrizio Broglia}
\address{Dipartimento di Matematica, Universit\`a degli Studi di Pisa, Largo Bruno Pontecorvo, 5, 56127 
PISA (ITALY)}
\email{acquistf@dm.unipi.it, broglia@dm.unipi.it}

\author{Jos\'e F. Fernando}
\address{Departamento de \'Algebra, Facultad de Ciencias Matem\'aticas, Universidad Complutense de Madrid, 28040 MADRID (SPAIN)}
\email{josefer@mat.ucm.es}

\date{28/02/2015}
\subjclass[2010]{14P15, 58A07, 32C25 (primary); 26E05, 32C20 (secondary)}
\keywords{$C$-analytic and $C$-semianalytic sets, subanalytic sets, local structure of finite holomorphic maps, points of non-coherence, analytic normalization}

\thanks{Authors supported by Spanish GAAR MTM2011-22435, Spanish MTM2014-55565 and the ``National Group for Algebraic and Geometric Structures, and their Applications'' (GNSAGA - INdAM). First and second authors are also supported by Italian MIUR. Third author is also supported by Grupos UCM 910444. This article is the fruit of the close collaboration of the authors in the last ten years and has been mainly written during a one-year research stay of third author in the Dipartimento di Matematica of the Universit\`a di Pisa. Third author would like to thank the department for the invitation and the very pleasant working conditions. The one-year research stay of the third author is partially supported by MECD grant PRX14/00016.}

\begin{abstract}--- \ 
In this work we present the concept of \em $C$-semianalytic subset \em of a real analytic manifold and more generally of a real analytic space. $C$-semianalytic sets can be understood as the natural generalization to the semianalytic setting of global analytic sets introduced by Cartan ($C$-analytic sets for short). More precisely $S$ is a $C$-semianalytic subset of a real analytic space $(X,\an_X)$ if each point of $X$ has a neighborhood $U$ such that $S\cap U$ is a finite boolean combinations of global analytic equalities and strict inequalities on $X$. By means of paracompactness $C$-semianalytic sets are the locally finite unions of finite boolean combinations of global analytic equalities and strict inequalities on $X$.

The family of $C$-semianalytic sets is closed under the same operations as the family of semianalytic sets: locally finite unions and intersections, complement, closure, interior, connected components, inverse images under analytic maps, sets of points of dimension $k$, etc. although they are defined involving only global analytic functions. In addition, we characterize subanalytic sets as the images under proper analytic maps of $C$-semianalytic sets.

We prove also that \em the image of a $C$-semianalytic set $S$ under a proper holomorphic map between Stein spaces is again a $C$-semianalytic set\em. The previous result allows us to understand better the structure of the set $N(X)$ of points of non-coherence of a $C$-analytic subset $X$ of a real analytic manifold $M$. We provide a global geometric-topological description of $N(X)$ inspired by the corresponding local one for analytic sets due to Tancredi-Tognoli (1980), which requires complex analytic normalization. As a consequence it holds that \em $N(X)$ is a $C$-semianalytic set of dimension $\leq\dim(X)-2$\em. 
\end{abstract}
\maketitle

\section{Introduction}\label{s1}

Let $M$ be a real analytic manifold. A subset $X$ of $M$ is \em (real) analytic \em if for each point $x\in M$, there exists an open neighborhood $U^x$ such that $X\cap U^x=\{f_1=0,\ldots,f_r=0\}\subset U^x$ for some $f_1,\ldots,f_r\in\an(U^x)$. Already in the 1950s Cartan, Whitney and Bruhat noticed that this class of sets, whose definition is a straight conversion to the real case of the concept of complex analytic subset of a complex analytic manifold, does not enjoy all the good properties of complex analytic sets (for instance: coherence, irreducible components, etc.). The global behavior of these sets could be wild as it is shown in the exotic examples presented in \cite{bc,c,wb}. Concerning this fact Cartan wrote in \cite[pag. 49]{c2} the following:

{\em ``\ldots la seule notion de sous-ensemble analytique r\'eel (d'une vari\'et\'e analytique-r\'eelle $V$) qui ne conduise pas \`a des propri\'et\'es pathologiques doit se r\'ef\'erer \`a l'espace complexe ambiant: il faut consid\'erer les sous-ensembles ferm\'es $E$ de $V$ tels qu'il existe une complexification $X$ de $V$ et un sous-ensemble analytique-complexe $E'$ de $W$, de mani\`ere que $E = W \cap E'$. On d\'emontre que ce sont aussi les sous-ensembles de $V$ qui peuvent \^etre d\'efinis globalement par un nombre fini d'\'equations analytiques. La notion de sous-ensemble analytique-r\'eel a ainsi un caract\`ere essentiellement global, contrairement \`a ce qui avait lieu pour les sous-ensembles analytiques-complexes.'' }

The special class of real analytic subsets of a real analytic manifold $M$ introduced by Cartan \cite{c} is the family of {\em analytic subsets of $M$ that can be described by finitely many global analytic equations}. They are commonly known as \em $C$-analytic sets\em. Of course, such class contains classical coherent analytic sets but also more general ones as Whitney's umbrella. Naturally, for each point $x$ of a real analytic set $S$ there exists a small open neighborhood $U\subset M$ of $x$ such that the intersection $S\cap U$ is a $C$-analytic subset of $U$.

In Real Geometry also appear naturally sets described by inequalities. A subset $S$ of a real analytic manifold $M$ is a \em semianalytic set \em if for each point $x\in M$ there exists an open neighborhood $U^x$ such that $S\cap U^x$ is a finite union of sets of the type $\{f=0,g_1>0,\ldots,g_r>0\}\subset U^x$ where $f,g_i\in\an(U^x)$ are analytic functions on $U^x$. 

Semianalytic sets (and more generally subanalytic sets) were introduced by \L ojasiewicz in \cite{l,l1} and were developed later by many authors: Bierstone-Milman \cite{bm,bm2}, Hironaka \cite{hi1,hi2,hi3,hi4}, Gabrielov \cite{ga}, Hardt \cite{h1,h2}, Galbiati \cite{gal}, Paw\l ucki \cite{pa}, Denkowska \cite{de}, Stasica \cite{s}, Kurdyka \cite{k}, Parusi\'nski \cite{p}, Shiota \cite{sh} between others. These sets have many and wide applications in complex and real analytic geometry.

While the family of complex analytic sets is stable under proper holomorphic maps between complex analytic spaces (Remmert's Theorem \cite[VII.\S2.Thm.2]{n}), an analogous property does not hold in the real analytic setting. The image of a real analytic set under a proper real analytic map is not even in general a semianalytic set. Indeed, this fact promoted the introduction of subanalytic sets by \L ojasiewicz \cite{l} in the 1960s.

In \cite{gal2} Galbiati proved that if $f:X\to Y$ is a proper analytic map between real analytic spaces that admits a proper complexification $\widetilde{f}:\widetilde{X}\to\widetilde{Y}$ and $Z$ is a $C$-analytic subset of $X$, then $f(X\setminus Z)$ is a semianalytic set. In \cite{hi4} Hironaka quoted this result and remarked that $f(X\setminus Z)$ is \em `globally semianalytic in $Y$ with respect to the given complexification $\widetilde{Y}$ of $Y$'\em. In \cite[p.404]{tt} it is used without proof that the image of a semianalytic set under a proper invariant holomorphic map between Stein spaces is semianalytic. Although it seemed to be an assumed result in the 1970s, we have found no precise reference with a proof to this fact.

At this point, it is natural to wonder whether it is possible to find, amalgamating the concepts of $C$-analytic sets and semianalytic sets, a family of semianalytic sets `globally defined' in the sense of Cartan and Hironaka that enjoy a good behavior with respect to basic boolean, topological and algebraic operations and admit a kind of direct image theorem for good enough proper analytic maps. In addition, assume that the set of points of a $C$-analytic set satisfying a property $P$ is semianalytic. We would like to understand when $P$ provides a globally defined semianalytic set.

A global approach to semianalytic sets was explored first by Andradas-Br\"ocker-Ruiz \cite{abr2,rz1,rz2,rz3} under compactness assumptions and by Andradas-Castilla \cite{ac} for low dimension. They defined a {\em global semianalytic subset $S$ of a real analytic manifold $M$} as a finite union of global basic semianalytic sets, that is, a finite union of sets of the type $\{f=0,g_1>0,\ldots,g_r>0\}$ where $f,g_j\in\an(M)$. As far as we know, it is not known whether the family of global semianalytic sets is closed under taking closure, interior or connected components except for $\dim(M)\leq 2$. There exist further information concerning closure (and interior) of a global semianalytic set if $\dim(M)=3$ but nothing conclusive for higher dimension if the involved global semianalytic set has non-compact boundary. 

In this work we refine the previous approach and consider the family of {\it $C$-semianalytic sets}, which is constituted by those semianalytic sets that are locally finite unions of global basic semianalytic sets (see Section \ref{s3}). As $M$ is paracompact, $C$-semianalytic sets are those semianalytic sets of $M$ that admit in a small neighborhood of each point of $M$ a local description involving only global analytic functions on $M$ (see Lemma \ref{cover}). Roughly speaking, we are considering the class of those semianalytic sets that can be described using only elements of $\an(M)$. 

We see in this article that the family of $C$-semianalytic sets enjoys the desired properties. However, it is still too large to have a consistent concept of irreducibility and a reasonable theory of irreducible components and requires a refinement to approach this problem. To that end in \cite{fe1} it is introduced the subfamily of `amenable' $C$-semianalytic sets and it is developed a satisfactory theory of irreducible components.

\subsection{Main results}

We present next the main results of this work.

\subsubsection{Main properties of $C$-semianalytic sets}

The family of $C$-semianalytic sets is closed under the same operations as semianalytic sets: locally finite unions and intersections, complement, closure, interior, connected components, sets of points of dimension $k$ and inverse images of analytic maps. Analogously to the semianalytic and semialgebraic cases: \em open $C$-semianalytic sets and closed $C$-semianalytic sets admits local homogeneous descriptions \em (see Lemma \ref{hop}). There exist many semianalytic sets that are not $C$-semianalytic sets, we refer the reader to Examples \ref{counterexamples}. In addition $C$-semianalytic sets satisfy the following type of complex-proper image theorem that we prove in Section \ref{s4}.

\subsubsection{Images of $C$-semianalytic sets under proper holomorphic maps}
Let $(X,\an_X)$ and $(Y,\an_Y)$ be reduced Stein spaces. Let $\sigma:X\to X$ and $\tau:Y\to Y$ be anti-involutions. Assume 
$$
X^\sigma:=\{x\in X:\ x=\sigma(x)\}\quad\text{and}\quad Y^\tau:=\{y\in Y:\ y=\tau(y)\}
$$ 
are non-empty sets. It holds that $(X^\sigma,\an_{X^\sigma})$ and $(Y^\tau,\an_{Y^\tau})$ are real analytic spaces. Observe that $(X,\an_X)$ and $(Y,\an_Y)$ are complexifications of $(X^\sigma,\an_{X^\sigma})$ and $(Y^\tau,\an_{Y^\tau})$. 

We will say that a $C$-semianalytic set $S\subset X^\sigma$ is \em ${\mathcal A}(X^\sigma)$-definable \em if for each $x\in X^\sigma$ there exists an open neighborhood $U^x$ such that $S\cap U^x$ is a finite union of sets of the type $\{F|_{X^\sigma}=0,G_1|_{X^\sigma}>0,\ldots,G_r|_{X^\sigma}>0\}$ where $F,G_i\in\an(X)$ are invariant holomorphic sections. We denote the set of $\sigma$-invariant holomorphic functions of $X$ restricted to $X^\sigma$ with ${\mathcal A}(X^\sigma)$.

\begin{thm}[Direct image theorem]\label{properint}
Let $F:(X,\an_X)\to(Y,\an_Y)$ be an invariant proper holomorphic map, that is, $\tau\circ F=F\circ\sigma$. Let $S\subset X^\sigma$ be an ${\mathcal A}(X^\sigma)$-definable $C$-semianalytic set. We have 
\begin{itemize}
\item[(i)] $F(S)$ is a $C$-semianalytic subset of $Y^\tau$ of the same dimension as $S$. 
\item[(ii)] Let $E:=\cl(F^{-1}(Y^\tau)\setminus X^\sigma)$. Then $F(E\cap S)$ is a $C$-semianalytic subset of $Y^\tau$.
\item[(iii)] If $S$ is a $C$-analytic set and $F^{-1}(Y^\tau)=X^\sigma$, then $F(S)$ is also a $C$-analytic set.
\end{itemize}
\end{thm}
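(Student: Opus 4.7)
The plan is to reduce the statement to an image theorem for a single globally defined basic $\mathcal{A}(X^\sigma)$-definable piece of $S$, and then to transform the strict inequalities into complex analytic equations on an auxiliary Stein space so that Remmert's proper mapping theorem applies. The main obstacle will be to identify, inside the resulting $C$-analytic ``shadow'', the actual image $F(S)$, using the structure of the finite fibres via analytic normalization.

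First, by paracompactness and the $\mathcal{A}(X^\sigma)$-definability of $S$, I would write $S$ as a locally finite union $\bigcup_\alpha S_\alpha$ with $S_\alpha=\{h_\alpha=0,\ g_{\alpha,1}>0,\ldots,g_{\alpha,r_\alpha}>0\}$, each $h_\alpha,g_{\alpha,j}$ the restriction to $X^\sigma$ of a $\sigma$-invariant holomorphic section of $\an_X$. Properness of $F$ keeps $\{F(S_\alpha)\}$ locally finite in $Y^\tau$, so since $C$-semianalytic sets are closed under locally finite unions it is enough to prove (i) for a single basic $S_0=\{h=0,\ g_1>0,\ldots,g_r>0\}$. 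I would then work in the reduced Stein space $X':=X\times\C^{2r}$ endowed with the anti-involution $\sigma'$ obtained from $\sigma$ and coordinatewise conjugation on $\C^{2r}$, so that $(X')^{\sigma'}=X^\sigma\times\R^{2r}$, and introduce the global $\sigma'$-invariant complex analytic subspace
$$
Z:=\{(x,t,u)\in X':\ h(x)=0,\ t_j^2=g_j(x),\ t_j u_j=1,\ j=1,\ldots,r\}.
$$
The first projection $\pi_X\colon Z\to X$ is finite (over each admissible $x$ there are at most $2^r$ points), and the composite $\Phi:=F\circ\pi_X\colon Z\to Y$ is a proper invariant holomorphic map. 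Since a real solution $(t,u)$ of the defining equations forces $g_j(x)>0$, one has $\pi_X(Z^{\sigma'})=S_0$ and $\Phi(Z^{\sigma'})=F(S_0)$.

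Now Remmert's proper mapping theorem gives that $\Phi(Z)$ is a closed $\tau$-invariant complex analytic subset of the reduced Stein space $Y$, and hence $\Phi(Z)\cap Y^\tau$ is a $C$-analytic subset of $Y^\tau$ that already contains $F(S_0)$. To carve $F(S_0)$ out of this $C$-analytic shadow I would analyse the fibres of $\Phi$: each is a finite $\sigma'$-invariant configuration of points of $Z$, and $y\in F(S_0)$ if and only if at least one of those points is $\sigma'$-fixed. Using the Stein factorization of $\Phi$ together with the local complex analytic normalization of $\Phi(Z)$ (in the spirit of the Tancredi--Tognoli technology quoted in the abstract), this ``at least one real preimage'' condition translates into a finite boolean combination of equalities and strict inequalities among the elementary symmetric functions of the fibre coordinates $t_j,u_j$; those symmetric functions are $\sigma'$-invariant holomorphic functions on $Z$ that descend through $\Phi_*\an_Z$ to global invariant holomorphic data on $Y$. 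This produces a local description of $F(S_0)$ near each $y_0\in Y^\tau$ as a finite union of global basic $C$-semianalytic sets, proving (i); the dimension claim follows because $\pi_X|_{Z^{\sigma'}}$ is finite and $\Phi|_{Z^{\sigma'}}$ is proper.

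Finally, (iii) is the $r=0$ specialization: with no inequalities, Remmert applied to the complexification $S_\C$ of $S$ gives that $F(S_\C)$ is $\tau$-invariant complex analytic, and the hypothesis $F^{-1}(Y^\tau)=X^\sigma$ yields $F(S)=F(S_\C)\cap Y^\tau$, a $C$-analytic set. For (ii), I would check that $E=\cl(F^{-1}(Y^\tau)\setminus X^\sigma)$ is itself a closed $C$-semianalytic subset of $X$ (using that $F^{-1}(Y^\tau)$ is the preimage of the $C$-analytic set $Y^\tau$ under the real analytic map underlying $F$, and that $X\setminus X^\sigma$ is an open semianalytic set), so that $E\cap S$ is $C$-semianalytic in $X^\sigma$ and (i) applied to $E\cap S$ completes the proof. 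The genuinely hard step is the invariant-theoretic extraction inside Remmert's image: turning ``the fibre contains a real point'' into a global boolean condition on $Y^\tau$ built from invariant holomorphic data, which is where analytic normalization and the direct-image sheaf $\Phi_*\an_Z$ become essential.
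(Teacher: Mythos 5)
Your strategy --- encode each strict inequality of the basic piece $S_0=\{h=0,\,g_1>0,\ldots,g_r>0\}$ by auxiliary holomorphic equations $t_j^2=g_j(x)$, $t_ju_j=1$ on a Stein space $Z\subset X\times\C^{2r}$, apply Remmert to $\Phi:=F\circ\pi_X$, and then extract $F(S_0)$ from the $C$-analytic ``shadow'' $\Phi(Z)\cap Y^\tau$ --- is a genuinely different route from the paper's, and the reduction to a single basic piece and the construction of $Z$ are sound (a real point of $Z$ does force $g_j(x)>0$, $\pi_X$ does have finite fibres, $\Phi$ is proper and invariant). But the step you yourself flag as ``the genuinely hard step'' is where the proposal has a real gap rather than a sketch. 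The condition ``$\Phi^{-1}(y)$ contains a $\sigma'$-fixed point'' is \emph{not} a boolean expression in the values of elementary symmetric functions of the fibre coordinates. Already for a conjugation-stable fibre of $k$ points in $\C$, ``at least one root is real'' requires sign conditions on Hermite--Sylvester subresultant sequences derived from the characteristic polynomial, not on its coefficients alone; for fibre points in $\C^m\times X$ the ``simultaneously real in all coordinates'' requirement makes this substantially harder. What you would be forced to prove is, in effect, a global Tarski--Seidenberg--type projection statement for semianalytic sets with polynomial auxiliary variables --- and that is exactly what the paper quotes as a black box, namely Bierstone--Milman's Theorem~\ref{bm} (= \cite[Thm.~2.2]{bm}). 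The paper sets this up via Theorem~\ref{finite76} (which yields globally defined invariant $H_1,\ldots,H_m$ generating $\Ss^{-1}(\an(X))$ over $F^*(\an(Y)_{\gtn_{y_0}})$) and Artin approximation (Lemma~\ref{artin}), producing a map $H:X\to Y\times\C^m$ that is biholomorphic near the fibre; then Lemma~\ref{transfer} transports the description of $S$ and Theorem~\ref{bm} does the projection. Your assertion that ``those symmetric functions \ldots\ descend through $\Phi_*\an_Z$ to global invariant holomorphic data on $Y$'' is precisely the content of Theorem~\ref{finite76}, which you neither prove nor cite, and even granting it, the boolean extraction would remain unproved.

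There is a second gap in part (ii). You propose to observe that $E=\cl(F^{-1}(Y^\tau)\setminus X^\sigma)$ is $C$-semianalytic and then apply (i) to $E\cap S$. But (i) requires $E\cap S$ to be an \emph{${\mathcal A}(X^\sigma)$-definable} $C$-semianalytic set, and that is precisely the delicate point: $E$ is defined by a closure taken in the ambient complex space $X$ (a $2d$-dimensional real manifold), and passing from ``semianalytic in $X$'' to ``${\mathcal A}(X^\sigma)$-definable $C$-semianalytic in $X^\sigma$'' is where the work is. The paper instead pushes $F^{-1}(Y^\tau)\setminus X^\sigma$ forward under the local embedding $H$ into $Y^\tau\times\R^m\times\R^m$, applies Proposition~\ref{salvation} (a closure statement for ${\mathscr A}$-definable sets resting on excellence of the localizations $\an(Y)_{\gtn_{y_0}}[\u]$) to obtain a controlled description, and only then projects; one cannot simply cite (i).

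Part (iii) coincides with the paper's argument --- complexify $S$, apply Remmert, intersect with $Y^\tau$, and use the hypothesis $F^{-1}(Y^\tau)=X^\sigma$ --- and is correct.
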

\begin{remark}
Let $S$ be a semianalytic subset of $X^\sigma$. For each $x\in X^\sigma$ there exists an open neighborhood $U\subset X^\sigma$ of $x$ such that $S\cap U$ is a $C$-semianalytic subset of $U$. Using this fact and Theorem \ref{properint} one shows straightforwardly that the image of a semianalytic set under an invariant proper holomorphic map between reduced Stein spaces is a semianalytic set.
\end{remark}

The keys to prove Theorem \ref{properint} are \cite[Thm.2.2]{bm} (whose statement is recalled in \ref{bmp}) and the following result that analyzes the local structure of proper surjective holomorphic morphisms between Stein spaces. For each $x\in X$ we denote the maximal ideal of $\an(X)$ associated to $x$ with $\gtm_x$ and for each $y\in Y$ we denote the maximal ideal of $\an(Y)$ associated to $y$ with $\gtn_y$. Recall that compact analytic subsets of a Stein space are finite sets, so the fibers of a proper holomorphic map between Stein spaces are finite sets. Write $F^*(\an(Y)):=\{G\circ F:\ G\in\an(Y)\}\subset\an(X)$ and 
$$
F^*(\an(Y)_{\gtn_y})=\Big\{\frac{G\circ F}{H\circ F}:\ G,H\in\an(Y)\ \text{and}\ H\notin\gtn_y\Big\}.
$$

\begin{thm}[Local structure of finite holomorphic morphisms]\label{finite76}
Let $F:(X,\an_X)\to(Y,\an_Y)$ be a surjective proper holomorphic map between reduced Stein spaces and let $y_0\in Y$. Denote the multiplicative subset of $\an(X)$ constituted by all the holomorphic functions on $X$ that do not vanish at the set $F^{-1}(y_0):=\{x_1,\ldots,x_\ell\}$ with $\Ss:=\an(X)\setminus(\gtm_{x_1}\cup\cdots\cup\gtm_{x_\ell})$. Then $\Ss^{-1}(\an(X))$ is a finitely generated $\an(Y)_{\gtn_{y_0}}$-module and there exist invariant holomorphic functions $H_1,\ldots,H_m\in\an(X)$ such that 
$$
\Ss^{-1}(\an(X))=F^*(\an(Y)_{\gtn_{y_0}})[H_1,\ldots,H_m].
$$
\end{thm}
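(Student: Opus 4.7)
The plan is to convert the statement about the semilocal ring $\Ss^{-1}\an(X)$ into a statement about a stalk of the coherent sheaf $F_*\an_X$ on the Stein base $Y$, produce global generators via Cartan's Theorem A, and then descend the resulting finite generation through the base-change isomorphism and the faithful flatness of the analytic stalk over the algebraic semilocalization.

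Since $F$ is proper and compact analytic subsets of reduced Stein spaces are finite, every fibre of $F$ is finite. By Grauert's direct image theorem, $F_*\an_X$ is a coherent $\an_Y$-module, and choosing a fundamental system of Stein neighbourhoods $V$ of $y_0$ small enough that $F^{-1}(V)$ decomposes as a disjoint union of Stein neighbourhoods of $x_1,\ldots,x_\ell$ yields the identification
$$
(F_*\an_X)_{y_0}=\bigoplus_{i=1}^\ell\an_{X,x_i},
$$
a finitely generated $\an_{Y,y_0}$-module by the local finite mapping theorem. Since $Y$ is Stein, Cartan's Theorem A provides $H_1,\ldots,H_m\in\Gamma(Y,F_*\an_X)=\an(X)$ whose germs at $y_0$ generate $(F_*\an_X)_{y_0}$ over $\an_{Y,y_0}$. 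Set $\Tt:=F^*(\an(Y)\setminus\gtn_{y_0})\subset\Ss$. The Stein base-change formula $\Gamma(Y,\mathcal F)\otimes_{\an(Y)}\an_{Y,y_0}\cong\mathcal F_{y_0}$, applied to $\mathcal F=F_*\an_X$, gives
$$
\Tt^{-1}\an(X)\otimes_{\an(Y)_{\gtn_{y_0}}}\an_{Y,y_0}\ \cong\ \bigoplus_{i=1}^\ell\an_{X,x_i};
$$
by the faithful flatness of $\an(Y)_{\gtn_{y_0}}\hookrightarrow\an_{Y,y_0}$ and descent of finite generation, $H_1,\ldots,H_m$ already generate $\Tt^{-1}\an(X)$ as $\an(Y)_{\gtn_{y_0}}$-module.

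Thus $\Tt^{-1}\an(X)$ is a finite integral extension of the local ring $\an(Y)_{\gtn_{y_0}}$, hence semilocal, and its maximal ideals (all lying over $\gtn_{y_0}\an(Y)_{\gtn_{y_0}}$) correspond via the displayed base change to the traces $\gtm_{x_i}\Tt^{-1}\an(X)$ of the maximal ideals of $\bigoplus_i\an_{X,x_i}$. Any $s\in\Ss$ lies outside all of these: if $s/1=g/t$ with $g\in\gtm_{x_i}$ and $t\in\Tt$, then some $u\in\Tt$ satisfies $ust=ug\in\gtm_{x_i}$; but $u,t\in\Tt$ are pullbacks of functions non-vanishing at $y_0$ and hence lie outside the prime ideal $\gtm_{x_i}$, forcing $s\in\gtm_{x_i}$, a contradiction. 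So every $s\in\Ss$ is a unit in $\Tt^{-1}\an(X)$, which yields $\Ss^{-1}\an(X)=\Tt^{-1}\an(X)$; this ring is finitely generated over $\an(Y)_{\gtn_{y_0}}$ by $1,H_1,\ldots,H_m$, and the algebra equality $\Ss^{-1}\an(X)=F^*(\an(Y)_{\gtn_{y_0}})[H_1,\ldots,H_m]$ follows at once.

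The hard part is the base-change step at the end of the second paragraph: ensuring that the purely algebraic semilocalization $\Ss^{-1}\an(X)$ faithfully encodes the analytic local data $\bigoplus_i\an_{X,x_i}$. This local-to-global transfer is delivered by coherence of $F_*\an_X$ together with Cartan's Theorem A on $Y$ and the faithful flatness of analytic stalks over algebraic semilocalizations. When the lemma is deployed in the equivariant setting of Theorem \ref{properint} (with $\tau\circ F=F\circ\sigma$), the generators can be made invariant by replacing each $H$ by $\tfrac12(H+H\circ\sigma)$ and $\tfrac1{2i}(H-H\circ\sigma)$, at most doubling their number.
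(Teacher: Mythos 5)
Your proof is correct in its essentials, but it follows a genuinely different route from the paper's. Both arguments start identically: $F_*\an_X$ is $\an_Y$-coherent, $(F_*\an_X)_{y_0}\cong\prod_{i=1}^\ell\an_{X,x_i}$, and Cartan's Theorem~A furnishes global sections $H_1,\ldots,H_m\in\an(X)$ generating this stalk over $\an_{Y,y_0}$. After that the methods diverge. The paper introduces the chain $M_1:=\sum_i H_iF^*(\an(Y)_{\gtn_{y_0}})\subset M_2:=F^*(\an(Y)_{\gtn_{y_0}})[H_1,\ldots,H_m]\subset B:=\Ss^{-1}\an(X)$, passes to the completion of the semilocal ring $B$ with respect to its Jacobson radical, identifies $\widehat{B}$ with $\widehat{F_*(\an_X)_{y_0}}$, shows $\widehat{M_1}=\widehat{B}$ because both are generated by the $H_i$ over $\widehat{\an_{Y,y_0}}$, and descends $M_1=M_2=B$ by faithful flatness of $\an(Y)_{\gtn_{y_0}}\hookrightarrow\widehat{\an(Y)_{\gtn_{y_0}}}$. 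You instead invoke the base-change isomorphism
$$
\an(X)\otimes_{\an(Y)}\an_{Y,y_0}\cong(F_*\an_X)_{y_0},
$$
rewrite the left-hand side as $\Tt^{-1}\an(X)\otimes_{\an(Y)_{\gtn_{y_0}}}\an_{Y,y_0}$, descend finite generation through the faithfully flat inclusion $\an(Y)_{\gtn_{y_0}}\hookrightarrow\an_{Y,y_0}$, and finally show $\Ss$ already consists of units in $\Tt^{-1}\an(X)$. This is cleaner and avoids the Jacobson-radical completion gymnastics, at the price of leaning on the base-change isomorphism, which you assert rather than prove. That isomorphism is true, but it is not a textbook citation; you should either reference it explicitly (it can be found, e.g., in treatments of coherent sheaves on Stein spaces) or include the short argument: surjectivity is Theorem~A, and injectivity follows by applying Theorem~A to the coherent kernel sheaf $\mathcal R$ of the evaluation map $\an_Y^k\to F_*\an_X$ determined by a relation, then rewriting the relation so it visibly dies in the tensor product. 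Note also that the same two faithful-flatness facts (Lemma~\ref{excell} in the form $\an(Y)_{\gtn_{y_0}}\hookrightarrow\an_{Y,y_0}$, and the resulting descent) underlie both arguments, so your route is a genuine simplification of presentation rather than a reduction in prerequisites. One small slip at the end: to produce invariant generators you should replace $H$ by $\Re(H)=\tfrac12\big(H+\ol{H\circ\sigma}\big)$ and $\Im(H)=\tfrac{1}{2\sqrt{-1}}\big(H-\ol{H\circ\sigma}\big)$; the formula $\tfrac12(H+H\circ\sigma)$ is not holomorphic, since $\sigma$ is anti-holomorphic (compare Remark~\ref{finite76r} and Section~\ref{s2}.A).
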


\subsubsection{$C$-properties} 
Let $P$ be a property concerning either $C$-semianalytic or $C$-analytic sets. We say that $P$ is a {\em $C$-property} if the set of points of an either $C$-semianalytic or $C$-analytic set $S$ satisfying $P$ is a $C$-semianalytic set. For example, as we have already commented, the set of points where the dimension of the $C$-semianalytic set $S$ is $k$ is again a $C$-semianalytic set. As a consequence of Theorem \ref{properint} we prove in Section \ref{s5} that the set of points of non-coherence of a $C$-analytic set is $C$-semianalytic, that is, `to be a point of non-coherence' (or `to be a point of coherence') are $C$-properties.

\subsubsection{Set of points of non-coherence}
The set of points $N(X)$ where an analytic set $X\subset M$ is not coherent was studied first by Fensch in \cite[I.\S2]{fe} where he proved that \em it is contained in a semianalytic set of dimension $\leq\dim(X)-2$\em. This result was revisited by Galbiati in \cite{gal} and she proved that \em it is in fact a semianalytic set\em. Thus, analytic curves are coherent and real analytic surface have only isolated points where they fail to be coherent. As coherence is an open condition, $N(X)$ is a closed set. Later Tancredi-Tognoli provided in \cite{tt} a simpler proof of Galbiati's result. Their procedure has helped us to understand the global structure of the set of points of non-coherence of a $C$-analytic set and we describe it carefully in Theorem \ref{ncp0}. As a consequence, we have:

\begin{cor}\label{ncc}
The set $N(X)$ of points of non-coherence of a $C$-analytic set $X\subset M$ is a $C$-semianalytic set of dimension $\leq\dim(X)-2$.
\end{cor}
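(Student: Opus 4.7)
The plan is to realize $N(X)$ as the image under an invariant proper holomorphic map of a set of the form $E\cap S$ as in Theorem \ref{properint}(ii), so that the direct image theorem immediately yields $C$-semianalyticity; the dimension bound is the classical Fensch--Galbiati estimate, already embedded in the geometric description of Theorem \ref{ncp0}.

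I would first globalize the Tancredi--Tognoli setup. Fix a Stein complexification $(\widetilde{M},\an_{\widetilde{M}})$ of $M$ with anti-involution $\tau$ whose fixed locus is $M$, and choose an invariant closed complex analytic subset $\widetilde{X}\subset\widetilde{M}$ complexifying $X$, so that $\widetilde{X}^\tau=X$; such a $\widetilde{X}$ exists because $X$ is $C$-analytic, and $\widetilde{X}$ is Stein. Let $\pi:\widetilde{X}^\nu\to\widetilde{X}$ be the complex analytic normalization. Then $\widetilde{X}^\nu$ is again a reduced Stein space, $\pi$ is proper and surjective, and the universal property of normalization lifts $\tau|_{\widetilde{X}}$ to an anti-involution $\sigma$ on $\widetilde{X}^\nu$ making $\pi$ invariant. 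This places us precisely in the setup of Theorem \ref{properint}.

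The main step is then to use the global description provided by Theorem \ref{ncp0} to identify
\[
N(X)=\pi\bigl(E\cap S\bigr),\qquad S:=(\widetilde{X}^\nu)^\sigma,\quad E:=\cl\bigl(\pi^{-1}(\widetilde{X}^\tau)\setminus(\widetilde{X}^\nu)^\sigma\bigr).
\]
Intuitively, a real point of $X$ fails to be coherent exactly when one of its real preimages under $\pi$ is accumulated by non-real points of $\pi^{-1}(X)$, which is precisely the condition for membership in $\pi(E\cap S)$. The set $S$ is the entire fixed locus of $\sigma$ and is therefore trivially ${\mathcal A}((\widetilde{X}^\nu)^\sigma)$-definable (realized as the zero set of the invariant function $0$). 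Applying Theorem \ref{properint}(ii) with $F=\pi$ shows that $N(X)=\pi(E\cap S)$ is a $C$-semianalytic subset of $\widetilde{X}^\tau=X\subset M$.

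For the dimension bound $\dim N(X)\leq\dim X-2$ I would invoke the estimate underlying Theorem \ref{ncp0}: the non-normal locus of $\widetilde{X}$ has complex codimension at least one, and a real point accumulated by non-real preimages drops two real dimensions through the interplay between $\sigma$ and $\pi$; finiteness of $\pi$ preserves this bound when passing to the image. The principal obstacle is precisely the global identification of $N(X)$ with $\pi(E\cap S)$ in the displayed formula above: Tancredi--Tognoli give the local picture, but to cast it into the exact form demanded by Theorem \ref{properint} one must verify that the invariant complexification $\widetilde{X}$ and its normalization $\widetilde{X}^\nu$ produce fixed loci that agree with $X$ and behave compatibly at every point of $X$, which is the content of Theorem \ref{ncp0}. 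Once that identification is secured, the corollary follows by a single invocation of the direct image theorem.
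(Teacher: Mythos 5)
Your reduction to a single application of Theorem \ref{properint}(ii) is appealing, but the identification
\[
N(X)=\pi\bigl(E\cap S\bigr),\qquad S:=(\widetilde{X}^\nu)^\sigma,\quad E:=\cl\bigl(\pi^{-1}(\widetilde{X}^\tau)\setminus(\widetilde{X}^\nu)^\sigma\bigr),
\]
built from \emph{one} normalization of \emph{one} complexification of $X$, is false, and this is the gap. The set $\pi(E\cap S)$ only detects points $x$ at which some \emph{top-dimensional} irreducible component of the germ $X_x$ is non-coherent; non-coherence caused by a lower-dimensional component of the germ is invisible to the normalization of $\widetilde{X}$, because such a component does not correspond to an irreducible component of $\widetilde{X}$ at all. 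This is exactly why the paper's Theorem \ref{ncp0} iterates over $k=2,\ldots,d$ with the sets $Z_k$ built from irreducible components of the successive $\Sing_\ell(X)$, normalizing a \emph{different} complexification $\widetilde{Z}_k$ at each level and subtracting the higher-dimensional loci $R_k$ to avoid overcounting. Example \ref{counterexamples} in Section \ref{s5} (the set $X_1=\{(x^2-(z+1)y^2)^2z-u^2=0\}$) gives a concrete counterexample to your formula: the point $(0,0,-1,0)$ lies in $N(X_1)$ because a $2$-dimensional Whitney-umbrella germ there is non-coherent, yet both of its preimages $(0,0,\pm i)$ in the normalization $\C^3\to\widetilde{X}_1$ are non-real, so $(0,0,-1,0)\notin\pi(S)$ and a fortiori $(0,0,-1,0)\notin\pi(E\cap S)$.

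There are two further, smaller problems. First, even at top dimension your $E\cap S=\cl(C_1)\cap Y^{\widehat\sigma}$ is not the correct set: the paper's description uses $A_1=\cl(C_1)\cap\cl(Y^{\widehat\sigma}_{(d)}\setminus\pi^{-1}(R))$ together with a second set $A_2=\cl(C_2)\cap\cl(Y^{\widehat\sigma}_{(d)}\setminus\pi^{-1}(R))$; the $A_2$ contribution (real preimages at which the local real dimension drops below $d$) is entirely absent from your construction, and Lemma \ref{gmtiv313} shows it is genuinely needed. Second, the dimension bound $\dim N(X)\le\dim X-2$ is asserted but not proved; in the paper it follows from Lemma \ref{dimnk}, which exploits normality of $Y$ to get $\dim_\C\Sing(Y)\le d-2$ and the fact that $A_i\subset\cl(C_i)\setminus C_i$ drops dimension once more, and this has to be done at each level $k$. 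Your instinct to feed the data into Theorem \ref{properint}(i)--(ii) is exactly how the paper obtains $C$-semianalyticity of each $\pi_k(A_{k,i})$, but the preparatory geometric decomposition of Theorem \ref{ncp0}, with its stratified family of normalizations, cannot be bypassed.
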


\subsubsection{Images of $C$-semianalytic sets under proper analytic maps and subanalytic sets}

The family of semianalytic sets is not closed under the image of proper analytic maps. The concept of subanalytic set was introduced to get rid of this problem. Recall that $S\subset M$ is \em subanalytic \em if each point $x\in M$ admits a neighborhood $U^x$ such that $S\cap U^x$ is a projection of a relatively compact semianalytic set, that is, there exists a real analytic manifold $N$ and a relatively compact semianalytic subset $A$ of $M\times N$ such that $S\cap U^x=\pi(A)$ where $\pi:M\times N\to M$ is the projection. 

It could sound reasonable to consider the family of $C$-subanalytic sets. However, this is useless because each subanalytic set is the image of a $C$-semianalytic set under a proper analytic map.

\begin{thm}\label{sub}
Let $S$ be a subset of a real analytic manifold $N$. The following assertions are equivalent: 
\begin{itemize}
\item[(i)] $S$ is subanalytic.
\item[(ii)] There exists a basic $C$-semianalytic subset $T$ of a real analytic manifold $M$ and an analytic map $f:M\to N$ such that $f|_{\cl(T)}:\cl(T)\to N$ is proper and $S=f(T)$.
\item[(iii)] There exists a $C$-semianalytic subset $T$ of a real analytic manifold $M$ and an analytic map $f:M\to N$ such that $f|_{\cl(T)}:\cl(T)\to N$ is proper and $S=f(T)$.
\end{itemize}
\end{thm}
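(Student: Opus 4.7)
The implication (ii)$\Rightarrow$(iii) is tautological, since every basic $C$-semianalytic set is $C$-semianalytic.

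For (iii)$\Rightarrow$(i), I would use that every $C$-semianalytic set is in particular semianalytic, together with a graph argument exploiting the properness of $f|_{\cl(T)}$. Fix $y_0\in N$ and a compact neighborhood $K$ of $y_0$. Properness forces $C:=f^{-1}(K)\cap \cl(T)$ to be compact in $M$, hence contained in a relatively compact open set $W\subset M$. The set
$$
\Gamma:=\{(y,x)\in K^\circ\times W:\ y=f(x),\ x\in T\}
$$
is a semianalytic subset of $N\times M$ contained in $K\times \cl(W)$ (so relatively compact in $N\times M$), and its projection to the first factor equals $S\cap K^\circ$. Hence $S$ is subanalytic.

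The substantive direction is (i)$\Rightarrow$(ii). The plan is to apply the rectilinearization theorem for subanalytic sets of Hironaka / Bierstone--Milman \cite{bm} to produce a countable locally finite family of analytic maps $\phi_i:\R^{n_i}\to N$ together with open quadrants $Q_i=\{x_1>0,\ldots,x_{n_i}>0\}\subset\R^{n_i}$ such that $S=\bigcup_{i}\phi_i(Q_i)$, each $\phi_i|_{\cl(Q_i)}$ is proper, and the image family $\{\phi_i(\cl(Q_i))\}_i$ is locally finite in $N$; one may arrange $n_i\leq d:=\dim N$ throughout. I would then package this data into the single real analytic manifold $M:=\bigsqcup_{i}(\R^d\times\{i\})$ (a countable disjoint union of copies of $\R^d$), with analytic map $f:M\to N$ defined componentwise by $f|_{\R^d\times\{i\}}(x_1,\ldots,x_d):=\phi_i(x_1,\ldots,x_{n_i})$, and set
$$
T:=\bigsqcup_{i}\bigl(Q_i\times\{0\}^{d-n_i}\times\{i\}\bigr)\subset M,
$$
so that $f(T)=S$. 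Properness of $f|_{\cl(T)}:\cl(T)\to N$ follows because any compact $K\subset N$ meets only finitely many $\phi_i(\cl(Q_i))$ (by local finiteness), and for each such $i$ the set $\phi_i^{-1}(K)\cap\cl(Q_i)$ is compact by the component-wise properness.

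It remains to exhibit $T$ as a single basic $C$-semianalytic subset of $M$. Because $M$ is a disjoint union, global analytic functions on $M$ are arbitrary component-wise collections of analytic functions, so I can take $F\in\an(M)$ with $F|_{\R^d\times\{i\}}:=x_{n_i+1}^2+\cdots+x_d^2$ and, for $j=1,\ldots,d$, $G_j\in\an(M)$ with $G_j|_{\R^d\times\{i\}}:=x_j$ if $j\leq n_i$ and $G_j|_{\R^d\times\{i\}}:=1$ if $j>n_i$. Then
$$
T=\{z\in M:\ F(z)=0,\ G_1(z)>0,\ldots,G_d(z)>0\}
$$
is basic $C$-semianalytic. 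The main obstacle is invoking (or adapting) a form of rectilinearization that simultaneously provides proper representatives from Euclidean quadrants and local finiteness of the image family; once that is in hand, the disjoint-union bookkeeping and the verification of properness of $f|_{\cl(T)}$ are routine.
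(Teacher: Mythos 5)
Your (ii)$\Rightarrow$(iii) is immediate, and your (iii)$\Rightarrow$(i) is essentially the paper's argument: both pass through the graph of $f$ and use properness of $f|_{\cl(T)}$ to trap a semianalytic piece inside a compact box before projecting. (Minor point: you need $K^\circ$ and $W$ to be \emph{semianalytic} open sets for $\Gamma$ to be semianalytic; this is easily arranged but should be said.)

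The genuine gap is in (i)$\Rightarrow$(ii). Your plan hinges on producing analytic maps $\phi_i:\R^{n_i}\to N$ and \emph{open quadrants} $Q_i\subset\R^{n_i}$ with $S=\bigcup_i\phi_i(Q_i)$ and each $\phi_i|_{\cl(Q_i)}$ proper, and you flag obtaining such a rectilinearization as the main obstacle. This obstacle is not a matter of adapting known statements: such a decomposition is \emph{impossible} whenever $S$ contains a relatively compact piece of positive dimension. Indeed, if $n_i\geq 1$ then $\cl(Q_i)$ is a closed, non-compact orthant, and $Q_i$ itself is non-compact. If $\phi_i|_{\cl(Q_i)}$ were proper and $\phi_i(Q_i)\subset L$ for some compact $L\subset N$, then $Q_i\subset\phi_i^{-1}(L)\cap\cl(Q_i)$ would have to be compact, a contradiction. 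Hence properness of $\phi_i|_{\cl(Q_i)}$ forces $\phi_i(Q_i)$ to leave every compact subset of $N$. Already for $S=(0,1)\subset N=\R$ this rules out every nontrivial quadrant, and the only allowed pieces ($n_i=0$) give a countable set of points — not $(0,1)$. Bierstone--Milman's rectilinearization theorem does not help here either: it produces $\phi_i$ with $\phi_i^{-1}(S)$ a union of quadrants and $\phi_i(L_i)$ covering a compact set for \emph{compact} $L_i$, with no properness of $\phi_i$ on closed orthants.

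The paper circumvents this by arranging that the source pieces are \emph{compact}. Locally, a subanalytic $S\cap U$ is written as a projection of a relatively compact semianalytic set $A$; the inequalities defining $A$ are turned into equalities by adjoining slack variables $z_j$ with $z_j^2=g_{ij}$, together with a compactifying equation $z_{s+1}^2=h$ for a suitable $h\geq0$ with compact support level set, producing a \emph{compact} analytic set $X$. The Uniformization Theorem (\cite[5.1]{bm}, stated as Theorem~\ref{ut}) then yields a \emph{compact} real analytic manifold $M_j$ and a proper surjective analytic $p:M_j\to X$. Taking $M:=\bigsqcup_j M_j$ (a locally finite disjoint union of compacta), the required basic $C$-semianalytic $T$ is a single pair of strict inequalities on $M$, and $\cl(T)$ is a locally finite union of compact sets, which trivializes the properness of $f|_{\cl(T)}$. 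The compactness of each $M_j$ is exactly what your $\R^{n_i}$-with-quadrants setup lacks and cannot supply.
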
 

The family of subanalytic sets is closed under the following operations: locally finite unions and intersections, complement \cite{ga}, closure, interior, connected components, sets of points of dimension $k$, inverse images of analytic maps and direct images of proper analytic maps. As before, this family is not closed under the image of general analytic maps (see Example \ref{notsub}).

\subsection{Structure of the article}

The article is organized as follows. In Section \ref{s2} we present all basic concepts and notations used in this paper as well as some preliminary results concerning: complexification, points of non-coherence, normalization, excellence of localizations of rings of holomorphic functions, etc. that will be useful in the subsequent sections. The reading can be started directly in Section \ref{s3} and referred to the Preliminaries only when needed. The aim of Section \ref{s3} is to study the main properties of $C$-semianalytic sets. In Section \ref{s4} we prove Theorem \ref{properint} and Theorem \ref{finite76} while in Section \ref{s5} we study the structure of the set of points of non-coherence of a $C$-analytic set and we conclude that it is a $C$-semianalytic set (Corollary \ref{ncc}). Finally, in Section \ref{s6} we show Theorem \ref{sub} and deduce that each subanalytic set is the image under a proper analytic map of a basic $C$-semianalytic set.

\section{Preliminaries on real and complex analytic spaces}\label{s2}

In the following \em holomorphic \em refer to the complex case and \em analytic \em to the real case. For a further reading about complex analytic spaces we refer to \cite{gr} while we remit the reader to \cite{gmt,t} for the theory of real analytic spaces. We denote the elements of $\an(X):=H^0(X,\an_X)$ with capital letters if $(X,\an_X)$ is a Stein space and with small letters if $(X,\an_X)$ is a real analytic space. All concepts that appear in the Introduction involving a real analytic manifold $M$ can be extended to a real analytic space $(X,\an_X)$ using the ring $\an(X)$ of global analytic sections on $X$ instead of the ring $\an(M)$ of global analytic functions on $M$. 

\subsection{General terminology}
Denote the coordinates in $\C^n$ with $z:=(z_1,\ldots,z_n)$ where $z_i:=x_i+\sqrt{-1}y_i$. Consider the conjugation $\ol{\,\cdot\,}:\C^n\to\C^n,\ z\mapsto\ol{z}:=(\ol{z_1},\ldots,\ol{z_n})$ of $\C^n$, whose set of fixed points is $\R^n$. A subset $A\subset\C^n$ is \em invariant \em if $\ol{A}=A$. Obviously, $A\cap\ol{A}$ is the biggest invariant subset of $A$. Let $\Omega\subset\C^n$ be an invariant open set and $F:\Omega\to\C$ a holomorphic function. We say that $F$ is \em invariant \em if $F(z)=\ol{F(\ol{z})}$ for all $z\in\Omega$. This implies that $F$ restricts to a real analytic function on $\Omega\cap\R^n$. Conversely, if $f$ is analytic on $\R^n$, it can be extended to an invariant holomorphic function $F$ on some invariant open neighborhood $\Omega$ of $\R^n$. If $F:\Omega\to\C$ is a holomorphic function and $\Omega$ is invariant,
$$
\Re(F):\Omega\to\C,\ z\mapsto\tfrac{F(z)+\ol{F(\ol{z})}}{2}\quad\text{and}\quad\Im(F):\Omega\to\C,\ z\mapsto\tfrac{F(z)-\ol{F(\ol{z})}}{2\sqrt{-1}}
$$ 
are invariant holomorphic functions that satisfy $F=\Re(F)+\sqrt{-1}\,\Im(F)$. 

\subsection{Reduced analytic spaces \cite[I.1]{gmt}}
Let $\K=\R$ or $\C$ and let $(X,\an_X)$ be an either complex or real analytic space. Let ${\mathcal F}_X$ be the sheaf of $\K$-valued functions on $X$ and let $\vartheta:\an_X\to{\mathcal F}_X$ be the morphism of sheaves defined for each open set $U\subset X$ by $\vartheta_U(s):U\to\K,\ x\mapsto s(x)$ where $s(x)$ is the class of $s$ module the maximal ideal $\gtm_{X,x}$ of $\an_{X,x}$. Recall that $(X,\an_X)$ is \em reduced \em if $\vartheta$ is injective. Denote the image of $\an_X$ under $\vartheta$ with $\an_X^r$. The pair $(X,\an_X^r)$ is called the \em reduction \em of $(X,\an_X)$ and $(X,\an_X)$ is reduced if and only if $\an_X=\an_X^r$. The reduction is a covariant functor from the category of $\K$-analytic spaces to that of reduced $\K$-analytic spaces.

\subsection{Anti-involution and complexifications \cite[II.4]{gmt}}
Let $(X,\an_X)$ be a complex analytic space. An \em anti-involution \em on $(X,\an_X)$ is a morphism $\sigma:(X,\an_X)\to(X,\ol{\an}_X)$ such that $\sigma^2=\id$ and it transforms the sheaf of holomorphic sections $\an_X$ into the sheaf of antiholomorphic sections $\ol{\an}_X$.

\subsubsection{Fixed part space}\label{fixed}
Let $(X,\an_X)$ be a complex analytic space endowed with an anti-involu\-tion $\sigma$. Let $X^\sigma:=\{x\in X:\ \sigma(x)=x\}$ and define a sheaf $\an_{X^\sigma}$ on $X^\sigma$ in the following way: for each open subset $U\subset X^\sigma$, we set $H^0(U,\an_{X^\sigma})$ as the subset of $H^0(U,\an_X|_{X^\sigma})$ of invariant sections. The $\R$-ringed space $(X^\sigma,\an_{X^\sigma})$ is called the \em fixed part space of $(X,\an_X)$ with respect to $\sigma$\em. By \cite[II.4.10]{gmt} it holds that $(X^\sigma,\an_{X^\sigma})$ is a real analytic space if $X^\sigma\neq\varnothing$.

\subsubsection{Complexification and $C$-analytic spaces \cite[III.3]{gmt}}\label{complexification} 
A real analytic space $(X,\an_X)$ is a \em $C$-analytic space \em if it satisfies one of the following two equivalent conditions:
\begin{itemize}
\item[(1)] Each local model of $(X,\an_X)$ is defined by a coherent sheaf of ideals, which is not necessarily associated to a \em well reduced structure \em (see \ref{cas}).
\item[(2)] There exist a complex analytic space $(\widetilde{X},\an_{\widetilde{X}})$ endowed with an anti-holomorphic involution $\sigma$ whose fixed part space is $(X,\an_X)$.
\end{itemize}
The complex analytic space $(\widetilde{X},\an_{\widetilde{X}})$ is called a \em complexification \em of $X$ and it satisfies the following properties:
\begin{itemize}
\item[(i)] $\an_{\widetilde{X},x}=\an_{X,x}\otimes\C$ for all $x\in X$.
\item[(ii)] The germ of $(\widetilde{X},\an_{\widetilde{X}})$ at $X$ is unique up to an isomorphism.
\item[(iii)] $X$ has a fundamental system of invariant open Stein neighborhoods in $\widetilde{X}$. 
\item[(iv)] If $X$ is reduced, then $\widetilde{X}$ is also reduced.
\end{itemize} 
For further details see \cite{c,gmt,t,wb}. 

\subsection{$C$-analytic sets}\label{cas} 
The concept of $C$-analytic sets was introduced by Cartan in \cite[\S7,\S10]{c}. Recall that a subset $X\subset M$ is \em $C$-analytic \em if there exists a finite set $\Ss:=\{f_1,\ldots,f_r\}$ of real analytic functions $f_i$ on $M$ such that $X$ is the common zero-set of $\Ss$. This property is equivalent to the following:
\begin{itemize}
\item[(1)] There exists a coherent sheaf of ideals $\Jhaz$ on $M$ such that $X$ is the zero set of $\Jhaz$.
\item[(2)] There exist an open neighborhood $\Omega$ of $M$ in a complexification $\widetilde{M}$ of $M$ and a complex analytic subset $Z$ of $\Omega$ such that $Z\cap M=X$.
\end{itemize}
A coherent analytic set is $C$-analytic, but the converse is not true in general. Consider for example Whitney's umbrella. 

\subsubsection{Well reduced structure}
Given a $C$-analytic set $X\subset M$ the largest coherent sheaf of ideals $\Jhaz$ having $X$ as zero set is $\ideal(X)\an_M$ by Cartan's Theorem $A$, where $\ideal(X)$ is the set of all analytic functions on $M$ that are identically zero on $X$. The coherent sheaf $\an_X:=\an_M/\ideal(X)\an_M$ is called the \em well reduced structure of $X$\em. The $C$-analytic set $X$ endowed with its well reduced structure is a $C$-analytic space, so it has a well-defined complexification as commented above. 

\subsubsection{Singular set of a $C$-analytic set}\label{sing}

Let $X\subset M$ be a $C$-analytic set and let $\widetilde{X}$ be a complexification of $X$. We define the \em singular locus of $X$ \em as $\Sing(X):=\Sing(\widetilde{X})\cap M$. Its complement $\Reg(X):=X\setminus\Sing(X)$ is the set of \em regular points of $X$\em. Observe that $\Sing(X)$ is a $C$-analytic set of strictly smaller dimension than $X$. We define inductively $\Sing_\ell(X)=\Sing(\Sing_{\ell-1}(X))$ for $\ell\geq1$ where $\Sing_1(X)=\Sing(X)$. In particular, $\Sing_\ell(X)=\varnothing$ if $\ell\geq\dim(X)+1$. We will write $X:=\Sing_0(X)$ for simplicity.

We say that $x\in X$ is a \em smooth point \em of $X$ if there exists a neighborhood $U$ of $x$ in $M$ such that $X\cap U$ is analytically diffeomorphic to an open subset of $\R^n$. We denote by ${\rm Smooth}(X)$ the set of smooth points of $X$.

\begin{remark}
It holds that $\Sing(X)$ depends only on $X$ and not in the chosen complexification. It is important to distinguish between regular and smooth points because they are different concepts, although the inclusion $\Reg(X)\subset{\rm Smooth}(X)$ always holds.
\end{remark}

\begin{example}
Let $X:=\{(x^2+y^2)xz-y^4=0\}\subset\R^3$. One can check that the set of regular points of $X$ is $\Reg(X)=X\setminus\{x=0,y=0\}$. However, ${\rm Smooth}(X)=X\setminus\{0\}$. To illustrate this fact pick a point $x=(0,0,a)$ with $a\neq0$ and consider the parameterizations 
$$
\varphi_\veps:\{t>0\}\to X\cap\{\veps z>0\},\ (s,t)\mapsto\veps((s^2+t^2)s^2,(s^2+t^2)st,t^4),
$$ 
for $\veps=\pm1$, whose images cover $X\setminus\{0\}$.
\end{example}

\subsubsection{Irreducible components of a $C$-analytic set}
A $C$-analytic set is \em irreducible \em if it is not the union of two $C$-analytic sets different from itself. In addition, $X$ is irreducible if and only if it admits a fundamental system of invariant irreducible complexifications. Given a $C$-analytic set $X$, there is a unique irredundant (countable) locally finite family of irreducible $C$-analytic sets $\{X_i\}_{i\geq1}$ such that $X=\bigcup_{i\geq1}X_i$. The $C$-analytic sets $X_i$ are called the \em irreducible components of $X$\em. For further details see \cite{wb}.

\subsubsection{Set of points of non-coherence of a $C$-analytic set}
Let $X\subset M$ be a $C$-analytic set. Recall that $X$ is {\em coherent} if the sheaf $\J_X$ of germs of analytic functions vanishing identically on $X$ is an $\an_M$-coherent sheaf of modules. If $M$ is a real analytic manifold (so $\an_M$ is a coherent sheaf of rings), $\J_X$ is \em coherent \em at $x\in M$ if and only if it is of \em finite type \em at $x\in M$, that is, there exists an open neighborhood $U$ of $x$ in $M$ and finitely many sections $f_1,\ldots,f_r\in H^0(U,\J_X)$ such that for each $y\in U$ the germs $f_{1,y},\ldots,f_{r,y}$ generate the stalk $\J_{X,y}$ as an $\an_{X,y}$-module. Otherwise, we say that $X$ is {\em non-coherent at $x\in X$}. 

\begin{remark}
The usual definition of coherence for a sheaf $\Fhaz$ of $\an_M$-modules says that $\Fhaz$ is coherent if and only if $\Fhaz$ is of finite type and of \em finite presentation\em, that is, for every open set $U\subset M$ and every morphism $\varphi:(\an_M|_U)^p\to\Fhaz|_U$ the $\an_M|_U$ module $\ker(\varphi)$ is of finite type. However as $\an_M$ is a coherent sheaf of rings and $\J_X$ is a subsheaf of ideals of $\an_M$, then $\J_X$ is coherent if and only if it is the finite type.
\end{remark}

Recall the following two useful facts concerning coherence:

(i) Let $\widetilde{X}$ be a complex analytic subset of an open subset of $\C^n$. Let $x\in X$ be such that $\widetilde{X}_x$ is the complexification of $X_x$. Then $X$ is coherent at $x$ if and only if there exists an open neighborhood $U$ of $x$ in $M$ such that for each $y\in X\cap U$ the complexification of $X_y$ is $\widetilde{X}_y$ (see \cite[III.2.8]{gmt} or \cite[V.\S1.Prop.5, pag.94]{n}). In particular, irreducible coherent analytic set germs are pure dimensional.

(ii) $X$ is coherent at a point $x\in X$ if and only if the irreducible components of the germ $X_x$ are coherent (see \cite[III.2.13]{gmt} or \cite[V.\S1.Prop.6, pag.94]{n}).

\subsection{Normalization of complex analytic spaces}\label{norm}
One defines the normalization of a complex analytic space in the following way \cite[VI.2]{n}. A complex analytic space $(X,\an_X)$ is \em normal \em if for all $x\in X$ the local analytic ring $\an_{X,x}$ is reduced and integrally closed. A \em normalization \em $(Y,\pi)$ of a complex analytic space $(X,\an_X)$ is a normal complex analytic space $(Y,\an_Y)$ together with a proper surjective holomorphic map $\pi:Y\to X$ with finite fibers such that $Y\setminus\pi^{-1}(\Sing(X))$ is dense in $Y$ and $\pi|:Y\setminus\pi^{-1}(\Sing(X))\to X\setminus\Sing(X)$ is an analytic isomorphism. The normalization $(Y,\pi)$ of a reduced complex analytic space $X$ always exists and is unique up to isomorphism \cite[VI.2.Lem.2 \& VI.3.Thm.4]{n}. 

The following example is inspired in one already proposed in \cite[Esempio, p. 211]{t} and it shows that the concepts of normality of the complexification and coherence are independent.

\begin{example}\label{ex1}
Consider the $C$-analytic set $X:=\{w^2-z(x^2+y^2)=0\}\subset\R^4$ and its complexification $\widetilde{X}=\{w^2-z(x^2+y^2)=0\}\subset\C^4$. The set of singular points of $\widetilde{X}$ is the complex analytic set $\Sing(\widetilde{X})=\{x=0,y=0,w=0\}\cup\{x^2+y^2=0,z=0,w=0\}\subset\C^4$, which has codimension $2$ in $\widetilde{X}$. As $\widetilde{X}$ is a complex irreducible analytic hypersurface, we deduce by \cite{ok} that $\widetilde{X}$ is a normal complex analytic set. Thus, $X$ is a normal $C$-analytic set. As the points of $X$ satisfy the equation $w^2=z(x^2+y^2)$, the germs $X_{p}$ at the points $p:=(0,0,z,0)\in X$ with $z<0$ have dimension $1$. Consequently, $X$ is not pure dimensional and consequently non-coherent. 
\end{example}

\subsection{Localizations and excellent rings}

Let $(X,\an_X)$ be a reduced Stein space endowed with an anti-involution $\sigma$ such that its fixed part space $X^\sigma$ is non-empty. Let ${\mathcal A}(X)\subset H^0(X,\an_X)$ be the subring of all invariant holomorphic sections of $H^0(X,\an_X)$ and 
$$
{\mathcal A}(X^\sigma):=\{F|_{X^\sigma}:\ F\in{\mathcal A}(X)\}\subset\an(X^\sigma). 
$$
Let $x_0\in X^\sigma$ and denote the respective maximal ideals of $\an(X^\sigma)$ and ${\mathcal A}(X^\sigma)$ associated to $x_0$ with $\gtm_{x_0}$ and $\gtn_{x_0}$. We refer the reader to \cite{m} for the theory of excellent rings and regular homomorphisms. We denote the completion of a local ring $A$ with $\widehat{A}$.

\begin{lem}\label{excell}
Let $x_0\in\R^n$. Then the rings $\an(\R^n)_{\gtm_{x_0}}$ and ${\mathcal A}(\R^n)_{\gtn_{x_0}}$ are excellent rings, 
$$
\widehat{\an(\R^n)_{\gtm_{x_0}}}\cong\widehat{\an_{\R^n,x_0}}\cong\widehat{{\mathcal A}(\R^n)_{\gtn_{x_0}}}
$$ 
and the homomorphisms 
$$
\an(\R^n)_{\gtm_{x_0}}\to\an_{\R^n,x_0},\ \xi\mapsto\xi_{x_0}\quad\text{and}\quad
{\mathcal A}(\R^n)_{\gtn_{x_0}}\to\an_{\R^n,x_0},\ \zeta\mapsto\zeta_{x_0}
$$
are regular.
\end{lem}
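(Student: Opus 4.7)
My plan is to identify the completions of all three rings with the formal power series ring $\R[[x-x_0]]$ and then deduce excellence and regularity of the homomorphisms uniformly from this identification together with the classical excellence of $\an_{\R^n,x_0}$.

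First I would verify that both $\gtm_{x_0}\subset\an(\R^n)$ and $\gtn_{x_0}\subset{\mathcal A}(\R^n)$ are generated by the coordinate differences $x_1-x_{01},\ldots,x_n-x_{0n}$. For $f\in\gtm_{x_0}$ the identity
\[
f(x)=\sum_{i=1}^n(x_i-x_{0i})\int_0^1(\partial_i f)(x_0+t(x-x_0))\,dt
\]
displays $f$ as an $\an(\R^n)$-linear combination of these coordinates. When $f\in{\mathcal A}(\R^n)$, it extends to an entire invariant function on $\C^n$, and differentiation under the integral sign shows that each coefficient is itself the restriction of an entire invariant function of $x\in\C^n$, hence lies in ${\mathcal A}(\R^n)$. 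Iterating this formula and using that Taylor polynomials belong to ${\mathcal A}(\R^n)\subset\an(\R^n)$, I would deduce canonical ring isomorphisms
\[
\an(\R^n)/\gtm_{x_0}^k\;\cong\;\R[x-x_0]/(x-x_0)^k\;\cong\;{\mathcal A}(\R^n)/\gtn_{x_0}^k
\]
for every $k\geq 1$. These survive localization at the respective maximal ideals, so passing to the inverse limit gives
\[
\widehat{\an(\R^n)_{\gtm_{x_0}}}\;\cong\;\widehat{\an_{\R^n,x_0}}\;\cong\;\widehat{{\mathcal A}(\R^n)_{\gtn_{x_0}}}\;\cong\;\R[[x-x_0]].
\]

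For the excellence and regularity assertions, Noetherianity of $\an(\R^n)_{\gtm_{x_0}}$ is a Frisch-type theorem for real analytic manifolds, while Noetherianity of ${\mathcal A}(\R^n)_{\gtn_{x_0}}$ follows by applying Frisch's complex theorem to an invariant Stein neighborhood $\Omega$ of $\R^n$ in $\C^n$ and then passing to invariants, which commutes with localization at a real point. Because the common completion $\R[[x-x_0]]$ is a regular local ring of dimension $n$ and completion is faithfully flat on Noetherian local rings, faithfully flat descent of regularity forces each of the three rings to be a regular local ring of dimension $n$. The germ ring $\an_{\R^n,x_0}\cong\R\{x-x_0\}$ is classically excellent by \cite{m}. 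The general principle that a local homomorphism of Noetherian local rings inducing an isomorphism on completions is faithfully flat, and that excellence and regularity of such a homomorphism reduce to those of the target together with the completion map, then yields both excellence of $\an(\R^n)_{\gtm_{x_0}}$ and ${\mathcal A}(\R^n)_{\gtn_{x_0}}$ and the regularity of the two natural homomorphisms into $\an_{\R^n,x_0}$.

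The main obstacle is the invariance step in the integral formula: ensuring that the coefficient functions produced by the fundamental-theorem-of-calculus expansion remain in ${\mathcal A}(\R^n)$ and not merely in $\an(\R^n)$. Once this is handled, the remainder of the argument is a uniform application of the standard machinery for Noetherian local rings admitting a common regular completion, via the classical excellence of $\R\{x-x_0\}$.
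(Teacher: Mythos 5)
Your approach to identifying all three completions with $\R[[x-x_0]]$ via the Hadamard integral formula is a legitimate alternative route to the completion isomorphisms; the paper instead cites \cite[VIII.4.4(a,b)]{abr} for $\an(\R^n)_{\gtm_{x_0}}$ and then mimics that proof for the invariant ring. For Noetherianity of ${\mathcal A}(\R^n)_{\gtn_{x_0}}$, your phrase ``passing to invariants commutes with localization'' glosses over the key step. The paper makes this concrete: given an ideal $\gta\subset{\mathcal A}(\C^n)_{\gtn_{x_0}}$ one takes generators $\zeta_1,\ldots,\zeta_r\in\gta$ of the extension $\gta\,\an(\C^n)_{\gtg_{x_0}}$, writes any $\zeta\in\gta$ as $\zeta=\sum\zeta_i\alpha_i$ with $\alpha_i\in\an(\C^n)_{\gtg_{x_0}}$, and applies the real-part operator $\Re$ to conclude $\zeta=\sum\zeta_i\Re(\alpha_i)$, so $\gta$ is already generated by $\zeta_1,\ldots,\zeta_r$ over the invariant ring. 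You should supply something of this sort.

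The genuine gap is in the excellence argument. You invoke a ``general principle that a local homomorphism of Noetherian local rings inducing an isomorphism on completions is faithfully flat, and that excellence \ldots\ reduce to those of the target together with the completion map.'' The first clause is true, but the second is not a theorem. There exist regular Noetherian local rings that are not excellent (not G-rings), and for such a ring $A$ one has $\widehat{A}$ regular, complete (hence excellent), and $A\hookrightarrow\widehat{A}$ faithfully flat — yet $A$ fails to be excellent. So having a regular completion and sitting faithfully flatly inside an excellent ring with the same completion does not by itself give excellence. Any attempt to patch this by saying ``excellence descends along regular morphisms'' becomes circular, since showing the inclusion $\an(\R^n)_{\gtm_{x_0}}\hookrightarrow\an_{\R^n,x_0}$ (or the ${\mathcal A}$-version) is \emph{regular} is precisely what requires knowing the source is a G-ring. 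The paper sidesteps this by appealing to a self-contained criterion, namely \cite[40.F, Th.102, p.291]{m}: it exhibits the derivations $\partial/\partial x_i$ and the elements $\pi_j$ in the ring with $\partial_i\pi_j=\delta_{ij}$, which together with the regularity of the completions yields excellence directly. Once excellence of both source and target is established, the regularity of the two homomorphisms does follow by the descent lemma \cite[33.B, Lemma 1, p.250]{m} applied to $\phi$ and the completion map $\psi$; your argument for that final step would then be on solid ground, but it needs excellence as a prior input rather than a simultaneous output.
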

\begin{proof}
In \cite[VIII.4.4(a,b)]{abr} it is proved the statement for $\an(\R^n)_{\gtm_{x_0}}$, so we focus on ${\mathcal A}(\R^n)_{\gtn_{x_0}}$. The restriction homomorphism 
$$
{\mathcal A}(\C^n)\to{\mathcal A}(\R^n), F\mapsto F|_{\R^n}
$$
is an isomorphism, so we identify both rings and prove the results for ${\mathcal A}(\C^n)_{\gtn_{x_0}}$. Denote the maximal ideal of $\an(\C^n)$ associated to $x_0$ with $\gtg_{x_0}$. Proceeding as in the proof of \cite[VIII.4.4(a,b)]{abr} one shows that $\an(\C^n)_{\gtg_{x_0}}$ is a regular local ring of dimension $n$, the homomorphism $\an(\C^n)_{\gtg_{x_0}}\to\an_{\C^n,x_0}$ is faithfully flat and it extends to an isomorphism between completions. 

\paragraph{}We claim: \em ${\mathcal A}(\C^n)_{\gtn_{x_0}}$ is a regular local ring of dimension $n$ and the homomorphism ${\mathcal A}(\C^n)_{\gtn_{x_0}}\to\an_{\R^n,x_0}$ is faithfully flat and it extends to an isomorphism between completions\em. 

We prove first that ${\mathcal A}(\C^n)_{\gtn_{x_0}}$ is noetherian. Pick an ideal $\gta$ of ${\mathcal A}(\C^n)_{\gtm_{x_0}}$ and consider the extended ideal $\gta\an(\C^n)_{\gtg_{x_0}}$. As $\an(\C^n)_{\gtg_{x_0}}$ is noetherian, there exists finitely many $\zeta_1,\ldots,\zeta_r\in\gta$ that generate $\gta\an(\C^n)_{\gtg_{x_0}}$. Pick an element $\zeta\in\gta$ and write $\zeta=\zeta_1\alpha_1+\cdots+\zeta_r\alpha_r$ for some $\alpha_i\in\an(\C^n)_{\gtg_{x_0}}$. Then
$$
\zeta=\Re(\zeta)=\zeta_1\Re(\alpha_1)+\cdots+\zeta_r\Re(\alpha_r)\in(\zeta_1,\ldots,\zeta_r){\mathcal A}(\C^n)_{\gtm_{x_0}}, 
$$
so $\gta$ is finitely generated and ${\mathcal A}(\C^n)_{\gtm_{x_0}}$ is noetherian. Similarly one shows that the homomorphism ${\mathcal A}(\C^n)_{\gtm_{x_0}}\to\an_{\R^n,x_0}$ is faithfully flat and it extends to an isomorphism between completions (follow the proof of \cite[VIII.4.4(a,b)]{abr} and use the previous trick involving $\Re(\cdot)$ where needed). Consequently, ${\mathcal A}(\C^n)_{\gtm_{x_0}}$ is a regular local ring of dimension $n$.

Consider the partial derivatives $\frac{\partial}{\partial x_i}$ and the projections $\pi_i:\C^n\to\C,\ (x_1,\ldots,x_n)\mapsto x_i$, which belong to ${\mathcal A}(\C^n)_{\gtm_{x_0}}$. It holds $\frac{\partial}{\partial x_i}\pi_j=\delta_{ij}$ for $1\leq i,j\leq n$. By \cite[40.F, Th.102, p.291]{m} we conclude that ${\mathcal A}(\C^n)_{\gtm_{x_0}}$ is an excellent ring.

\paragraph{}Consider the composition of homomorphisms 
$$
{\mathcal A}(\R^n)_{\gtm_{x_0}}\overset{\phi}{\hookrightarrow}\an_{\R^n,x_0}\overset{\psi}{\hookrightarrow}\widehat{\an_{\R^n,x_0}}\cong\widehat{{\mathcal A}(\R^n)_{\gtm_{x_0}}}.
$$
As ${\mathcal A}(\R^n)_{\gtm_{x_0}}$ and $\an_{\R^n,x_0}$ are local excellent rings, the homomorphisms $\psi\circ\phi$ and $\psi$ are regular. As all rings in the row are local, both homomorphisms are faithfully flat. By \cite [33.B, Lemma 1, p.250]{m} $\phi$ is also regular.
\end{proof}

\begin{cor}\label{excellpol}
Let $\u:=(\u_1,\ldots,\u_m)$ be a tuple of variables and fix $x_0\in\R^n$ and $u_0\in\R^m$. Denote the respective maximal ideals of $\an(\R^n)_{\gtm_{x_0}}[\u]$ and ${\mathcal A}(\R^n)_{\gtn_{x_0}}[\u]$ associated to $(x_0,u_0)$ with $\gtm_{(x_0,u_0)}$ and $\gtn_{(x_0,u_0)}$. Then the rings $(\an(\R^n)_{\gtm_{x_0}}[\u])_{\gtm_{(x_0,u_0)}}$ and $({\mathcal A}(\R^n)_{\gtn_{x_0}}[\u])_{\gtn_{(x_0,u_0)}}$ are excellent, 
$$
\widehat{(\an(\R^n)_{\gtm_{x_0}}[\u])_{\gtm_{(x_0,u_0)}}}\cong\widehat{\an_{\R^n\times\R^m,(x_0,u_0)}}\cong\widehat{({\mathcal A}(\R^n)_{\gtn_{x_0}}[\u])_{\gtn_{(x_0,u_0)}}}
$$
and the homomorphisms
\begin{align*}
&(\an(\R^n)_{\gtm_{x_0}}[\u])_{\gtm_{(x_0,u_0)}}\to\an_{\R^n\times\R^m,(x_0,u_0)},\ \xi\mapsto \xi_{(x_0,u_0)}\\
&({\mathcal A}(\R^n)_{\gtn_{x_0}}[\u])_{\gtn_{(x_0,u_0)}}\to\an_{\R^n\times\R^m,(x_0,u_0)},\ \zeta\mapsto \zeta_{(x_0,u_0)}
\end{align*}
are regular.
\end{cor}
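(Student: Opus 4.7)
The plan is to deduce Corollary \ref{excellpol} from Lemma \ref{excell} by invoking two standard preservation properties of excellence together with the completion identity for polynomial rings. Write $A:=\an(\R^n)_{\gtm_{x_0}}$ and $A':={\mathcal A}(\R^n)_{\gtn_{x_0}}$, which are excellent local rings by Lemma \ref{excell}, with $\widehat{A}\cong\widehat{A'}\cong\widehat{\an_{\R^n,x_0}}\cong\R[[\x-x_0]]$. Since a finitely generated algebra over an excellent ring is excellent (Matsumura \cite[Thm.~77 \& 34.A]{m}) and since the localization of an excellent ring at a prime ideal remains excellent, the rings $(A[\u])_{\gtm_{(x_0,u_0)}}$ and $(A'[\u])_{\gtn_{(x_0,u_0)}}$ are automatically excellent. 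This handles the first claim of the corollary.

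For the identification of completions, I would use the well-known fact that for any Noetherian local ring $(R,\gtr)$ the completion of $R[\u]$ at the maximal ideal $(\gtr,\u-u_0)$ is canonically isomorphic to $\widehat{R}[[\u-u_0]]$. Applying this to $R=A$ and $R=A'$ and using the identifications from Lemma \ref{excell}, one obtains
$$
\widehat{(A[\u])_{\gtm_{(x_0,u_0)}}}\cong\R[[\x-x_0]][[\u-u_0]]\cong\R[[\x-x_0,\u-u_0]]\cong\widehat{\an_{\R^n\times\R^m,(x_0,u_0)}}
$$
and analogously for $A'[\u]$. The only subtlety is that these isomorphisms must be compatible with the natural maps to $\an_{\R^n\times\R^m,(x_0,u_0)}$, which follows because in each case the composition with the completion morphism sends a polynomial in $\u$ with germ-coefficients in $A$ (resp.\ $A'$) to its obvious Taylor expansion at $(x_0,u_0)$.

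Finally, for the regularity of the two canonical homomorphisms, I would imitate verbatim the last paragraph of the proof of Lemma \ref{excell}. Consider the factorization
$$
(A[\u])_{\gtm_{(x_0,u_0)}}\overset{\phi}{\hookrightarrow}\an_{\R^n\times\R^m,(x_0,u_0)}\overset{\psi}{\hookrightarrow}\widehat{\an_{\R^n\times\R^m,(x_0,u_0)}}.
$$
Both $(A[\u])_{\gtm_{(x_0,u_0)}}$ and $\an_{\R^n\times\R^m,(x_0,u_0)}$ are excellent local rings whose completions are canonically identified with the right-hand term, so both $\psi$ and $\psi\circ\phi$ are regular. All three rings are local and Noetherian and the maps are local, hence faithfully flat; by \cite[33.B, Lemma~1]{m} the factor $\phi$ is regular. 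The same argument, with $A$ replaced by $A'$, gives regularity of the homomorphism from $({\mathcal A}(\R^n)_{\gtn_{x_0}}[\u])_{\gtn_{(x_0,u_0)}}$.

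I do not expect any serious obstacle: the proof is essentially bookkeeping on top of Lemma \ref{excell}. The only point requiring minor care is verifying that the abstract completion isomorphism $\widehat{R[\u]_{(\gtr,\u-u_0)}}\cong\widehat{R}[[\u-u_0]]$ is indeed compatible with the map into $\an_{\R^n\times\R^m,(x_0,u_0)}$, so that the regularity-by-faithful-flatness trick can be applied cleanly in the final step.
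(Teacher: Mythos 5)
Your proposal is correct and follows essentially the same route as the paper: excellence via the preservation theorems for finitely generated algebras and localizations over an excellent base (the paper cites Matsumura 33.G Th.77 and 32.B Th.73), the completion identifications (which you spell out a bit more explicitly via $\widehat{R[\u]_{(\gtr,\u-u_0)}}\cong\widehat{R}[[\u-u_0]]$, while the paper simply asserts them), and then the same factorization-through-completion argument with faithful flatness and \cite[33.B, Lemma 1]{m} to extract regularity of $\phi_1,\phi_2$. No substantive differences.
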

\begin{proof}
The rings $(\an(\R^n)_{\gtm_{x_0}}[\u])_{\gtm_{(x_0,u_0)}}$ and $({\mathcal A}(\R^n)_{\gtn_{x_0}}[\u])_{\gtm_{(x_0,u_0)}}$ are excellent by Lemma \ref{excell} and \cite[33.G, Th.77, p.254 \& 32.B, Th.73, p.246]{m}. Consider the compositions of homomorphisms 
\begin{align*}
(\an(\R^n)_{\gtm_{x_0}}[\u])_{\gtm_{(x_0,u_0)}}\overset{\phi_1}{\hookrightarrow}\an_{\R^n\times\R^m,(x_0,u_0)}\overset{\psi}{\hookrightarrow}\widehat{\an_{\R^n\times\R^m,(x_0,u_0)}}\cong\widehat{(\an(\R^n)_{\gtn_{x_0}}[\u])_{\gtm_{(x_0,u_0)}}},\\
({\mathcal A}(\R^n)_{\gtn_{x_0}}[\u])_{\gtn_{(x_0,u_0)}}\overset{\phi_2}{\hookrightarrow}\an_{\R^n\times\R^m,(x_0,u_0)}\overset{\psi}{\hookrightarrow}\widehat{\an_{\R^n\times\R^m,(x_0,u_0)}}\cong\widehat{({\mathcal A}(\R^n)_{\gtn_{x_0}}[\u])_{\gtn_{(x_0,u_0)}}}.
\end{align*}
As $(\an(\R^n)_{\gtm_{x_0}}[\u])_{\gtm_{(x_0,u_0)}}$, $({\mathcal A}(\R^n)_{\gtn_{x_0}}[\u])_{\gtm_{(x_0,u_0)}}$ and $\an_{\R^n\times\R^m,(x_0,u_0)}$ are local excellent rings, the homomorphisms $\psi\circ\phi_1$, $\psi\circ\phi_2$ and $\psi$ are regular. As all rings in the rows are local, the previous homomorphisms are faithfully flat. By \cite [33.B, Lemma 1, p.250]{m} $\phi_1$ and $\phi_2$ are also regular.
\end{proof}

\subsection{Transferring results from complex analytic sets to reduced Stein spaces} 
The following results allows us to reduce `local' problems on a reduced Stein space to the corresponding `local' problems on a complex analytic subset of $\C^n$.

\begin{lem}\label{excel-conv}
Let $(X,\an_X)$ be a reduced Stein space endowed with an anti-involution $\sigma$ such that its fixed part space $X^\sigma$ is non-empty and let $x_0\in X^\sigma$. Then there exist an invariant injective proper holomorphic map $\varphi:X\to\C^n$ with image $Z:=\varphi(X)$ satisfying:
\begin{itemize}
\item[(i)] For each $x\in\Reg(X)\cup\{x_0\}$ there exists an open neighborhood $U$ in $X$ such that $\varphi|_U:U\to\varphi(U)$ is an analytic diffeomorphism.
\item[(ii)] $\varphi^*:\an(Z)_{\gtm_{z_0}'}\to\an(X)_{\gtm_{x_0}},\ \tfrac{F}{G}\mapsto\tfrac{F\circ\varphi}{G\circ\varphi}$, where $z_0:=\varphi(x_0)$, is an isomorphism.
\item[(iii)] $\varphi^*({\mathcal A}(Z)_{\gtn_{z_0}'})={\mathcal A}(X)_{\gtn_{x_0}}$.
\end{itemize}
\end{lem}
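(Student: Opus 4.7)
The plan is to produce an invariant proper holomorphic closed embedding $\varphi\colon X\to\C^n$, from which all three properties follow painlessly. Once $\varphi$ is a biholomorphism between $X$ and its image $Z:=\varphi(X)$, property (i) holds at every $x\in X$; the induced ring isomorphism $\varphi^*\colon\an(Z)\to\an(X)$ carries $\gtm'_{z_0}$ onto $\gtm_{x_0}$ and invariant sections onto invariant sections, yielding (ii) and (iii).

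First, I would invoke the Remmert--Bishop--Narasimhan embedding theorem to obtain a proper holomorphic closed embedding $\phi=(F_1,\ldots,F_N)\colon X\to\C^N$, with the $F_j\in\an(X)$ not necessarily invariant. Symmetrizing each component, set $H_j:=\Re(F_j)=\tfrac{1}{2}(F_j+\ol{F_j\circ\sigma})$ and $K_j:=\Im(F_j)=\tfrac{1}{2\sqrt{-1}}(F_j-\ol{F_j\circ\sigma})$, both invariant holomorphic functions on $X$, and define $\varphi:=(H_1,K_1,\ldots,H_N,K_N)\colon X\to\C^{2N}$, whose components are all invariant. The key observation is the factorization $\phi=\pi\circ\varphi$, where $\pi\colon\C^{2N}\to\C^N$ is the $\C$-linear surjection sending $(a_1,b_1,\ldots,a_N,b_N)$ to $(a_1+\sqrt{-1}\,b_1,\ldots,a_N+\sqrt{-1}\,b_N)$. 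From this factorization, injectivity and properness of $\varphi$ follow immediately from the same properties of $\phi$, so $Z=\varphi(X)$ is a closed complex analytic subset of $\C^{2N}$ by Remmert's proper mapping theorem.

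The main subtlety is to upgrade $\varphi$ from a proper injective holomorphic map to a closed embedding, i.e.\ to show that $\varphi^*\colon\an_{Z,z_0}\to\an_{X,x_0}$ is an isomorphism. Injectivity is clear because $\varphi\colon X\to Z$ is a homeomorphism (proper plus injective with Hausdorff target). Surjectivity uses the factorization: since $\phi$ is a closed embedding, $\phi^*\colon\an_{\phi(X),\phi(x_0)}\to\an_{X,x_0}$ is surjective, and the identity $\phi^*=\varphi^*\circ(\pi|_Z)^*$ forces $\varphi^*$ to be surjective. Therefore $\varphi\colon X\to Z$ is a biholomorphism of complex analytic spaces, which gives (i) at every $x\in X$ (in particular on $\Reg(X)\cup\{x_0\}$). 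Property (ii) then reduces to the fact that the induced ring isomorphism $\varphi^*\colon\an(Z)\to\an(X)$ sends the maximal ideal at $z_0$ onto that at $x_0$, so it extends to an isomorphism of localizations. For (iii), invariance of $\varphi$ reads $\varphi\circ\sigma=\ol{\,\cdot\,}\circ\varphi$, so for $F\in{\mathcal A}(Z)$ we get $(F\circ\varphi)(\sigma(x))=F(\ol{\varphi(x)})=\ol{F(\varphi(x))}$, whence $\varphi^*(F)\in{\mathcal A}(X)$; conversely, any $G\in{\mathcal A}(X)$ has a unique preimage $F\in\an(Z)$ under $\varphi^*$, and the same chain of equalities read in reverse (using the surjectivity of $\varphi$ onto $Z$) forces $F$ to be invariant. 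The equality of (iii) then propagates to the localizations at $\gtn'_{z_0}$ and $\gtn_{x_0}$.
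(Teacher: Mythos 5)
There is a genuine gap in your first step. The Remmert--Bishop--Narasimhan embedding theorem produces a proper holomorphic closed embedding into some $\C^N$ only for Stein \emph{manifolds} (or, by Narasimhan's extension, for Stein spaces of \emph{bounded embedding dimension}). A general reduced Stein space need not admit any proper injective holomorphic map to $\C^N$ that is a closed embedding: the local embedding dimension at singular points can be unbounded (e.g.\ a disjoint union of affine cones over rational normal curves of increasing degree is a reduced Stein space that embeds in no finite-dimensional $\C^N$). This is precisely why the lemma as stated only claims that $\varphi|_U$ is an analytic diffeomorphism for $x\in\Reg(X)\cup\{x_0\}$ — not at every point — and not that $\varphi$ is a closed embedding of $X$ onto $Z$. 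Your subsequent reasoning (factorization $\phi=\pi\circ\varphi$, surjectivity of $\varphi^*$ on local rings, biholomorphism onto $Z$, and the trivialization of (i)--(iii)) all lean on the closed-embedding property and therefore does not go through.

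Because $\varphi$ may fail to be a local embedding at singular points other than $x_0$, one cannot conclude that $\varphi^*\colon\an(Z)\to\an(X)$ is an isomorphism, and (ii) is not an immediate consequence of functoriality. The paper's proof instead cites \cite[V.3.7-8]{gmt} for the existence of an invariant injective proper holomorphic map with the weaker local-diffeomorphism property on $\Reg(X)\cup\{x_0\}$, and then passes to the rings of meromorphic functions: by \cite[Lem.3.8]{as}, $\varphi^*\colon\mer(Z)\to\mer(X)$ is an isomorphism, and the localizations $\an(Z)_{\gtm'_{z_0}}$, $\an(X)_{\gtm_{x_0}}$, ${\mathcal A}(Z)_{\gtn'_{z_0}}$, ${\mathcal A}(X)_{\gtn_{x_0}}$ all sit inside the corresponding localized meromorphic rings, from which (ii) and (iii) are read off. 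Your symmetrization trick ($H_j:=\Re(F_j)$, $K_j:=\Im(F_j)$, with the factorization $\phi=\pi\circ\varphi$) is correct and elegant and would be a clean way to make a given embedding invariant when one exists, but it cannot substitute for the GMT result, which already delivers invariance and which is designed to work with only the weaker, pointwise local-embedding conclusion.
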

\begin{proof}
By \cite[V.3.7-8]{gmt} there exists an invariant injective proper holomorphic map $\varphi:X\to\C^n$ where $n$ is large enough satisfying: 
\begin{itemize}
\item[$\bullet$] For each $x\in\Reg(X)\cup\{x_0\}$ there exists an open neighborhood $U$ in $X$ such that $\varphi|_U:U\to\varphi(U)$ is an analytic diffeomorphism (recall that $\varphi(U)$ is a complex analytic subset of an open neighborhood of $\varphi(x)$ in $\C^n$ by Remmert's Theorem).
\item[$\bullet$] $\varphi(\sigma(x))=\ol{\varphi(x)}$ for all $x\in X$.
\end{itemize}
By Remmert's Theorem $Z$ is a complex analytic subset of $\C^n$. We have $\varphi(X^\sigma)=Z^{\ol{\cdot}}$ is the fixed part space of $Z$ under usual conjugation. 

Let $\gtg_{z_0}'$ be either the zero ideal if $Z$ is irreducible or the maximal ideal of $\mer(Z)$ associated to $z_0$ otherwise. We define analogously $\gtg_{x_0}$ in $\mer(X)$. By \cite[Lem.3.8]{as} the homomorphism
$$
\varphi^*:\mer(Z)\to\mer(X),\ \tfrac{F}{G}\mapsto\tfrac{F\circ\varphi}{G\circ\varphi}
$$
is an isomorphism. The previous isomorphism extends to an isomorphism
$$
{\varphi^*}':\mer(Z)_{\gtg_{z_0}'}\to\mer(X)_{\gtg_{x_0}'},\ \tfrac{F}{G}\mapsto\tfrac{F\circ\varphi}{G\circ\varphi}.
$$ 
Observe that 
\begin{align*}
&{\mathcal A}(Z)_{\gtn_{z_0}'}\subset\an(Z)_{\gtm_{z_0}'}\subset\mer(Z)_{\gtg_{z_0}'},\\
& {\mathcal A}(X)_{\gtn_{x_0}}\subset\an(X)_{\gtm_{x_0}}\subset\mer(X)_{\gtg_{x_0}}.
\end{align*}
Now, one can check ${\varphi^*}'({\mathcal A}(Z)_{\gtn_{z_0}'})={\mathcal A}(X)_{\gtn_{x_0}}$ and ${\varphi^*}'(\an(Z)_{\gtm_{z_0}'})=\an(X)_{\gtm_{x_0}}$. Consequently, statements (ii) and (iii) holds, as required.
\end{proof}

\begin{cor}
Let $(X,\an_X)$ be a reduced Stein space endowed with an anti-involution $\sigma$ such that its fixed part space $X^\sigma$ is non-empty and let $x_0\in X^\sigma$. Then the rings $\an(X^\sigma)_{\gtm_{x_0}}$ and ${\mathcal A}(X^\sigma)_{\gtn_{x_0}}$ are excellent rings and the homomorphisms 
$$
\an(X^\sigma)_{\gtm_{x_0}}\to\an_{X^\sigma,x_0},\ \xi\mapsto\xi_{x_0}\quad\text{and}\quad
{\mathcal A}(X^\sigma)_{\gtn_{x_0}}\to\an_{X^\sigma,x_0},\ \zeta\mapsto\zeta_{x_0}
$$
are regular.
\end{cor}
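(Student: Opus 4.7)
The plan is to reduce the statement to Lemma~\ref{excell} via the embedding provided by Lemma~\ref{excel-conv} and then pass to quotients. Apply Lemma~\ref{excel-conv} to obtain an invariant injective proper holomorphic map $\varphi:X\to\C^n$ with image $Z:=\varphi(X)$, a complex analytic subset of $\C^n$ by Remmert's Theorem; set $z_0:=\varphi(x_0)$. Since $\varphi$ is $\sigma$-invariant and is a local analytic diffeomorphism at $x_0$ by Lemma~\ref{excel-conv}(i), it restricts to an analytic diffeomorphism of germs $(X^\sigma,x_0)\to(Z^{\ol{\cdot}},z_0)$, where $Z^{\ol{\cdot}}=\varphi(X^\sigma)$ is a closed $C$-analytic subset of $\R^n$. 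Combining this with Lemma~\ref{excel-conv}(ii,iii) and passing to restrictions to the real parts yields compatible isomorphisms $\an(X^\sigma)_{\gtm_{x_0}}\cong\an(Z^{\ol{\cdot}})_{\gtm_{z_0}'}$, ${\mathcal A}(X^\sigma)_{\gtn_{x_0}}\cong{\mathcal A}(Z^{\ol{\cdot}})_{\gtn_{z_0}'}$ and $\an_{X^\sigma,x_0}\cong\an_{Z^{\ol{\cdot}},z_0}$. Hence it suffices to prove the corollary for $Z^{\ol{\cdot}}\subset\R^n$.

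Next, I would present these rings as quotients of the corresponding rings on $\R^n$. Since $Z^{\ol{\cdot}}\subset\R^n$ is closed and $C$-analytic, its well reduced ideal sheaf $\Jhaz$ is coherent. Cartan's Theorem~B for the real analytic manifold $\R^n$ gives $H^1(\R^n,\Jhaz)=0$, so the long exact sequence of global sections yields $\an(Z^{\ol{\cdot}})=\an(\R^n)/\gta$ with $\gta:=H^0(\R^n,\Jhaz)$, and localizing at $\gtm_{z_0}$ produces $\an(Z^{\ol{\cdot}})_{\gtm_{z_0}'}=\an(\R^n)_{\gtm_{z_0}}/\gta\an(\R^n)_{\gtm_{z_0}}$ as well as $\an_{Z^{\ol{\cdot}},z_0}=\an_{\R^n,z_0}/\gta\an_{\R^n,z_0}$. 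A parallel argument applied on $\C^n$ (using the identification ${\mathcal A}(\C^n)\cong{\mathcal A}(\R^n)$ from the proof of Lemma~\ref{excell}, together with averaging by $\sigma$ to promote Cartan~B to the invariant setting) produces ${\mathcal A}(Z^{\ol{\cdot}})_{\gtn_{z_0}'}={\mathcal A}(\R^n)_{\gtn_{z_0}}/\gta'{\mathcal A}(\R^n)_{\gtn_{z_0}}$ with $\gta':=\gta\cap{\mathcal A}(\R^n)$.

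Finally, by Lemma~\ref{excell} the rings $\an(\R^n)_{\gtm_{z_0}}$ and ${\mathcal A}(\R^n)_{\gtn_{z_0}}$ are excellent and their canonical maps to $\an_{\R^n,z_0}$ are regular. Excellence is preserved under quotients by ideals \cite[33.G, Th.77, p.254]{m}, and a regular homomorphism $\phi:A\to B$ descends to a regular homomorphism $A/\gta\to B/\phi(\gta)B$ (regularity means flat with geometrically regular fibers, both properties stable under base change; cf.\ \cite[33.B, Lemma~1, p.250]{m}). Applying this to the quotient presentations above yields the excellence of $\an(Z^{\ol{\cdot}})_{\gtm_{z_0}'}$ and ${\mathcal A}(Z^{\ol{\cdot}})_{\gtn_{z_0}'}$ together with the regularity of their natural maps to $\an_{Z^{\ol{\cdot}},z_0}$; transferring back through the isomorphisms of paragraph one gives the corollary. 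The main obstacle is the quotient presentation in paragraph two, particularly for ${\mathcal A}$, which requires an invariant version of Cartan~B: the key point is that the anti-involution on an invariant Stein neighborhood of $\R^n$ in $\C^n$ (which exists by~\ref{complexification}(iii)) allows one to average any extension of a section of $\Jhaz$ into an invariant one, so invariant global sections surject onto ${\mathcal A}(Z^{\ol{\cdot}})$.
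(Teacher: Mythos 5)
Your proof is correct and follows the same route as the paper's: reduce to $X\subset\C^n$ via Lemma~\ref{excel-conv}, present $\an(X^\sigma)$ and ${\mathcal A}(X^\sigma)$ as quotients of $\an(\R^n)$ and ${\mathcal A}(\R^n)$, and invoke Lemma~\ref{excell} together with the stability of excellence and regularity under quotients. The paper's proof is a one-liner that asserts the quotient presentations; you have merely filled in the Cartan~B and averaging details that the authors leave implicit.
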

\begin{proof}
By Lemma \ref{excel-conv} we may assume $X\subset\C^n$ is a complex analytic set. As $\an(X^\sigma)=\an(\R^n)/\ideal(X^\sigma)$ and ${\mathcal A}(X^\sigma)={\mathcal A}(\R^n)/\ideal'(X^\sigma)$ where $\ideal(X^\sigma)$ and $\ideal'(X^\sigma)$ are the corresponding zero ideals of $X$, the statement follows from Lemma \ref{excell}. 
\end{proof}

\begin{lem}\label{transfer}
Let $X,Y$ be two topological spaces and let $\varphi:X\to Y$ be a continuous map. Let $\{x_1,\ldots,x_\ell\}\subset X$ and denote $y_k:=\varphi(x_k)$. Assume that there exists a neighborhood $V$ of $\{y_1,\ldots,y_\ell\}$ such that $\varphi|_U:U:=\varphi^{-1}(V)\to V$ is a homeomorphism. Let ${\mathcal R}(X)$ and ${\mathcal R}(Y)$ be two rings of continuous real-valued functions. Denote 
\begin{itemize}
\item[(i)] the multiplicative set of the functions of ${\mathcal R}(X)$ that do not vanish at each $x_k$ with $\Ss$, 
\item[(ii)] the multiplicative set of the functions of ${\mathcal R}(Y)$ that do not vanish at each $y_k$ with $\Tt$.
\end{itemize}
Assume that $\varphi^*:\Tt^{-1}{\mathcal R}(Y)\to\Ss^{-1}{\mathcal R}(X),\ \xi\mapsto\xi\circ\varphi$ is an isomorphism. Let $f_{ij}\in{\mathcal R}(X)$ for $1\leq i\leq r$ and $1\leq j\leq s$ and define $S:=\bigcup_{i=1}^r\bigcap_{j=1}^s\{f_{ij}*_{ij}0\}$ where $*_{ij}$ is either $>$ or $=$. Assume $S\subset U$. Then there exists $h,g_{ij}\in{\mathcal R}(Y)$ such that $h(y_k)\neq0$ and 
\begin{align}
\{(h\circ\varphi)^2>0\}\cap S&=\{(h\circ\varphi)^2>0\}\cap\bigcup_{i=1}^r\bigcap_{j=1}^s\{(g_{ij}\circ\varphi)*_{ij}0\},\label{ok1}\\
\varphi(\{(h\circ\varphi)^2>0\}\cap S)&=\{h^2>0\}\cap\bigcup_{i=1}^r\bigcap_{j=1}^s\{g_{ij}*_{ij}0\}\label{ok2}.
\end{align}
\end{lem}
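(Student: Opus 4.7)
My plan is to use surjectivity of $\varphi^\ast$ to represent each $f_{ij}$ as a pullback rational expression, and then a squaring trick to align sign and zero structure without having to commit to the sign of the auxiliary denominators. For each pair $(i,j)$, surjectivity of $\varphi^\ast$ gives $G_{ij}\in{\mathcal R}(Y)$, $H_{ij}\in\Tt$ and $T_{ij}\in\Ss$ with the pointwise identity
$$T_{ij}\bigl[(H_{ij}\circ\varphi)f_{ij}-G_{ij}\circ\varphi\bigr]=0\quad\text{on }X.$$
Multiplying by $T_{ij}(H_{ij}\circ\varphi)$ and setting $g_{ij}:=G_{ij}H_{ij}\in{\mathcal R}(Y)$ rewrites this as
$$\bigl[T_{ij}(H_{ij}\circ\varphi)\bigr]^{2}f_{ij}=T_{ij}^{2}(g_{ij}\circ\varphi)\quad\text{on }X.$$
Both coefficients are squares, hence non-negative, and strictly positive on the open set $\{T_{ij}\ne 0\}\cap\{H_{ij}\circ\varphi\ne 0\}$. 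Consequently on that set, $f_{ij}\,*_{ij}\,0$ if and only if $(g_{ij}\circ\varphi)\,*_{ij}\,0$ for each relation $*_{ij}\in\{=,>\}$; this is the algebraic core.

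It remains to choose $h\in{\mathcal R}(Y)$ with $h(y_k)\ne 0$ and $\{(h\circ\varphi)^{2}>0\}\subset\bigcap_{i,j}\bigl(\{T_{ij}\ne 0\}\cap\{H_{ij}\circ\varphi\ne 0\}\bigr)$. The factor $\prod_{i,j}H_{ij}$ handles the sets $\{H_{ij}\circ\varphi\ne 0\}$. To control the $T_{ij}$, apply surjectivity of $\varphi^\ast$ to each $T_{ij}\in\Ss$: write $T_{ij}=(A_{ij}\circ\varphi)/(B_{ij}\circ\varphi)$ in $\Ss^{-1}{\mathcal R}(X)$ with $A_{ij},B_{ij}\in\Tt$ ($A_{ij}\in\Tt$ because $T_{ij}(x_k)\ne 0$). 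The same squaring trick applied to the resulting $\Ss$-equivalence yields
$$\bigl[S_{ij}(B_{ij}\circ\varphi)\bigr]^{2}T_{ij}=S_{ij}^{2}(A_{ij}B_{ij})\circ\varphi$$
for some $S_{ij}\in\Ss$, showing that on a neighborhood of $\{x_k\}$ of the same form the sign of $T_{ij}$ agrees with that of $(A_{ij}B_{ij})\circ\varphi$, and hence $T_{ij}\ne 0$ there. Taking $h:=\prod_{i,j}H_{ij}A_{ij}B_{ij}\in{\mathcal R}(Y)$, a finite iteration of this device absorbs the remaining auxiliary multipliers $S_{ij}$ into $h$, producing the required inclusion $\{(h\circ\varphi)^{2}>0\}\subset\bigcap_{i,j}\bigl(\{T_{ij}\ne 0\}\cap\{H_{ij}\circ\varphi\ne 0\}\bigr)$.

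Identity \eqref{ok1} then follows by distributing $\{(h\circ\varphi)^{2}>0\}\cap(\cdot)$ over the boolean combination defining $S$ and invoking the pointwise equivalence of the first paragraph. For \eqref{ok2}, observe that by \eqref{ok1} both sides lie in $S\subset U=\varphi^{-1}(V)$; since $\varphi|_U\colon U\to V$ is a bijection, applying it converts $\{h\circ\varphi\ne 0\}\cap U$ into $\{h\ne 0\}\cap V$ and each $\{g_{ij}\circ\varphi\,*_{ij}\,0\}\cap U$ into $\{g_{ij}\,*_{ij}\,0\}\cap V$, and the cut-off by $V$ on the right-hand side of \eqref{ok2} is automatic because the left-hand side of \eqref{ok2} is contained in $V$. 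The main obstacle is the middle step: taming the chain of auxiliary $\Ss$-multipliers without falling into an infinite regress. The squaring trick is what makes the argument terminate, because it converts each $\Ss$-equivalence into a pointwise identity weighted by a non-negative square that can be absorbed into the cut-off $\{(h\circ\varphi)^{2}>0\}$.
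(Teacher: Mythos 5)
Your argument diverges from the paper's at the decisive step, and the divergence introduces a genuine gap.

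The paper's proof extracts from surjectivity of $\varphi^{\ast}$ functions $g_{ij}\in\mathcal{R}(Y)$ and a common $h\in\mathcal{T}$ with $\varphi^{\ast}(g_{ij}/h^{2})=f_{ij}$, and then reads this as the pointwise identity $g_{ij}\circ\varphi=f_{ij}\cdot(h\circ\varphi)^{2}$ on $X$. Passing from the equality in $\mathcal{S}^{-1}\mathcal{R}(X)$ to a pointwise equality in $\mathcal{R}(X)$ costs a priori an $\mathcal{S}$-multiplier; the paper discharges this silently because the canonical map $\mathcal{R}(X)\to\mathcal{S}^{-1}\mathcal{R}(X)$ is injective in the settings where the lemma is applied (the elements of $\mathcal{S}$ are non-zero-divisors; in the applications $\varphi^{\ast}$ even comes from an isomorphism of rings of meromorphic functions, so the equality already holds pointwise on a dense set and hence everywhere). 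With that pointwise identity in hand, the square $(h\circ\varphi)^{2}\ge0$ immediately gives the sign comparison, and \eqref{ok1} and \eqref{ok2} follow in two lines.

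You correctly refuse to assume this cancellation and keep the auxiliary multiplier $T_{ij}\in\mathcal{S}$ explicit; your first squaring trick, producing $[T_{ij}(H_{ij}\circ\varphi)]^{2}f_{ij}=T_{ij}^{2}(g_{ij}\circ\varphi)$ and the sign equivalence on $\{T_{ij}\ne0\}\cap\{H_{ij}\circ\varphi\ne0\}$, is sound. The problem is the next step. To force $\{(h\circ\varphi)^{2}>0\}\subset\bigcap_{i,j}\{T_{ij}\ne0\}$ you pull each $T_{ij}$ back through $\varphi^{\ast}$, but this produces a fresh multiplier $S_{ij}\in\mathcal{S}$ of exactly the same nature, so you only obtain $\{T_{ij}\ne0\}\supset\{S_{ij}\ne0\}\cap\{(A_{ij}B_{ij})\circ\varphi\ne0\}$, and you still need $\{(h\circ\varphi)^{2}>0\}\subset\{S_{ij}\ne0\}$. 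Since $h\circ\varphi$ is a pullback and $S_{ij}$ is an arbitrary element of $\mathcal{S}$, iterating does not shrink the problem: each pass trades one uncontrolled $\mathcal{S}$-multiplier for another, and nothing in the squaring device gives the stated termination. The assertion that ``a finite iteration of this device absorbs the remaining auxiliary multipliers'' is therefore unsupported; as written, your construction of $h$ never closes.

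The repair is precisely what the paper implicitly uses: once one notes that the $T_{ij}$ can be cancelled (because $\mathcal{S}$ consists of non-zero-divisors in $\mathcal{R}(X)$, equivalently $\mathcal{R}(X)\hookrightarrow\mathcal{S}^{-1}\mathcal{R}(X)$), the pointwise identity $g_{ij}\circ\varphi=f_{ij}(h\circ\varphi)^{2}$ holds with no residual multiplier and only the common denominator $h=\prod_{i,j}H_{ij}\in\mathcal{T}$ is needed. Your squaring bookkeeping then becomes superfluous, and your final reduction of \eqref{ok2} from \eqref{ok1} via $S\subset U$ and $\varphi|_{U}$ being a homeomorphism matches the paper's. (A secondary quibble: asserting that the cut-off by $V$ on the right-hand side of \eqref{ok2} is ``automatic because the left-hand side is contained in $V$'' presupposes the equality you are proving; one should argue directly that $\{h^{2}>0\}\cap\bigcup_{i}\bigcap_{j}\{g_{ij}\,*_{ij}\,0\}$ meets no point of $Y\setminus V$, using $\varphi^{-1}(V)=U$ and $S\subset U$.)
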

\begin{proof}
Let $g_{ij}\in{\mathcal R}(Y)$ and $h\in\Tt$ be such that
$$
\varphi^*\Big(\frac{g_{ij}}{h^2}\Big)=f_{ij}\quad\leadsto\quad g_{ij}\circ\varphi=f_{ij}\cdot(h\circ\varphi)^2. 
$$
Note that
$$
\{f_{ij}*_{ij}0\}\cap\{(h\circ\varphi)^2>0\}=\{(g_{ij}\circ\varphi)*_{ij}0\}\cap\{(h\circ\varphi)^2>0\}.
$$
Consequently, equality \eqref{ok1} holds. In addition, as the restriction $\varphi|_U:U\to V$ is a homeomorphism, $\varphi^{-1}(\varphi(U))=U$ and $\{(h\circ\varphi)^2>0\}\cap S\subset U$, equation \eqref{ok2} also holds, as required.
\end{proof}

\section{$C$-semianalytic sets}\label{s3}

A subset $S\subset M$ is a \em basic $C$-semianalytic \em if it admits a description of the type
$$
S:=\{x\in M:\ f(x)=0,g_1(x)>0,\ldots,g_r(x)>0\}
$$
where the functions $f,g_i\in\an(M)$. To uniform notations $S$ is a \em global $C$-semianalytic set \em if it is a finite union of basic $C$-semianalytic sets. We say that $S\subset M$ is a \em $C$-semianalytic set \em if it satisfies one of the following equivalent conditions:
\begin{itemize}
\item[(1)] $S$ is the union of a countable locally finite family of basic $C$-semianalytic sets.
\item[(2)] For each point $x\in M$ there exists an open neighborhood $U^x$ such that $S\cap U^x$ is a global $C$-semianalytic set. 
\end{itemize}

Recall that if $\{X_i\}_{i\geq1}$ is a locally finite family of $C$-semianalytic sets, then $X:=\bigcup_{i\geq1}X_i$ is a $C$-semianalytic set. To show the equivalence of conditions (1) and (2) above we prove first the following result concerning countable locally finite analytic refinements.

\begin{lem}[Countable locally finite analytic refinement]\label{cover}
Let ${\mathscr U}:=\{U_i\}_{i\in I}$ be an open covering of a real analytic manifold $M$. Then there exists a countable locally finite open refinement ${\mathscr V}:=\{V_j\}_{j\geq1}$ of ${\mathscr U}$ such that each $V_j=\{g_j>0\}$ for some analytic function $g_j\in\an(M)$.
\end{lem}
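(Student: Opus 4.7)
The plan is to use a proper real analytic embedding of $M$ into Euclidean space (Grauert's theorem) in order to exhibit, around each point, open neighborhoods of the form $\{g>0\}$ with $g\in\an(M)$, and then to extract a countable locally finite subcover via an exhaustion by compact sets.

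First, I would fix a proper real analytic embedding $\varphi:M\hookrightarrow\R^N$ and identify $M$ with $\varphi(M)$; such an embedding exists by Grauert's theorem, since $M$ is paracompact. For any $p\in\R^N$ and any $\veps>0$ the function
\[
g_{p,\veps}(y):=\veps^2-\|\varphi(y)-p\|^2
\]
belongs to $\an(M)$ and satisfies $\{g_{p,\veps}>0\}=M\cap B(p,\veps)$, where $B(p,\veps)\subset\R^N$ denotes the open Euclidean ball of centre $p$ and radius $\veps$. This furnishes a plentiful supply of open subsets of $M$ cut out by a single global analytic inequality, and any sufficiently small such set is contained in a prescribed open neighbourhood of its centre.

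Next, I would pick an exhaustion $M=\bigcup_{n\geq 1}K_n$ by compact sets with $K_n\subset\Int(K_{n+1})$, setting $K_{-1}=K_0=\varnothing$. For each $n\geq 1$ and each $x\in K_n\setminus\Int(K_{n-1})$, choose an index $i(x)\in I$ with $x\in U_{i(x)}$ and a positive $\veps_x$ small enough that
\[
M\cap B(\varphi(x),\veps_x)\subset U_{i(x)}\cap\bigl(\Int(K_{n+1})\setminus K_{n-2}\bigr).
\]
By compactness of $K_n\setminus\Int(K_{n-1})$, finitely many of the resulting open sets $V_{n,1},\ldots,V_{n,k_n}$ suffice to cover it, and each $V_{n,j}$ has the form $\{g_{n,j}>0\}$ with $g_{n,j}\in\an(M)$.

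Finally, after reindexing the doubly-indexed countable family $\{V_{n,j}\}$ as $\{V_j\}_{j\geq 1}$, I would verify the three required properties. Each $V_j$ is contained in some $U_i$ by construction, and the family covers $M$ because the $n$-th layer covers the shell $K_n\setminus\Int(K_{n-1})$ and these shells exhaust $M$. For local finiteness, any compact set $K\subset M$ lies in some $\Int(K_m)$; for $n\geq m+3$ every $V_{n,j}$ lies in the complement of $K_{n-2}\supset K_m\supset K$ and hence misses $K$, so only finitely many layers contribute intersections with $K$. The principal obstacle, and the reason why a partition-of-unity argument is unavailable in the analytic category, is to guarantee that the distinguishing functions $g_j$ are globally analytic on $M$; this is precisely what the Grauert embedding achieves, by transferring the problem to the explicit analytic functions $\veps^2-\|\varphi(\cdot)-p\|^2$.
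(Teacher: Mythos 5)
Your proof is correct, and it takes a genuinely different route from the paper. The paper first produces, by paracompactness plus $\sigma$-compactness, two countable locally finite refinements $\{W'_j\}$ and $\{W_j\}$ of ${\mathscr U}$ with $\cl(W'_j)\subset W_j$, then separates $\cl(W'_j)$ from $M\setminus W_j$ by a continuous function $f_j$ and invokes Whitney's analytic approximation theorem to replace $f_j$ by $g_j\in\an(M)$ with $|g_j-f_j|<\tfrac12$, so that $\cl(W'_j)\subset V_j:=\{g_j>0\}\subset W_j$; local finiteness and the covering property are inherited automatically from the two refinements. You instead use Grauert's embedding theorem to make the analytic functions \emph{explicit} — the sets $\{g_{p,\veps}>0\}=M\cap B(p,\veps)$ give a basis of neighbourhoods cut out by a single global analytic inequality — and then build the locally finite cover by hand, shell by shell, from an exhaustion by compact sets. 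The ingredients are of comparable depth (Whitney approximation on a real analytic manifold is itself usually proved via a Grauert embedding or a Stein neighbourhood of the complexification), so neither route is fundamentally lighter; what your version buys is that the defining analytic functions are completely concrete, and the refinement, covering and local-finiteness properties are verified directly rather than transferred from a pre-existing pair of refinements. Your bookkeeping (the convention $K_{-1}=K_0=\varnothing$, the choice of $\veps_x$ so that the ball lies in $\Int(K_{n+1})\setminus K_{n-2}$, and the observation that for $n\geq m+3$ the $n$-th layer misses $K_m$) is sound.
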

\begin{proof}
As $M$ is paracompact and it admits a countable exhaustion by compact sets, there exist countable locally finite open refinements ${\mathscr W}:=\{W_j\}_{j\geq1}$ and ${\mathscr W}':=\{W_j'\}_{j\geq1}$ of ${\mathscr U}$ such that $\cl(W_j')\subset W_j$ for each $j\geq1$. As the closed sets $\cl(W_j')$ and $M\setminus W_j$ are disjoint, there exists a continuous function $f_j:M\to\R$ such that $f_j|_{\cl(W_j')}\equiv 1$ and $f_j|_{M\setminus W_j}\equiv-1$. Let $g_j$ be an analytic approximation of $f_j$ such that $|g_j-f_j|<\frac{1}{2}$. Notice that
$$
\cl(W_j')\subset V_j:=\{g_j>0\}\subset W_j
$$ 
for each $j\geq1$. Thus, ${\mathscr V}:=\{V_j\}_{j\geq1}$ is a countable locally finite open refinement of ${\mathscr U}$.
\end{proof}

\begin{proof}[Proof of the equivalence of conditions \em (1) \em and \em (2) \em above]
Assume first that $S$ satisfies condition (1) and let $x\in M$. Let $g\in\an(M)$ be such that $x\in U:=\{g>0\}$ and $U$ intersects only finitely many $S_i$, say $S_1,\ldots,S_r$. Thus, $S\cap U=\bigcup_{i=1}^rS_i\cap\{g>0\}$ is global $C$-semianalytic.

Conversely, assume $S$ satisfies condition (2). For each $x\in M$ let $W^x\subset M$ be an open neighborhood of $x$ such that $S\cap W^x$ is a global $C$-semianalytic set. By Lemma \ref{cover} there exists a countable locally finite refinement ${\mathscr V}:=\{V_j\}_{j\geq1}$ of ${\mathscr W}:=\{W^x\}_{x\in M}$ such that each $V_j=\{g_j>0\}$ for some $g_j\in\an(M)$. For each $j\geq1$ let $x_j\in M$ be such that $V_j\subset W^{x_j}$ and let $S'_{j1},\ldots,S'_{j,r_j}$ be basic $C$-semianalytic sets such that $S\cap W^{x_j}=S'_{j1}\cup\cdots\cup S'_{j,r_j}$. Therefore
$$
S\cap V_j=S_{j1}\cup\cdots\cup S_{j,r_j}
$$
where $S_{jk}:=S_{jk}'\cap V_j$ is a basic $C$-semianalytic set. Notice that 
$$
S=S\cap\bigcup_{j\geq1}V_j=\bigcup_{j\geq1}S\cap V_j=\bigcup_{j\geq1}(S_{j1}\cup\ldots\cup S_{j,r_j})
$$
and that the family $\{S_{jk}:\ 1\leq k\leq r_j\}_{j\geq1}$ is locally finite because ${\mathscr V}$ is so. After reindexing the basic $C$-semianalytic sets $S_{jk}$ we are done.
\end{proof}

\begin{example}
Let $S:=\{x^2-(z^2-1)y^2=0, z>-\frac{1}{2}\}\cup\{x=0,y=0\}\subset\R^3$ be the double umbrella suggested by Coste (see \cite[\S1]{q}). 

The smallest $C$-analytic set that contains the $C$-semianalytic set $S$ is 
$$
X:=\{x^2-(z^2-1)y^2=0\}. 
$$
For each $x\in\R^3$ there exists an open neighborhood $U^x$ such that the intersection
$$
S\cap U^x=\{f=0\}\cap U^x
$$ 
for some $f\in\an(\R^3)$. However $S$ is not a $C$-analytic set because $S\subsetneq X$. Thus, if a $C$-semianalytic set admits local descriptions as a $C$-analytic set, it is not guaranteed that $S$ is in addition a $C$-analytic set. 
\end{example}

\begin{center}
\begin{figure}[ht]
\begin{minipage}{0.49 \textwidth} 
\begin{center}
\includegraphics[width=.70\textwidth]{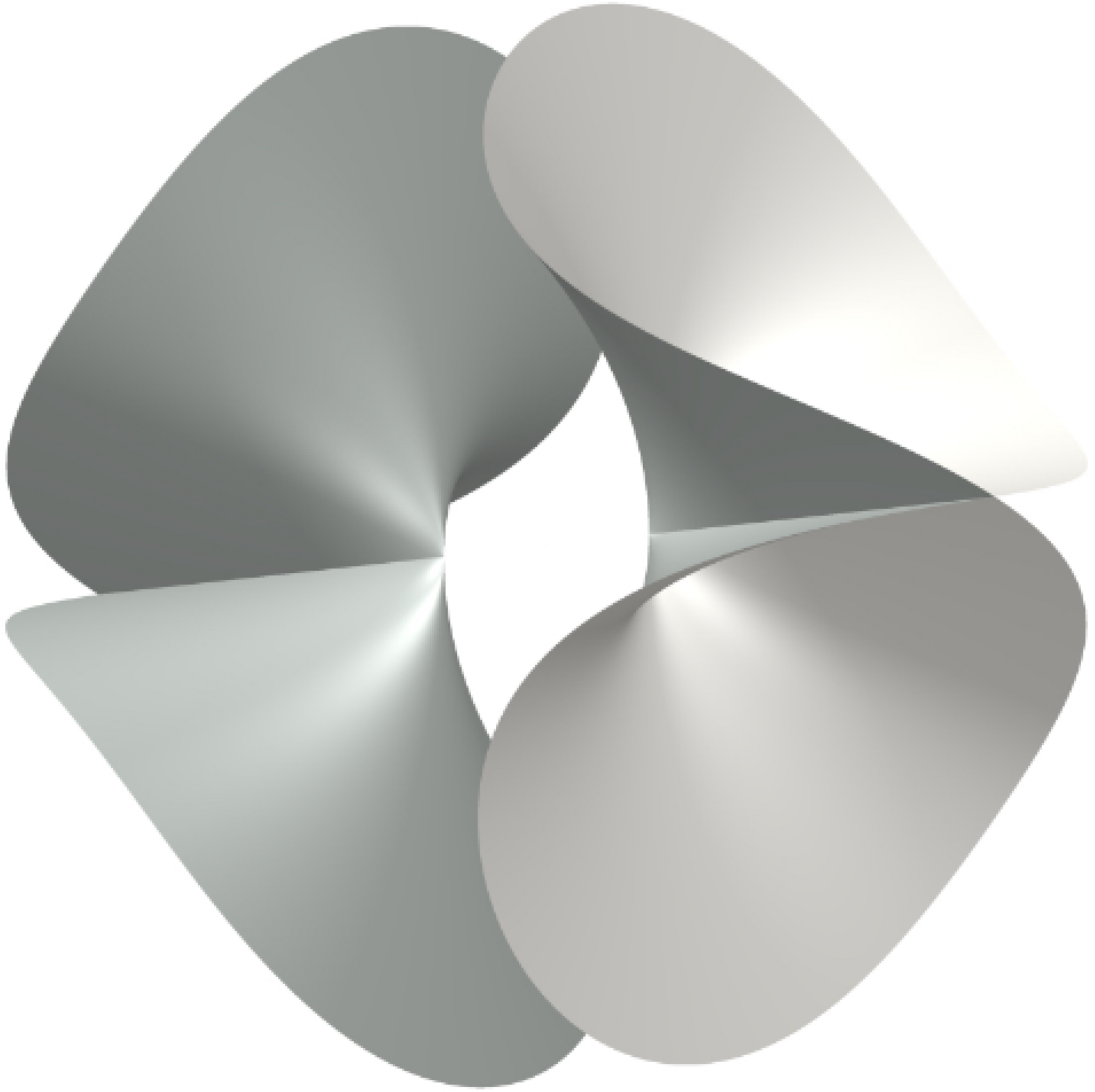}
\end{center}
\end{minipage}
\hfill 
\begin{minipage}{0.50 \textwidth} 
\begin{center}
\includegraphics[width=.85\textwidth]{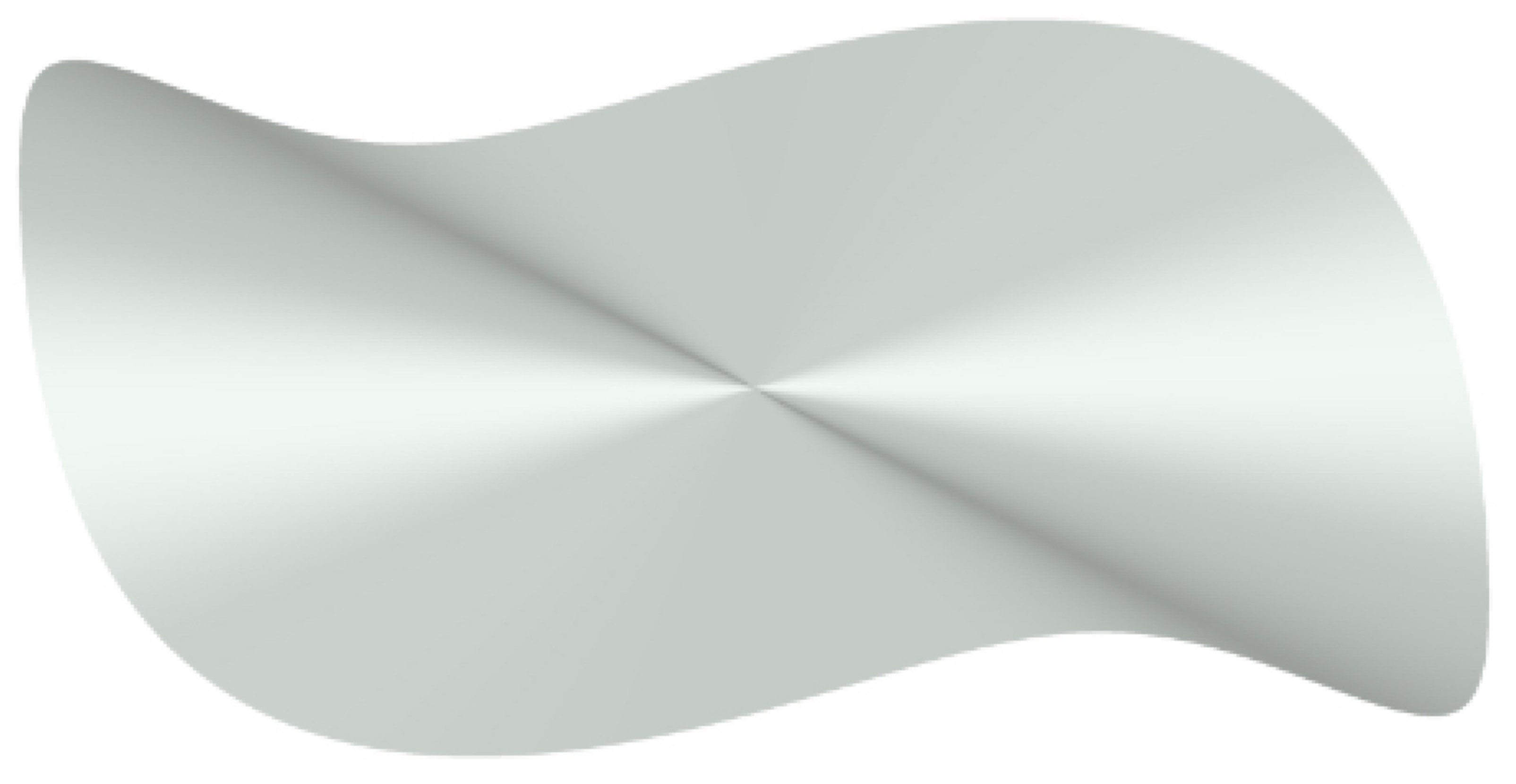}
\end{center}
\end{minipage}
\caption{Two dimensional parts of the double umbrella $x^2-(z^2-1)y^2=0$ and Cartan's umbrella.}
\end{figure}
\end{center}

There exist many semianalytic sets that are not $C$-semianalytic sets. We present next some examples.

\begin{examples}\label{counterexamples}
(i) We recall first a classical example of Cartan \cite[\S11]{c}. Consider the continuous function $f:\R\to\R$ given by the following formula: 
$$
a(z):=\begin{cases}
{\rm exp}(\frac{1}{z^2-1})&\text{if $|z|<1$,}\\
0&\text{if $|z|\geq1$.}
\end{cases}
$$
Clearly, $a$ is analytic on $\R\setminus\{-1,1\}$. Let $Z$ be the closed set of equation $z(x^2+y^2)-x^3a(z)=0$. Observe that $S:=Z\cap U$ is a $C$-analytic subset of $U:=\R^3\setminus\{x=0,y=0,|z|\geq1\}$. On the other hand, 
$$
Z\cap\{x^2+y^2<\veps,|z|>\tfrac{1}{2}\}=\{x=0,y=0,|z|>\tfrac{1}{2}\} 
$$
for $\veps>0$ small enough. Consequently, \em $Z$ is a semianalytic set\em. However, \em it is not a $C$-semianalytic subset of any of its open neighborhoods in $\R^3$\em. 

Fix an open neighborhood $U$ of $Z$ in $\R^3$. By \cite[Prop.18]{c} every analytic function $f$ on $U$ that vanishes at each point of $Z$ is identically zero. Assume that $Z$ is a $C$-semianalytic set. Then there exists an open neighborhood $V$ of the origin such that $Z\cap V$ is a finite union of basic $C$-semianalytic sets. One of these basic $C$-semianalytic sets $$S:=\{f=0,g_1>0,\ldots,g_r>0\}$$ contains a non-empty open subset of the connected real analytic manifold $N:=Z\setminus\{x=0,y=0\}$. As $f$ vanishes identically on an open subset of $N$, it is identically zero on $N$.

As $Z_0$ is an irreducible germ and $\{x=0,y=0\}$ is irreducible, $f$ vanishes identically on the line $\{x=0,y=0\}$. Thus, $f$ is identically zero on $Z$ and consequently on $U$, so 
$$
S=\{f=0,g_1>0,\ldots,g_r>0\}=\{g_1>0,\ldots,g_r>0\}
$$ 
is a non-empty open subset of $U$, a contradiction.

(ii) Consider the closed subset $Z\subset\R^3$ defined by the equation
$$
f:=(1-4(x^2+y^2+z^2))x^2-((y-1)^2+z^2-1)^2a(z)=0
$$
where $a(z)$ is the same function as in the previous example. \em It is a semianalytic set and it is compact\em. Observe that $S$ is the union of the circle $C_1:=\{x=0,(y-1)^2+z^2-1=0\}$ with a two dimensional $C$-semianalytic set $S_{(2)}$ contained in the ball $\{x^2+y^2+z^2\leq\frac{1}{4}\}$. As before, one proves that \em it is not a $C$-semianalytic subset of any of its open neighborhoods in $\R^3$\em.

\begin{center}
\begin{figure}[ht]
\begin{minipage}{0.24 \textwidth} 
\begin{center}
\hspace*{-3mm}\includegraphics[width=.60\textwidth]{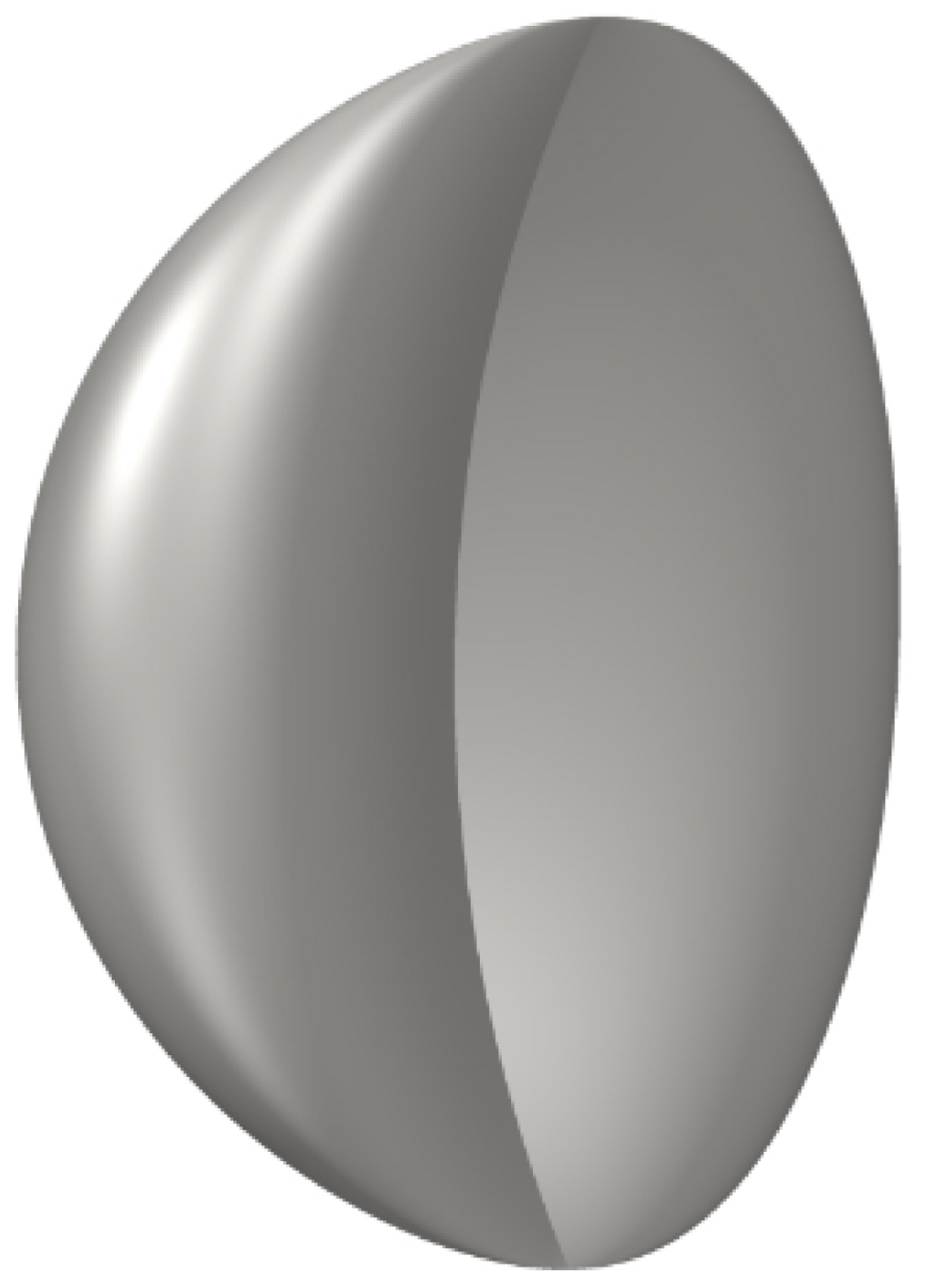}
\end{center}
\end{minipage}
\hfill 
\begin{minipage}{0.24 \textwidth} 
\begin{center}
\hspace*{-3mm}\includegraphics[width=.85\textwidth]{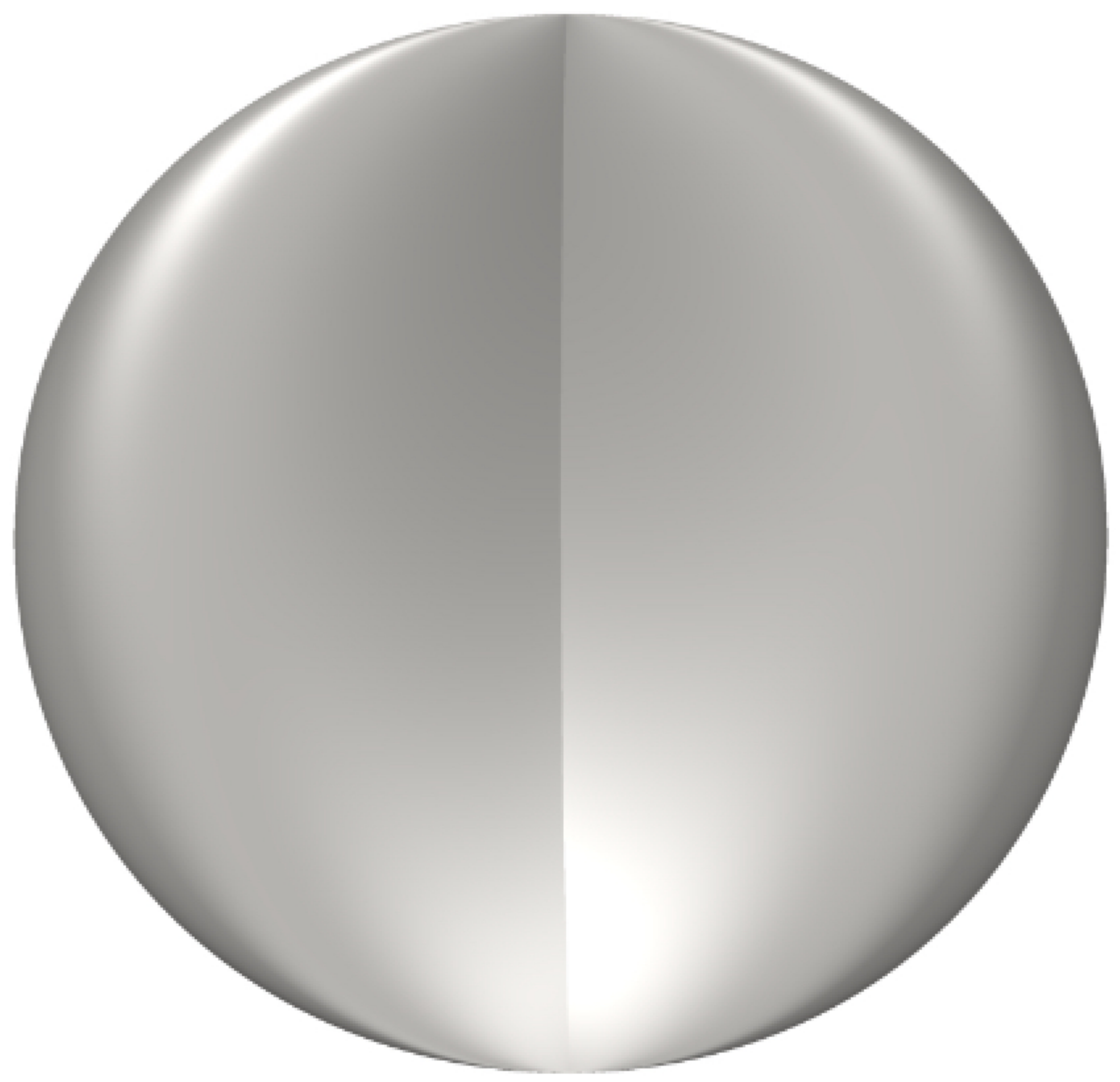}
\end{center}
\end{minipage}
\hfill 
\begin{minipage}{0.24\textwidth}
\begin{center}
\hspace*{-3mm}\includegraphics[width=.85\textwidth]{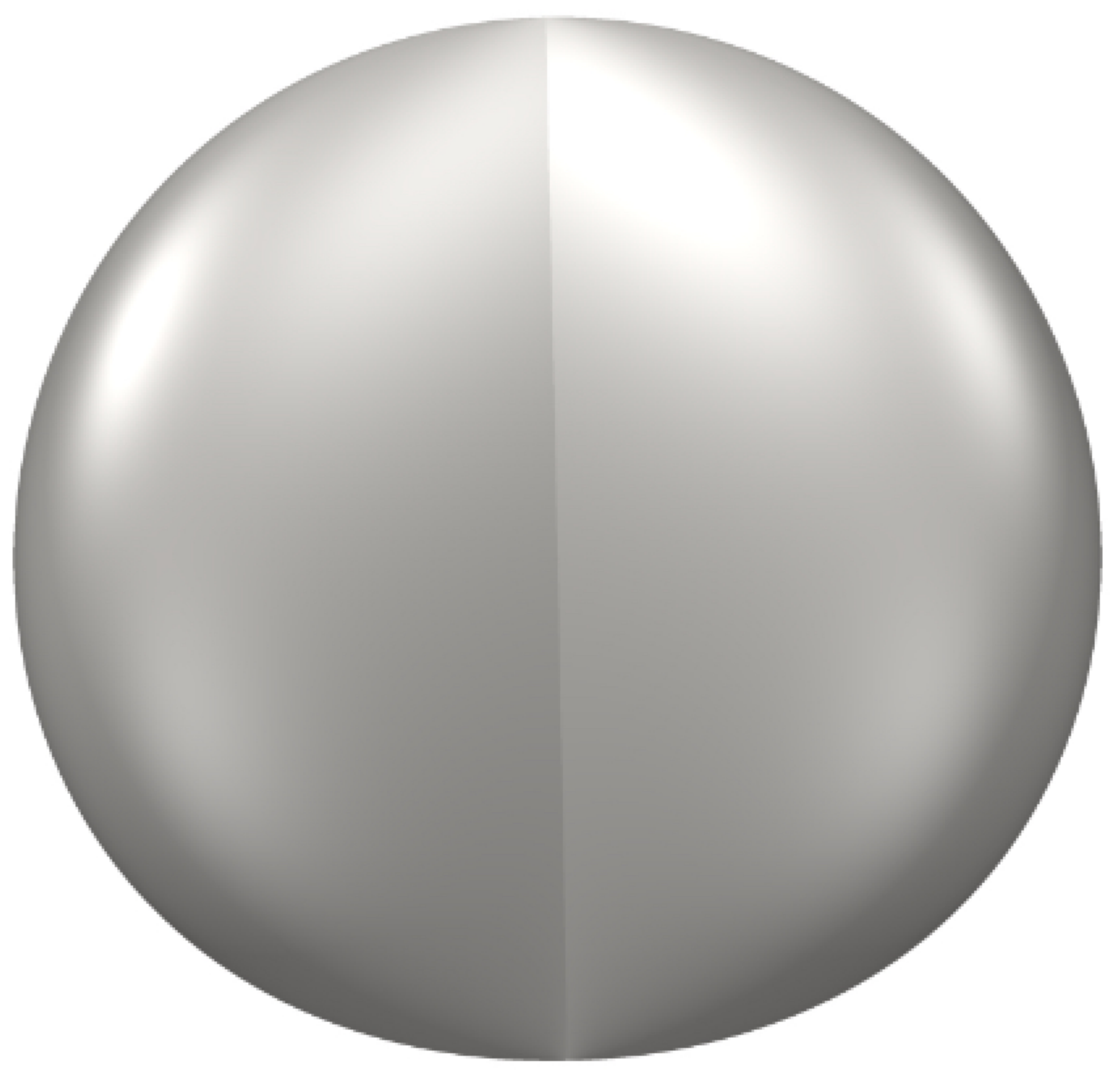}
\end{center}
\end{minipage}
\hfill
\begin{minipage}{0.24\textwidth}
\begin{center}
\hspace*{-3mm}\includegraphics[width=.60\textwidth]{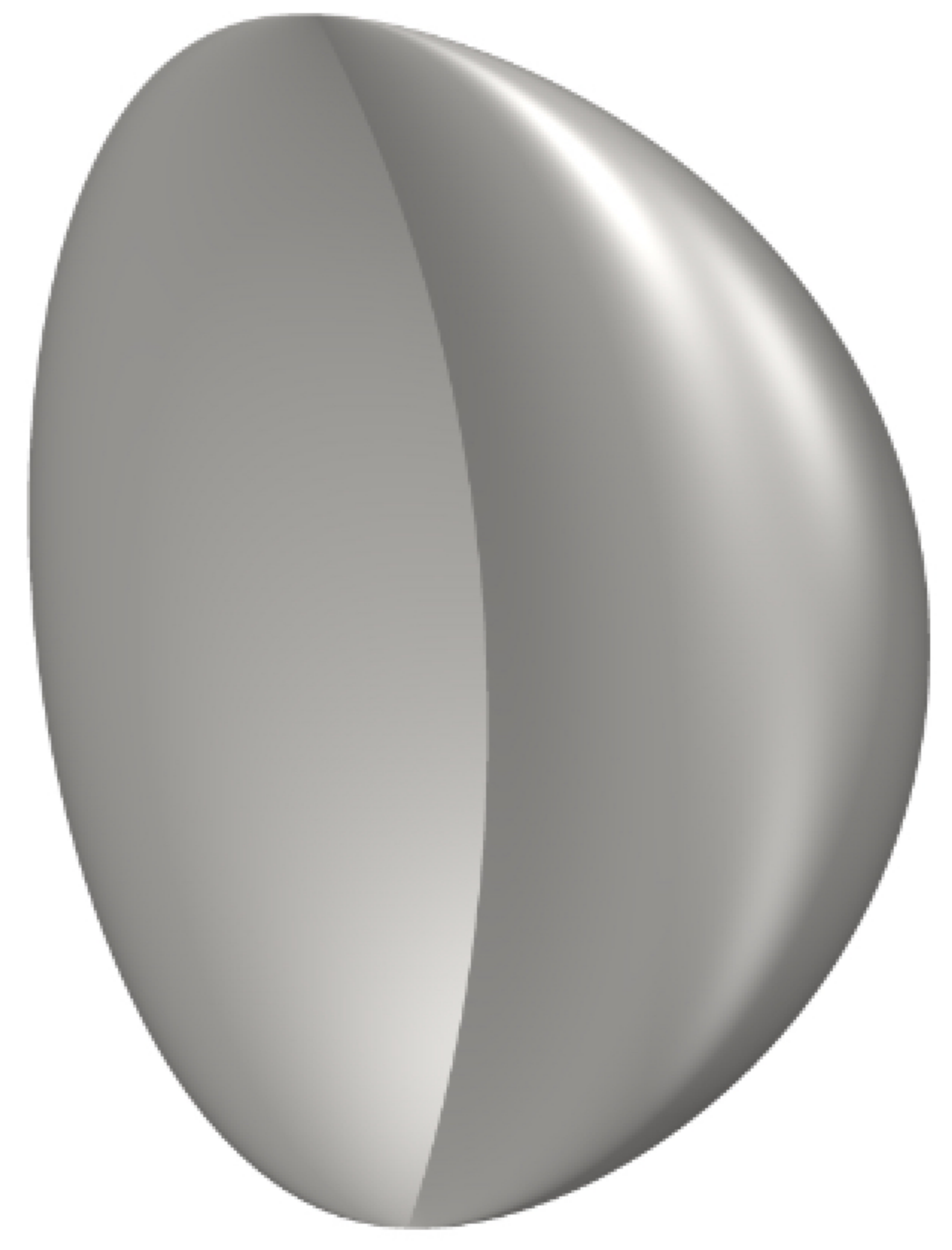}
\end{center}
\end{minipage}

\caption{Several views of the two dimensional part $S_{(2)}$ of the seminanalytic set $S$ in Example \ref{counterexamples}(ii)}
\end{figure}
\end{center}
\end{examples}

\subsection{Basic properties of $C$-semianalytic sets}\label{bpcss}
The family of $C$-semianalytic sets is closed under: 
\begin{itemize}
\item locally finite unions, 
\item locally finite intersections, 
\item complement, 
\item inverse image under analytic maps between real analytic manifolds, 
\item taking closure and interior and 
\item considering connected components.
\end{itemize}
\begin{proof}
The first three properties are clear. For the remaining ones we proceed as follows.

\paragraph{}\em Inverse image\em. If $f:N\to M$ is an analytic map between real analytic manifolds $N$ and $M$ and $S:=\{g_1>0,\ldots,g_r>0,h=0\}\subset M$ is a basic $C$-semianalytic set where $g_i,h\in\an(M)$,
$$
f^{-1}(S)=\{x\in M:\ f(x)\in N\}=\{g_1\circ f>0,\ldots,g_r\circ f>0,h\circ f=0\}\subset N
$$
is a basic $C$-semianalytic set where $g_i\circ f,h\circ f\in\an(N)$. 

\paragraph{}\label{cl}\em Closure\em. To prove that the closure of a $C$-semianalytic set is a $C$-semianalytic set, we observe first that if $M=\bigcup_{j\in J}U_j$ is an open covering of $M$, then 
$$
\cl(S)=\bigcup_{j\in J}\cl(S)\cap U_j=\bigcup_{j\in J}\cl(S\cap U_j)\cap U_j.
$$
Thus, it is enough to check that for each $x\in M$ there exists a neighborhood $U^x$ of $x$ in $M$ such that $\cl(S\cap U^x)\cap U^x$ is a global $C$-semianalytic set. Take an open $C$-semianalytic neighborhood of $x\in M$ such that $\cl(U^x)$ is a compact $C$-semianalytic set and apply \cite[VIII.7.2]{abr}. 

\paragraph{}\em Interior\em. It is enough to use that the complement and the closure of a $C$-semianalytic set are $C$-semianalytic sets.

\paragraph{}\label{cc}\em Connected components\em. For each $x\in M$ there exists by \cite[VIII.7.5]{abr} an open basic $C$-semianalytic set $U^x$ such that $S\cap U^x$ is the union of $m$ connected $C$-semianalytic sets and $m$ coincides with the number of connected components of the germ $S_x$. Let now $C$ be a connected component of $S$. For each $x\in M$ pick a neighborhood $U^x$ as above and observe that $C\cap U$ is a finite union of the family of the connected components of $S\cap U$, so it is a global $C$-semianalytic set. Consequently, $C$ is a $C$-semianalytic set, as required.
\end{proof}

A classic result for semialgebraic sets states: \em each semialgebraic set $S\subset\R^n$ that is in addition open admits a description of the type $S=\bigcup_{i=1}^r\{g_{i1}>0,\ldots,g_{ir}>0\}$ where $g_{ij}\in\R[\x]:=\R[\x_1,\ldots,\x_n]$. If $S$ is in addition closed, it can be written as $S=\bigcup_{i=1}^r\{h_{i1}\geq0,\ldots,h_{ir}>0\}$ where $h_{ij}\in\R[\x]$\em. By \cite[3.1]{abs} the previous results extend to open and closed global $C$-semianalytic sets. 

\begin{lem}[Homogeneity of open and closed $C$-semianalytic sets]\label{hop}
Let $S\subset M$ be a $C$-semi\-analytic set. We have:
\begin{itemize}
\item[(i)] If $S$ is open, for each $x\in M$ there exists an open neighborhood $U^x\subset M$ of $x$ and analytic functions $g_{ij}\in\an(M)$ such that $S\cap U^x=\bigcup_{i=1}^r\{g_{i1}>0,\ldots,g_{ir}>0\}$.
\item[(ii)] If $S$ is closed, for each $x\in M$ there exists an open neighborhood $U^x\subset M$ of $x$ and analytic functions $g_{ij}\in\an(M)$ such that $S\cap\cl(U^x)=\bigcup_{i=1}^r\{g_{i1}\geq0,\ldots,g_{ir}\geq0\}$.
\end{itemize}
\end{lem}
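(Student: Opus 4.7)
The strategy is to reduce both assertions to the already cited result \cite[3.1]{abs}, which provides the homogeneous descriptions for open, respectively closed, \emph{global} $C$-semianalytic sets. The only real content is to arrange, at each point $x\in M$, a neighborhood on which the intersection with $S$ becomes a global (not merely locally finite) $C$-semianalytic set of the correct topological type in $M$.

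For (i), I would fix $x\in M$ and use condition (2) of the definition of $C$-semianalytic set to produce an open neighborhood $U^x$ such that $S\cap U^x$ is a global $C$-semianalytic set. Since $S$ and $U^x$ are both open in $M$, the intersection $S\cap U^x$ is itself an open subset of $M$; applying \cite[3.1]{abs} to this open global $C$-semianalytic set yields analytic functions $g_{ij}\in\an(M)$ with $S\cap U^x=\bigcup_{i=1}^r\{g_{i1}>0,\ldots,g_{ir}>0\}$, as required.

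For (ii), the subtlety is that a local description of $S$ on a neighborhood $V$ of $x$ does not automatically produce a global description of $S\cap\cl(U^x)$. My plan is therefore: first pick an open neighborhood $V$ of $x$ such that $S\cap V$ is a global $C$-semianalytic set, and then, using Lemma \ref{cover} together with the paracompactness of $M$, choose an open $C$-semianalytic neighborhood $U^x$ of $x$ whose closure $\cl(U^x)$ is compact and contained in $V$. Because the family of $C$-semianalytic sets is closed under closure (see \ref{bpcss}), $\cl(U^x)$ is a compact $C$-semianalytic set, and by \cite[VIII.7.2]{abr} every compact $C$-semianalytic set is in fact global. Consequently $S\cap\cl(U^x)=(S\cap V)\cap\cl(U^x)$ is a closed \emph{global} $C$-semianalytic set of $M$, and a second application of \cite[3.1]{abs} furnishes $g_{ij}\in\an(M)$ with $S\cap\cl(U^x)=\bigcup_{i=1}^r\{g_{i1}\geq 0,\ldots,g_{ir}\geq 0\}$.

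The heart of the argument — and the only place where the closed case is genuinely more delicate than the open one — is this choice of neighborhood: one must shrink below a $V$ on which $S$ is already globally $C$-semianalytic so that, after intersecting with the compact $\cl(U^x)$, the resulting set lies in the scope of \cite[3.1]{abs}. Once that step is in place, both (i) and (ii) are immediate.
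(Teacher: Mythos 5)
Your proof is correct and follows the same route as the paper: both reduce (i) and (ii) to \cite[3.1]{abs} after picking a neighborhood $U^x$ so that $S\cap U^x$, respectively $S\cap\cl(U^x)$, is a global $C$-semianalytic set of the right topological type. The only difference is that you make explicit, via a relatively compact $C$-semianalytic neighborhood inside the globalizing neighborhood $V$, why such a $U^x$ exists in case (ii), a step the paper's one-line proof asserts without spelling out.
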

\begin{proof}
Fix a point $x\in M$ and let $U^x\subset M$ be an open neighborhood of $x$ such that $S\cap U^x$ and $S\cap\cl(U^x)$ are global $C$-semianalytic set. The statement follows from \cite[3.1]{abs} applied to $S\cap U^x$ and $S\cap\cl(U^x)$. 
\end{proof}

\subsection{Stronger results concerning closure and connected components} 
For later purposes we will need stronger results concerning the closure and the connected components of $C$-semianalytic sets. Let $(X,\an_X)$ be a reduced Stein space endowed with an anti-involution $\sigma$ such that its fixed part space $X^\sigma$ is non-empty and let $\u:=(\u_1,\ldots,\u_m)$, where $m\geq0$, be a tuple of variables. Denote either $\an(X^\sigma)[\u]$ or ${\mathcal A}(X^\sigma)[\u]$ with ${\mathscr A}$. A $C$-semianalytic set $S\subset X^\sigma\times\R^m$ is \em ${\mathscr A}$-definable \em if for each $x\in X^\sigma$ there exists an open neighborhood $U^x$ such that $S\cap U^x$ is a finite union of sets of the type $\{f=0,g_1>0,\ldots,g_r>0\}$ where $f,g_i\in{\mathscr A}$. We define analogously ${\mathscr A}$-definable global and basic $C$-semianalytic sets.

\begin{prop}\label{salvation}
Let $S\subset X^\sigma\times\R^m$ be either a bounded global $C$-semianalytic set or a $C$-semianalytic set. Assume $S$ is ${\mathscr A}$-definable. Then 
\begin{itemize}
\item[(i)] For each $x\in X^\sigma$ there exists an open ${\mathscr A}$-definable basic $C$-semianalytic set $U^x$ such that $S\cap U^x$ is the union of $m$ connected ${\mathscr A}$-definable $C$-semianalytic sets and $m$ coincides with the number of connected components of the germ $S_x$.
\item[(ii)] The closure of $S$ and its connected components are 
\begin{itemize}
\item[(ii.1)] ${\mathscr A}$-definable global $C$-semianalytic sets in the first case.
\item[(ii.2)] ${\mathscr A}$-definable $C$-semianalytic sets in the second case.
\end{itemize}
\end{itemize}
\end{prop}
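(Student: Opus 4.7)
My plan is to adapt the standard proofs of the non-definable versions cited in subsection \ref{bpcss} (namely \cite[VIII.7.2, VIII.7.5]{abr}) while tracking that all defining functions which appear along the way can be chosen inside $\mathscr{A}$. Two ingredients permit this tracking: the hypothesis that $S$ itself is $\mathscr{A}$-definable (which supplies the initial equalities $f_i=0$ and strict inequalities $g_{ij}>0$ inside $\mathscr{A}$), and Corollary \ref{excellpol}, by which the localizations of $\mathscr{A}$ at maximal ideals are excellent and the comparison map to $\an_{X^\sigma\times\R^m,(x_0,u_0)}$ is regular. Regularity yields an Artin-type approximation: any analytic germ can be approximated to arbitrary order by an $\mathscr{A}$-germ, and in particular a germ that is strictly positive at the chosen point stays strictly positive on a neighborhood after replacement.

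For part (i) I would fix $(x_0,u_0)\in X^\sigma\times\R^m$, use the $\mathscr{A}$-definability of $S$ to write $S\cap V$ as a finite union of basic sets with defining functions in $\mathscr{A}$ on some open neighborhood $V$, and then invoke \cite[VIII.7.5]{abr} on $X^\sigma\times\R^m$ to obtain an open basic $C$-semianalytic neighborhood $U'\subset V$ on which $S$ splits into its germ-level connected components. A neighborhood of the form $U'=\{h>0\}$ can be replaced by $\{h'>0\}$ with $h'\in\mathscr{A}$ and $h'(x_0,u_0)>0$ via Corollary \ref{excellpol}. The strict inequalities defining each connected component come from sign conditions on ring-theoretic expressions in the $f_i,g_{ij}$ together with auxiliary germs produced by the proof of \cite[VIII.7.5]{abr}; each such auxiliary germ is again $\mathscr{A}$-approximable by the same corollary, and the equalities already live in $\mathscr{A}$, so the full description is $\mathscr{A}$-definable. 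Part (ii.1) is the same approximation/tracking argument applied to the closure theorem \cite[VIII.7.2]{abr}, boundedness ensuring that the approximation needs only be performed over a finite subcover. Part (ii.2) reduces to (ii.1) and to part (i) by using Lemma \ref{cover} to produce a countable locally finite open cover $\{V_j=\{h_j>0\}\}_{j\geq1}$ of $X^\sigma\times\R^m$ whose $h_j$ can be taken in $\mathscr{A}$ (again by Corollary \ref{excellpol}), applying the bounded or local case on each $S\cap V_j$, and patching via the formula $\cl(S)=\bigcup_j\cl(S\cap V_j)\cap V_j$ together with the local decomposition of connected components.

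The main obstacle will be precisely this upgrade from $\an(X^\sigma\times\R^m)$-definability to $\mathscr{A}$-definability: the proofs of \cite[VIII.7.2, VIII.7.5]{abr} manufacture various auxiliary analytic germs (local coordinates, equations for singular loci, distances to lower-dimensional strata, bump functions) that are not a priori in $\mathscr{A}$, and each such germ must be replaced by an $\mathscr{A}$-element while preserving every strict inequality used in the construction. This is workable because sign conditions at a point are open and because the hypothesis already gives us the $f_i=0$ in $\mathscr{A}$ (so the delicate equalities are not affected by approximation), but it requires running the full chain of approximations uniformly across the decomposition. Corollary \ref{excellpol} is what makes each individual replacement possible and keeps the argument from leaving $\mathscr{A}$.
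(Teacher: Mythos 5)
Your key insight matches the paper's: Corollary~\ref{excellpol} supplies excellence of ${\mathscr A}'_{\gtg_{(x_0,u_0)}}$ and regularity of the map to the local analytic ring, which is precisely what makes the machinery of \cite[VIII.7.1--2 \& VIII.7.5--6]{abr} available in the ${\mathscr A}$-definable setting. However, there are two concrete gaps between your proposal and a working proof.

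First, your execution for (i) and (ii.1) is framed as ``apply the theorem of \cite{abr} and then approximate each auxiliary germ by an ${\mathscr A}$-element.'' This is not quite right: if you replace a germ $g$ that appears in a strict inequality $\{g>0\}$ in the output description by an ${\mathscr A}$-approximant $g'$, the sets $\{g'>0\}$ and $\{g>0\}$ need not agree on any neighborhood of the chosen point, so the resulting family need not consist of the connected components any more. (Your remark that ``sign conditions at a point are open'' preserves the inequality \emph{at the point}, but you need set equality on the whole neighborhood.) The paper avoids this entirely: it does not approximate the output of the theorems, it reruns the \emph{proofs} of \cite[VIII.7.1--2 \& VIII.7.5--6]{abr} ``verbatim'' with the base ring $\an(\R^n)_{\gtm_{x_0}}$ replaced by ${\mathscr A}'_{\gtg_{(x_0,u_0)}}$ from the start. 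Since those proofs only invoke excellence of the ring and regularity of the comparison map, and since ${\mathscr A}'_{\gtg_{(x_0,u_0)}}$ has the same two properties, every Artin-approximation step internal to those proofs produces approximants that are already in ${\mathscr A}'$, and the conclusion (the description of components, closures, etc.) lands automatically in ${\mathscr A}'$ with no retrofitting. This is a cleaner and strictly safer argument than post-hoc replacement.

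Second, you work directly on $X^\sigma\times\R^m$ and invoke Corollary~\ref{excellpol} there, but that corollary is stated for $\R^n\times\R^m$. For a general reduced Stein space $X$, the paper first reduces to the case where $X$ is a complex analytic subset of $\C^n$ via Lemma~\ref{excel-conv} and Lemma~\ref{transfer}; this reduction is also what establishes that each point of $X^\sigma$ has a basis of open ${\mathcal A}(X^\sigma)$-definable basic $C$-semianalytic neighborhoods, which you need to set up the local problem. Without this step the invocation of Corollary~\ref{excellpol} is not justified. Relatedly, your claim in (ii.2) that the cover $\{V_j=\{h_j>0\}\}$ from Lemma~\ref{cover} can be chosen with $h_j\in{\mathscr A}$ ``by Corollary~\ref{excellpol}'' does not follow --- that corollary is a local statement about localizations and says nothing about global approximation of analytic functions by elements of ${\mathscr A}$. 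Fortunately this is not needed: for (ii.2) it suffices, as in \ref{cl} and \ref{cc}, to cover by arbitrary open sets, pick around each point a relatively compact ${\mathscr A}$-definable basic neighborhood $U^x$ (available after the reduction), apply (ii.1) to $S\cap U^x$, and patch via $\cl(S)\cap U^x=\cl(S\cap U^x)\cap U^x$.
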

\begin{proof}
We prove first (i). As $S$ is bounded, its closure $\cl(S)$ is compact, so we are reduced to prove the result for a small enough open neighborhood of each point of $\cl(S)$. Notice that each point of $X^\sigma$ has a basis of open ${\mathcal A}(X^\sigma)$-definable basic $C$-semianalytic neighborhoods in $X^\sigma$. This is clear if $X$ is a complex analytic subset of $\C^n$ and it follows from Lemmas \ref{excel-conv} and \ref{transfer} if $(X,\an_X)$ is a general reduced Stein space.

Fix a point $(x_0,u_0)\in\cl(S)$ and let $W$ be an small enough open ${\mathscr A}$-definable basic $C$-semianalytic neighborhood of $(x_0,u_0)$ in $X^\sigma\times\R^m$. Consider the bounded global $C$-semianalytic set $S\cap W$. By Lemmas \ref{excel-conv} and \ref{transfer}, we may assume $X$ is a complex analytic subset of $\C^n$, so $S\cap W$ is an ${\mathscr A}'$-definable global $C$-semianalytic subset of $\R^n\times\R^m$, where ${\mathscr A}'$ is either $\an(\R^n)[\u]$ or ${\mathcal A}(\R^n)[\u]$.

By Lemmas \ref{excell} and \ref{excellpol} one can reproduce `verbatim' the proofs of \cite[VIII.7.1-2 \& VIII.7.5-6]{abr} to prove (i) and (ii.1), substituting everywhere the local excellent ring $\an(\R^n)_{\gtm_{x_0}}$ by the local excellent ring ${\mathscr A}'_{\gtg_{(x_0,u_0)}}$ where $\gtg_{(x_0,u_0)}$ is the maximal ideal of ${\mathscr A}'$ associated to $(x_0,u_0)$.

To prove (ii.2) one proceeds as in the proof of \ref{cl} and \ref{cc} using (i) and (ii.1) instead of \cite[VIII.7.2 \& VIII.7.5]{abr}.
\end{proof}

\subsection{Dimension of a $C$-semianalytic set}
Let $S\subset M$ be a $C$-semianalytic set. We define the dimension of $S$ as $\dim(S):=\sup_{x\in M}\{\dim(S_x)\}$. For the dimension of semianalytic germs see \cite[VIII.2.11]{abr}. Given a subset $E\subset M$, we denote the Zariski closure of $E$ in $M$ (that is, the smallest $C$-analytic subset of $M$ that contains $E$) with $\ol{E}^{\zar}$. It is an easy exercise to check the following. 

\begin{lem}\label{dimgcss}
Let $S\subset M$ be a global $C$-semianalytic set. Then $\dim(\ol{S}^{\zar})=\dim(S)$.
\end{lem}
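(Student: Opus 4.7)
The inclusion $S \subset \ol{S}^{\zar}$ gives the inequality $\dim(S)\leq\dim(\ol{S}^{\zar})$ for free, so the task is the reverse. The plan is to first reduce to $S$ basic, then decompose the Zariski closure into irreducible components and use minimality to localize the problem to each component, and finally to exploit the local structure of regular points of an irreducible $C$-analytic set. For the initial reduction, write $S=\bigcup_{i=1}^{r}S_i$ with each $S_i$ basic; since finite unions of $C$-analytic sets are $C$-analytic one has $\ol{S}^{\zar}=\bigcup_{i=1}^{r}\ol{S_i}^{\zar}$, while $\dim S=\max_i\dim S_i$ and $\dim\ol{S}^{\zar}=\max_i\dim\ol{S_i}^{\zar}$. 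Hence it suffices to assume $S=\{f=0,g_1>0,\ldots,g_r>0\}$ with $f,g_j\in\an(M)$.

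Set $W:=\ol{S}^{\zar}$ and decompose $W=\bigcup_k Z_k$ into its locally finite irredundant family of irreducible components. As $\dim W=\sup_k\dim Z_k$, it is enough to prove $\dim Z_k\leq\dim S$ for each $k$. The key consequence of the minimality of the Zariski closure is that $V_k:=\ol{S\cap Z_k}^{\zar}$ equals $Z_k$: the $C$-analytic set $V_k\cup\bigcup_{j\neq k}Z_j$ still contains $S$, so it contains $W$, and intersecting with $Z_k$ yields $Z_k=(Z_k\cap V_k)\cup\bigcup_{j\neq k}(Z_k\cap Z_j)$; irreducibility of $Z_k$ combined with irredundancy of the decomposition (which forbids $Z_k\subset\bigcup_{j\neq k}Z_j$) forces $Z_k=Z_k\cap V_k$, that is $V_k=Z_k$.

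Since $S\subset\{f=0\}$ one then gets $Z_k=V_k\subset\{f=0\}$, so $f$ vanishes identically on $Z_k$ and $S\cap Z_k=\{g_1>0,\ldots,g_r>0\}\cap Z_k$ is a non-empty open subset of $Z_k$. The singular set $\Sing Z_k$ is a proper $C$-analytic subset of $Z_k$ of strictly smaller dimension (see \ref{sing}), so if $S\cap Z_k$ were contained in $\Sing Z_k$ we would have $V_k\subset\Sing Z_k\subsetneq Z_k$, contradicting $V_k=Z_k$. Therefore one can pick a point $x\in S\cap Z_k\cap\Reg Z_k$.

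The final step is to show $\dim(Z_k)_x=\dim Z_k$ for every $x\in\Reg Z_k$. Taking an invariant irreducible complexification $\widetilde Z_k$ of $Z_k$ (whose existence is guaranteed by irreducibility of $Z_k$) of pure complex dimension $d:=\dim_{\mathbb C}\widetilde Z_k$, smoothness of $\widetilde Z_k$ at the real point $x$ lets one choose, from the invariant local ideal of $\widetilde Z_k$ at $x$, a system of $n-d$ \emph{invariant} holomorphic local equations whose complex Jacobian at $x$ has rank $n-d$; invariance of these equations together with the reality of $x$ forces this Jacobian to be a real matrix, so by the real implicit function theorem the germ $(Z_k)_x$ is a real analytic manifold of dimension $d$. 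This yields $\dim Z_k=d$ and $\dim(Z_k)_x=d$, and since $S\cap Z_k$ is open in $Z_k$ one obtains $\dim(S\cap Z_k)\geq\dim(S\cap Z_k)_x=d=\dim Z_k$. Consequently $\dim S\geq\dim Z_k$ for every $k$ and $\dim S\geq\sup_k\dim Z_k=\dim\ol{S}^{\zar}$. The main obstacle is precisely this last paragraph: identifying the real local dimension at a regular point with the complex dimension of the invariant complexification, which is the only place where the $C$-analytic nature of $Z_k$ (and not merely its real analyticity) is genuinely used.
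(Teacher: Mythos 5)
Your proof is correct. The paper gives no argument for this lemma (it is stated as ``an easy exercise''), so there is nothing to compare against; what you wrote is a full and valid derivation.

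Two small implicit points worth spelling out if this were to appear in print. First, in the step where $Z_k=(Z_k\cap V_k)\cup\bigcup_{j\neq k}(Z_k\cap Z_j)$, you need $\bigcup_{j\neq k}(Z_k\cap Z_j)$ to itself be a $C$-analytic set so that irreducibility of $Z_k$ applies; this holds because the family $\{Z_j\}_j$ is locally finite, hence so is $\{Z_k\cap Z_j\}_{j\neq k}$, and locally finite unions of $C$-analytic sets are $C$-analytic. Second, the assertion ``this yields $\dim Z_k=d$'' uses, besides the computation $\dim(Z_k)_x=d$ at regular $x$, the upper bound $\dim Z_k\leq d$; this comes from the fact that for every $y\in Z_k$ the local complexification of $(Z_k)_y$ sits inside $(\widetilde{Z}_k)_y$, whence $\dim_{\R}(Z_k)_y\leq\dim_{\C}(\widetilde{Z}_k)_y\leq d$. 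Likewise, producing $n-d$ \emph{invariant} local generators with independent differentials at $x$ requires replacing a system of holomorphic generators by their $\Re$ and $\Im$ parts (which lie in the invariant ideal because $\widetilde{Z}_k$ is invariant) and selecting $n-d$ of these with independent differentials over $\C$; that their Jacobian at the real point $x$ is real is precisely the invariance computation you indicate. None of these affect the correctness of the argument.
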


We prove next that the set of points of dimension $k$ of a $C$-semianalytic set $S$ is a $C$-semianalytic set.

\begin{prop}\label{dimks}
Let $S\subset M$ be a $C$-semianalytic set and let $S_{(k)}$ be the subset of $S$ of points of (local) dimension $k$. Then $S_{(k)}$ is a $C$-semianalytic set for all $k\geq0$.
\end{prop}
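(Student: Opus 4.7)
The condition ``$x\in S_{(k)}$'' depends only on the germ $S_x$, hence is local in $M$. So by Lemma \ref{cover} and the definition of $C$-semianalytic sets it suffices to prove that every $x_0\in M$ admits an open neighborhood $U$ such that $(S\cap U)_{(k)}$ is a global $C$-semianalytic subset of $U$ built from sections of $\an(M)$. Shrink $U$ so that $S\cap U=\bigcup_{i=1}^r T_i$ where each $T_i=\{f_i=0\}\cap O_i$ is basic with $O_i=\{g_{i,1}>0,\ldots,g_{i,s_i}>0\}$ open and $f_i,g_{i,j}\in\an(M)$. Because $O_i$ is open, $T_{i,x}=\{f_i=0\}_x$ for every $x\in T_i$, so
\[
\dim S_x=\max\{\dim Z(f_i)_x:\ x\in T_i\},\qquad Z(f_i):=\{f_i=0\}.
\]

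Partition $S\cap U$ into the $C$-semianalytic strata $S_I:=\bigcap_{i\in I}T_i\setminus\bigcup_{i\notin I}T_i$ indexed by the non-empty $I\subset\{1,\ldots,r\}$. On $S_I$ the local dimension of $S$ equals $\max_{i\in I}\dim Z(f_i)_x$, so the problem reduces to the following claim: for every $C$-analytic set $Z\subset M$ and every $k\geq 0$, the sublevel set $Z_{(\geq k)}:=\{x\in Z:\ \dim Z_x\geq k\}$ is $C$-semianalytic. Granting the claim, $Z_{(k)}=Z_{(\geq k)}\setminus Z_{(\geq k+1)}$ is $C$-semianalytic, and by assembling these pieces across the strata $S_I$ and intersecting with the open $O_i$'s one recovers $(S\cap U)_{(k)}$.

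For the reduced claim I would work with the iterated singular-locus filtration $Z=\Sing_0(Z)\supset\Sing(Z)\supset\Sing_2(Z)\supset\cdots$ (\S\ref{sing}). Each $\Sing_j(Z)$ is $C$-analytic of strictly decreasing dimension, so the filtration terminates, and the strata $R_j:=\Sing_j(Z)\setminus\Sing_{j+1}(Z)=\Reg(\Sing_j(Z))$ are $C$-semianalytic (differences of $C$-analytic sets) and are real analytic manifolds whose dimension is locally constant on each connected component. Grouping the connected components of $R_j$ by their dimension (using \S\ref{bpcss}) gives $C$-semianalytic pieces $R_j^d$, whose closures $\cl(R_j^d)$ are again $C$-semianalytic. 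The claim then follows from the identity
\[
Z_{(\geq k)}=\bigcup_{j\geq 0}\bigcup_{d\geq k}\cl(R_j^d),
\]
which displays $Z_{(\geq k)}$ as a locally finite union of $C$-semianalytic sets.

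The main obstacle is the ``$\subseteq$'' inclusion in the above identity. A point $x\in R_{j_0}$ with $\dim Z_x\geq k$ but $\dim_x R_{j_0}<k$ must be shown to lie in the closure of some shallower stratum $R_j$ with $j<j_0$ of dimension $\geq k$, and this incidence is subtle in the real analytic setting---pathologies such as the negative-$z$ half-axis in Whitney's umbrella show that closures of higher strata may fail to reach certain points of lower strata. The cleanest route is to pass to a complexification $\widetilde{Z}$ of $Z$ (guaranteed by \ref{complexification}): decompose $\widetilde{Z}$ into its irreducible complex components (which are pure-dimensional), exploit that the complex local dimension strata of $\widetilde{Z}$ are $\sigma$-invariant complex analytic sets, and descend back to the real stratification via intersection with $M$.
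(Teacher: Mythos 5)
There is a genuine gap in your reduction step. The asserted formula
$\dim S_x=\max\{\dim Z(f_i)_x:\ x\in T_i\}$ is false: it ignores the germs at $x$ of the
pieces $T_j$ with $x\in\cl(T_j)\setminus T_j$, which can carry a strictly larger dimension.
Take $M=\R^2$, $T_1=\{0=0\}\cap\{y>0\}$ (so $Z(f_1)=\R^2$) and
$T_2=\{y=0\}\cap\{x>0\}$; at $x_0=(1,0)$ one has $x_0\in T_2$ but $x_0\notin T_1$, so your
formula gives $\dim Z(f_2)_{x_0}=\dim\{y=0\}_{x_0}=1$, yet $\dim S_{x_0}=2$ because
$(T_1)_{x_0}$ is the germ of a half-plane. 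The same error invalidates the statement that
``on $S_I$ the local dimension of $S$ equals $\max_{i\in I}\dim Z(f_i)_x$'', and with it
the entire reduction to the $C$-analytic case; the vague step ``by assembling these pieces
across the strata $S_I$\ldots'' is exactly where the missing boundary contributions would have
to be accounted for, and they cannot be read off the $Z(f_i)$ alone.

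Ironically, the portion you flag as the ``main obstacle'' is actually fine. The identity
$Z_{(\geq k)}=\bigcup_{j\geq 0}\bigcup_{d\geq k}\cl(R_j^d)$ does hold for a $C$-analytic
$Z$: one has $\dim Z_x=\max_j\dim(R_j)_x$, for $x\in R_{j_0}$ the strata with $j>j_0$ miss
a whole neighborhood of $x$, and if $\dim(R_j)_x=d\geq k$ then, the connected components of
$R_j$ being a locally finite family, some $d$-dimensional component $C$ satisfies
$x\in\cl(C)\subset\cl(R_j^d)$; the reverse inclusion is immediate. The Whitney umbrella
is no counterexample---at a point of the negative $z$-axis the local dimension is $1$,
so nothing needs to lie in $\cl(R_0^2)$ there. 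The true difficulty lies before this claim,
in the failed reduction.

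The paper avoids all of this by a downward recursion on dimension. It first isolates
$S_{(d)}$ for $d=\dim S$: for each basic piece $S_i$ of local dimension $d$ at $x$ it takes
the union $T_i$ of the $d$-dimensional connected components of
$S_i\setminus\Sing(\ol{S_i}^{\zar})$ (a $C$-semianalytic set by \ref{cc}) and uses
$\cl(T_i)$, so the boundary contributions that break your formula are absorbed into the
closures. It then passes to the strictly lower-dimensional $C$-semianalytic set
$S':=S\setminus S_{(d)}$ and uses $S_{(d')}=S'_{(d')}$ for $d'<d$ to iterate. This scheme
never requires expressing $\dim S_x$ in terms of the individual Zariski closures, which is
where your argument breaks down.
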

\begin{proof}
Let $d\leq\dim(M)$ be the dimension of $S$ and let us check that $S_{(d)}$ is $C$-semianalytic. 

Fix $x\in\cl(S_{(d)})$ and let $U^x\subset M$ be a neighborhood of $x$ such that $S\cap U^x=\bigcup_{i=1}^rS_i$ where each $S_i$ is a basic $C$-semianalytic set. Assume that $\dim(S_{i,x})=d$ exactly for $i=1,\ldots,\ell$. Let $T_i$ be the union of the connected components of $S_i\setminus\Sing(\ol{S_i}^{\zar})$ of dimension $d$. By \ref{cc} $T_i$ is a $C$-semianalytic set. The set of points of dimension $d$ of $S_i$ is $\cl(T_i)\cap S_i$. Thus,
$$
S_{(d)}\cap U^x=\bigcup_{i=1}^\ell\cl(T_i)\cap S_i
$$
is a $C$-semianalytic set, so $S_{(d)}$ is $C$-semianalytic.

Next, consider the $C$-semianalytic set $S':=S\setminus S_{(d)}$ and let $d'<d$ be the dimension of $S'$. Notice that $S_{(d')}=S'_{(d')}$, so $S_{(d')}$ is a $C$-semianalytic set. Proceeding recursively we conclude that each $S_{(k)}$ is $C$-semianalytic for $k\geq0$, as required.
\end{proof}

We end this section with a characterization of $C$-semianalytic sets of dimension $\leq k$ inspired by \cite[2.14]{bm} and \cite[17. Prop.7, pag. 60]{l1}.
 
\begin{prop}\label{lzc0}
Let $S\subset M$. Then $S$ is $C$-semianalytic of dimension $\leq k$ if and only if for each $x\in M$ there exist an open $C$-semianalytic neighborhood $U^x$ of $x$ in $M$ and a $C$-analytic set $Z$ of dimension $\leq k$ that contains $S\cap U^x$ such that $\cl(S\cap U^x)\setminus(S\cap U^x)$ and $(S\cap U^x)\setminus\Int_Z(S\cap U^x)$ are $C$-semianalytic sets of dimension $\leq k-1$.
\end{prop}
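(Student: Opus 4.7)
The proof splits into two implications. For the direct implication, fix $x\in M$ and use Lemma~\ref{cover} to choose an open basic $C$-semianalytic neighborhood $U^x=\{g_x>0\}$ of $x$ small enough that $A:=S\cap U^x$ is a global $C$-semianalytic set. Take $Z:=\ol{A}^{\zar}$, which is $C$-analytic and, by Lemma~\ref{dimgcss}, satisfies $\dim Z=\dim A\leq k$. By the properties in \ref{bpcss} both $\cl(A)\setminus A$ and $A\setminus\Int_Z(A)$ are $C$-semianalytic, and the classical frontier-dimension result for semianalytic sets gives $\dim(\cl(A)\setminus A)\leq\dim A-1\leq k-1$. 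To bound the dimension of $A\setminus\Int_Z(A)$, decompose $Z=M_k\cup(Z\setminus M_k)$, where $M_k:=\Reg(Z_{(k)})$ is an analytic manifold of dimension $\leq k$ and $Z\setminus M_k$ is a $C$-analytic set of dimension $\leq k-1$. For $p\in M_k$ there is a neighborhood $V$ of $p$ in $M$ with $V\cap Z=V\cap M_k$, hence $\Int_Z(A)\cap M_k=\Int_{M_k}(A\cap M_k)$; therefore $(A\setminus\Int_Z(A))\cap M_k$ is contained in the topological frontier of the semianalytic set $A\cap M_k$ in the $k$-manifold $M_k$, which has dimension $\leq k-1$, and the remaining part is contained in $Z\setminus M_k$, again of dimension $\leq k-1$.

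For the converse, it suffices to show that each $A:=S\cap U^x$ (with $U^x$, $Z$ as furnished by the hypothesis) is $C$-semianalytic, for then $S$ is $C$-semianalytic by condition~(2) of its definition, with $\dim S\leq k$ since each $A$ is contained in a $Z$ of dimension $\leq k$. Set $B:=\cl(A)\setminus A$ and $C:=A\setminus\Int_Z(A)$, both $C$-semianalytic of dimension $\leq k-1$ by hypothesis. Since $B\cap A=\varnothing$ while $C\subset A$, one has $B\cap C=\varnothing$, and a short computation gives
\[
B\cup C=\cl(A)\setminus\Int_Z(A),\qquad Z\setminus(B\cup C)=\bigl(Z\setminus\cl(A)\bigr)\sqcup\Int_Z(A).
\]
The left-hand side is $C$-semianalytic (as $Z$ is $C$-analytic and $B\cup C$ is $C$-semianalytic), while on the right we have a disjoint union of two sets, both open in $Z$.

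By Proposition~\ref{salvation}(i) (and the discussion in \ref{cc}), the family of connected components of $Z\setminus(B\cup C)$ is locally finite and each component is $C$-semianalytic. As the two open pieces $Z\setminus\cl(A)$ and $\Int_Z(A)$ are disjoint, every connected component of $Z\setminus(B\cup C)$ sits entirely in one of them; hence $\Int_Z(A)$ is a locally finite union of $C$-semianalytic sets and is itself $C$-semianalytic. Consequently $A=\Int_Z(A)\cup C$ is $C$-semianalytic, completing the proof. The main technical point is the direct implication, where one must exploit the minimality of the Zariski closure $Z=\ol{A}^{\zar}$ and the stratification by $M_k=\Reg(Z_{(k)})$ to force the dimension drop of $A\setminus\Int_Z(A)$; the converse, by contrast, follows cleanly from the disjoint-open decomposition above together with the connected-components result of Proposition~\ref{salvation}.
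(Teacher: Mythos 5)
Your proof is correct in both directions, and the converse is essentially the paper's own argument: you have in effect inlined the paper's Lemma~\ref{diff0} (with $T=Z$) by observing that $Z\setminus(B\cup C)$ splits as the disjoint union of the open pieces $Z\setminus\cl(A)$ and $\Int_Z(A)$ and then invoking the connected-components machinery (\ref{cc}/Proposition~\ref{salvation}). The genuine difference lies in the direct implication, where the crucial step is bounding $\dim\bigl(A\setminus\Int_Z(A)\bigr)$. You proceed by stratifying $Z$ into its smooth top-dimensional locus $M_k$ and a complement of dimension $\leq k-1$, reducing the estimate on $M_k$ to the fact that a semianalytic set with empty interior in a $k$-manifold has dimension $<k$. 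The paper instead uses the purely set-theoretic identity
\[
A\setminus\Int_Z(A)=\cl(Z\setminus A)\setminus(Z\setminus A),
\]
which exhibits the set as the frontier of the semianalytic set $Z\setminus A$ and lets one apply the same germ-level frontier estimate $\dim(\cl(T)_y\setminus T_y)<\dim(T_y)$ already used for $\cl(A)\setminus A$. This is shorter and avoids any stratification. It also sidesteps a real looseness in your write-up: you define $M_k:=\Reg(Z_{(k)})$, but $Z_{(k)}$ is only known to be $C$-semianalytic, not $C$-analytic, so $\Reg$ in the sense of \ref{sing} is not directly applicable; what you actually want is the open set of points of $Z$ that are smooth of local dimension $k$. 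Likewise, your assertion that $Z\setminus M_k$ is $C$-analytic is neither needed (only its dimension matters) nor clearly true. One more small point: the paper's `only if' argument explicitly shrinks $U^x$ so that $\dim(S_x)=\dim(S\cap U^x)=\dim(\ol{S\cap U^x}^{\zar})$ and $\cl(S)\cap U^x$ is a global $C$-semianalytic set — you implicitly rely on such a choice when invoking Lemma~\ref{dimgcss} and the frontier estimate for $\cl(A)\setminus A$, so it is worth stating.
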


The proof of Proposition \ref{lzc0} requires a preliminary result inspired by \cite[2.15]{bm}.

\begin{lem}\label{diff0}
Let $S\subset T\subset M$ where $T$ is $C$-semianalytic. Let $S_1:=\cl(S)\cap T$ (be the closure of $S$ in $T$) and let $S_2:=S\setminus\cl(T\setminus S)$ (be the interior of $S$ in $T$). Then $S$ is $C$-semianalytic if and only if $S_1\setminus S$ and $S\setminus S_2$ are $C$-semianalytic.
\end{lem}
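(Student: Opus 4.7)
The plan is to handle the two implications separately. The forward direction is a routine application of the closure properties of $C$-semianalytic sets established in Section \ref{bpcss}, while the reverse direction is the substantive one and rests on identifying the hypothesis data with the topological frontier of $S$ relative to $T$.

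For the forward implication, assume $S$ is $C$-semianalytic. Then $\cl(S)$ is $C$-semianalytic, so $S_1=\cl(S)\cap T$ is, being the intersection of two $C$-semianalytic sets. Symmetrically, $T\setminus S=T\cap(M\setminus S)$ is $C$-semianalytic, hence so is $\cl(T\setminus S)$, and therefore so is $S_2=S\setminus\cl(T\setminus S)$. The set-differences $S_1\setminus S$ and $S\setminus S_2$ are then $C$-semianalytic.

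For the reverse implication, I would introduce
\[
F:=(S_1\setminus S)\cup(S\setminus S_2),
\]
which is $C$-semianalytic by hypothesis. The first key step is the set-theoretic identity
\[
F=\cl(S)\cap\cl(T\setminus S)\cap T,
\]
so that $F$ is the topological frontier of $S$ in $T$. Indeed $S_1\setminus S=\cl(S)\cap(T\setminus S)\subset\cl(S)\cap\cl(T\setminus S)$, $S\setminus S_2=S\cap\cl(T\setminus S)\subset\cl(S)\cap\cl(T\setminus S)$, and any point of $T\cap\cl(S)\cap\cl(T\setminus S)$ lies either in $S$ (hence in $S\setminus S_2$) or in $T\setminus S$ (hence in $S_1\setminus S$). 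The second key step is the decomposition
\[
T\setminus F=S_2\sqcup(T\setminus S_1),
\]
using that $T\setminus\cl(T\setminus S)=S_2$ (every point of $T$ outside $\cl(T\setminus S)$ automatically lies in $S$) together with $T\setminus\cl(S)=T\setminus S_1$; the two summands are disjoint ($S_2\subset S$, $T\setminus S_1\subset T\setminus S$) and both open in $T$.

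I would finish as follows. By Section \ref{bpcss}, the connected components of the $C$-semianalytic set $T\setminus F$ form a locally finite family of $C$-semianalytic sets. Because $S_2$ and $T\setminus S_1$ are disjoint subsets of $T\setminus F$ open in $T$, each such component lies entirely in $S_2$ or entirely in $T\setminus S_1$; selecting the sub-family contained in $S$ exhibits $S_2$ as a locally finite union of $C$-semianalytic sets, hence $S_2$ is $C$-semianalytic, and therefore $S=S_2\cup(S\setminus S_2)$ is $C$-semianalytic. The main obstacle, really the decisive trick, is the identification $F=\cl(S)\cap\cl(T\setminus S)\cap T$, which converts the algebraic hypothesis into topological data from which $S_2$ is recovered via connectedness; I do not see a way to produce $S_2$ or $S_1$ directly from Boolean operations on the given $C$-semianalytic sets without passing through this topological description.
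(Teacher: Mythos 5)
Your proof is correct and follows essentially the same route as the paper's: you form $F=(S_1\setminus S)\cup(S\setminus S_2)$ (which, since $S_2\subset S\subset S_1$, is exactly the paper's $S_1\setminus S_2$), note that $T\setminus F$ decomposes as the disjoint union of the relatively open sets $S_2$ and $T\setminus S_1$, and recover $S_2$ as a union of connected components of the $C$-semianalytic set $T\setminus F$. The observation that $F$ is the topological frontier of $S$ in $T$ is a helpful gloss but not a genuinely different argument; the paper goes straight to the decomposition of $T\setminus(S_1\setminus S_2)$ without naming the frontier.
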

\begin{proof}
The `only if' implication is clear. To prove the `if', we proceed as follows: $T\setminus S_1$ and $S_2$ are disjoint open and closed subsets of their union $T\setminus(S_1\setminus S_2)$, which is $C$-semianalytic because $S_1\setminus S_2=(S_1\setminus S)\cup(S\setminus S_2)$ is a $C$-semianalytic set. Therefore, $S_2$ is $C$-semianalytic because it is the union of some of the connected components of $T\setminus(S_1\setminus S_2)$, which are $C$-semianalytic sets. Then $S=S_2\cup(S\setminus S_2)$ is $C$-semianalytic. 
\end{proof}

Recall that if $S$ is a semianalytic set, $\cl(S)_x=\cl(S_x)$ and $\dim(\cl(S_x)\setminus S_x)<\dim(S_x)$ for each $x\in\cl(S)$ (see \cite[VIII.2.11]{abr}). We are ready to prove Proposition \ref{lzc0}.

\begin{proof}[Proof of Proposition \em\ref{lzc0}]
We prove the `if' implication first. We have to check that $S\cap U^x$ is $C$-semianalytic of dimension $\leq k$ for all $x\in M$. By Lemma \ref{diff0} $S\cap U^x$ is $C$-semianalytic. On the other hand, $S\cap U^x=\Int_Z(S\cap U^x)\cup(S\cap U^x\setminus\Int_Z(S\cap U^x))$ and it has dimension $\leq k$ because $\Int_Z(S\cap U^x)\subset Z$ has dimension $\leq k$ and $S\cap U^x\setminus\Int_Z(S\cap U^x)$ has dimension $\leq k-1$.

We prove next the `only if' implication. We show first that \em each point $x\in M$ has an open $C$-semianalytic neighborhood $U^x\subset M$ such that $\dim(S_x)=\dim(S\cap U^x)=\dim(\ol{S\cap U^x}^{\zar})$ and $\cl(S)\cap U^x$ is a global $C$-semianalytic set\em. 

Let $U^x$ be an open neighborhood of $x$ in $M$ such that $S\cap U^x$ is a finite union of basic $C$-semianalytic sets $S_1,\ldots,S_r$. After shrinking $U^x$ we may assume $x\in\cl(S_i)$, $\cl(S)\cap U^x$ is a global $C$-semianalytic set and $\dim(S_i\cap U^x)=\dim(S_{i,x})$ for $i=1,\ldots,r$. We may assume that $U^x$ is in addition basic $C$-semianalytic, so by Lemma \ref{dimgcss} $\dim(\ol{S_i\cap U^x}^{\zar})=\dim(S_i\cap U^x)$. Consequently,
$$
\dim(S_x)=\max_{i=1,\ldots,r}\{\dim(S_{i,x})\}=\max_{i=1,\ldots,r}\{\dim(S_i\cap U^x)\}=\max_{i=1,\ldots,r}\{\ol{S_i\cap U^x}^{\zar}\}=\dim(\ol{S\cap U^x}^{\zar}),
$$
so $\dim(S_x)=\dim(S\cap U^x)=\dim(\ol{S\cap U^x}^{\zar})$.

Denote $Z:=\ol{S\cap U^x}^{\zar}$ and $A:=S\cap U^x$. We have $\dim(Z)=\dim(A)\leq k$ and 
$$
\dim(\cl(A)_y\setminus A_y)\leq k-1
$$ 
for each $y\in\cl(A)$. We claim: \em $\dim(A_y\setminus\Int_Z(A)_y)\leq k-1$ for each $y\in\cl(A)$\em. 

As $\Int_Z(A)=Z\setminus\cl(Z\setminus A)$,
$$
A\setminus\Int_Z(A)=A\cap(Z\setminus\Int_Z(A))=A\cap\cl(Z\setminus A)=\cl(Z\setminus A)\setminus(Z\setminus A).
$$
For each $y\in\cl(A)$ we have
$$
\dim(A_y\setminus\Int_Z(A)_y)=\dim(\cl((Z\setminus A)_y)\setminus(Z\setminus A)_y)<\dim((Z\setminus A)_y)\leq\dim(Z)\leq k.
$$
Thus, $\cl(A)\setminus A$ and $A\setminus\Int_Z(A)$ are $C$-semianalytic sets of dimension $\leq k-1$, as required.
\end{proof}

\section{Images of $C$-semianalytic sets under proper holomorphic maps}\label{s4}

The main purpose of this section is to prove Theorems \ref{properint} and \ref{finite76}. Before that we present two enlightening examples. The first one prove that a proper real analytic map with finite fibers may not admit a proper complex holomorphic extension to complexifications. The second one shows that Theorem \ref{properint}(iii) can be false if $X^\sigma\subsetneq F^{-1}(Y^\tau)$.

\begin{examples}
(i) Consider the map $f:\R^2\to\R^2,\ (x,y)\mapsto((x^2+y^2)x,(x^2+y^2)y)$. Observe that $f$ is an analytic map, with continuous inverse map: 
$$
f^{-1}:\R^2\to\R^2, (u,v)\mapsto
\begin{cases}
(\frac{u}{\sqrt[3]{u^2+v^2}},\frac{v}{\sqrt[3]{u^2+v^2}})&\text{if $(u,v)\neq(0,0)$,}\\
(0,0)&\text{if $(u,v)=(0,0)$.}
\end{cases}
$$
Thus, $f$ is a homeomorphism, so it is a proper analytic map with finite fibers. However, $f$ cannot be extended to a proper holomorphic map $F:U\to\C^2$ defined on an open neighborhood $U$ of $\R^2$ in $\C^2$. 

(ii) Let $Y:=\{x^2-zy^2=0\}\subset\C^3$ be the complex Whitney's umbrella endowed with the complex conjugation $\tau$, so $Y^\tau=\{x^2-zy^2=0\}\subset\R^3$. Let $X=\C^2$ endowed with the complex conjugation $\sigma$. Consider the finite holomorphic map
$$
F:X:=\C^2\to Y,\ (s,t)\mapsto(st,t,s^2).
$$
and let $Z=X^\sigma=\R^2$. Observe that $X^\sigma\subsetneq F^{-1}(Y^\tau)$ and $F(Z)=\{x^2-zy^2=0\}\setminus\{z<0\}\subset\R^3$, which is not a $C$-analytic subset of $Y^\tau$.
\end{examples}

\subsection{Local approach}\setcounter{paragraph}{0}
Let us prove Theorem \ref{finite76}.

\begin{proof}[Proof of Theorem \em \ref{finite76}]
Consider the sheaf $F_*(\an_X)$ of $\an_Y$-modules. Recall that if $V\subset Y$ is open, $H^0(V,F_*(\an_X))=H^0(F^{-1}(V),\an_X)$. Fix $y_0\in Y$ and write $F^{-1}(y_0):=\{x_1,\ldots,x_\ell\}$. By \cite[6.1.18]{jp}
$$
F_*(\an_X)_{y_0}=\prod_{i=1}^\ell\an_{X,x_i}=\prod_{i=1}^\ell\an(X_{x_i})
$$
and by \cite[6.3.5]{jp} $F_*(\an_X)$ is a coherent sheaf of $\an_Y$-modules. By Cartan's Theorem A there exist $H_1,\ldots,H_m\in H^0(Y,F_*(\an_X))=H^0(X,\an_X)=\an(X)$ such that $F_*(\an_X)_{y_0}$ is generated by $H_1,\ldots,H_m$ as a $\an_{Y,y_0}$-module. By \cite[VIII.4.4]{abr} we have the following diagram of faithfully flat homomorphisms between local excellent rings
$$
\xymatrix{
\an(X)_{\gtm_{x_i}}\ar@{^{(}->}[r]\ar@{^{(}->}[d]&\an(X_{x_i})\ar@{^{(}->}[d]\\
\widehat{\an(X)_{\gtm_{x_i}}}\ar[r]^{\cong}&\widehat{\an(X_{x_i})}
}
$$
Consider the inclusion of $F^*(\an(Y)_{\gtn_{y_0}})$-modules
$$
M_1:=\sum_{i=1}^mH_i\cdot F^*(\an(Y)_{\gtn_{y_0}})\subset M_2:=F^*(\an(Y)_{\gtn_{y_0}})[H_1,\ldots,H_m]\subset\Ss^{-1}(\an(X)).
$$
Recall that $\Ss:=\an(X)\setminus(\gtm_{x_1}\cup\cdots\cup\gtm_{x_\ell})$.

\paragraph{}We have to prove: $M_1=M_2=B:=\Ss^{-1}(\an(X))$.

In what follows, completions of rings are considered with respect to the Jacobson ideal. In case the involved ring is local, its Jacobson ideal coincides with its unique maximal ideal. The maximal ideals of the semi-local ring $B$ are $\gtm_{x_1}B,\ldots,\gtm_{x_\ell}B$. By \cite[24.C, p. 174]{m} the completion of $B$ satisfies $\widehat{B}\cong\prod_{i=1}^\ell\widehat{B_{\gtm_{x_i}B}}$. Observe that $B_{\gtm_{x_i}B}\cong\an(X)_{\gtm_{x_i}}$, so 
\begin{align*}
&B\hookrightarrow\prod_{i=1}^\ell B_{\gtm_{x_i}B}\cong\prod_{i=1}^\ell\an(X)_{\gtm_{x_i}}\hookrightarrow\prod_{i=1}^\ell\an(X_{x_i})=F_*(\an_X)_{y_0},\\
&\widehat{B_{\gtm_{x_i}B}}\cong\widehat{\an(X)_{\gtm_{x_i}}}\cong\widehat{\an(X_{x_i})}. 
\end{align*}
By \cite[24.C, p. 174]{m} the completion of the semi-local ring $F_*(\an_X)_{y_0}$ is
$$
\widehat{F_*(\an_X)_{y_0}}\cong\prod_{i=1}^\ell\widehat{\an(X_{x_i})}\cong\prod_{i=1}^r\widehat{B_{\gtm_{x_i}B}}\cong\widehat{B}.
$$

As $F_*(\an_X)_{y_0}$ is a finitely generated $\an_{Y,y_0}$-module, the completion of $F_*(\an_X)_{y_0}$ with respect to the maximal ideal of $\an_{Y,y_0}$ is by \cite[23.K-L, Thm.55, p.170]{m} 
$$
\widehat{F_*(\an_X)_{y_0}}\cong F_*(\an_X)_{y_0}\otimes_{\an_{Y,y_0}}\widehat{\an_{Y,y_0}}. 
$$
Recall that $F_*(\an_X)_{y_0}$ is generated by $H_1,\ldots,H_m$ as a $\an_{Y,y_0}$-module, so $\widehat{F_*(\an_X)_{y_0}}$ is generated by $H_1,\ldots,H_m$ as a $\widehat{\an_{Y,y_0}}$-module. Again by \cite[23.K-L, Thm.55, p.170]{m} the completion of $M_1$ with respect to the maximal ideal of $\an(Y)_{\gtn_{y_0}}$ is 
$$
\widehat{M_1}\cong M_1\otimes_{\an(Y)_{\gtn_{y_0}}}\widehat{\an(Y)_{\gtn_{y_0}}}.
$$
As $M_1$ is generated by $H_1,\ldots,H_m$ as a $\widehat{\an(Y)_{\gtn_{y_0}}}$-module, $\widehat{M_1}$ is generated by $H_1,\ldots,H_m$ as a $\widehat{\an(Y)_{\gtn_{y_0}}}$-module. Consequently, since $\widehat{\an(Y)_{\gtn_{y_0}}}\cong\widehat{\an_{Y,y_0}}$, we conclude $\widehat{M_1}\cong\widehat{F_*(\an_X)_{y_0}}$. As the inclusion $\an(Y)_{\gtn_{y_0}}\hookrightarrow\widehat{\an(Y)_{\gtn_{y_0}}}$ is faithfully flat because $\an(Y)_{\gtn_{y_0}}$ is a local excellent ring, we have the following commutative diagram of $\widehat{\an(Y)_{\gtn_{y_0}}}$-modules
$$
\xymatrix{
M_1\otimes_{\an(Y)_{\gtn_{y_0}}}\widehat{\an(Y)_{\gtn_{y_0}}}\ar@{^{(}->}[r]\ar@{<->}^\cong[d]& M_2\otimes_{\an(Y)_{\gtn_{y_0}}}\widehat{\an(Y)_{\gtn_{y_0}}}\ar@{^{(}->}[r]&B\otimes_{\an(Y)_{\gtn_{y_0}}}\widehat{\an(Y)_{\gtn_{y_0}}}\ar@{^{(}->}[d]\\
\widehat{M_1}\ar@{<->}^(0.425)\cong[r]&\widehat{F_*(\an_X)_{y_0}}\ar@{<->}^(0.35)\cong[r]&F_*(\an_X)_{y_0}\otimes_{\an_{Y,y_0}}\widehat{\an_{Y,y_0}}
}
$$
Thus, all the inclusion are in fact isomorphisms, so
$$
M_1\otimes_{\an(Y)_{\gtn_{y_0}}}\widehat{\an(Y)_{\gtn_{y_0}}}=M_2\otimes_{\an(Y)_{\gtn_{y_0}}}\widehat{\an(Y)_{\gtn_{y_0}}}=B\otimes_{\an_{Y,y_0}}\widehat{\an_{Y,y_0}}.
$$
As the inclusion $\an(Y)_{\gtn_{y_0}}\hookrightarrow\widehat{\an(Y)_{\gtn_{y_0}}}$ is faithfully flat, $M_1=M_2=B$, as required.
\end{proof}
\begin{remark}\label{finite76r}
If $F:(X,\an_X)\to(Y,\an_Y)$ is in addition invariant, we assume that each $H_i$ is invariant after taking $\Re(H_i)$ and $\Im(H_i)$ instead of $H_i$.
\end{remark}

\subsection{Global approach}\label{bmp}\setcounter{paragraph}{0}
A key result to prove Theorem \ref{properint} is \cite[Thm.2.2]{bm} that we translate next to our situation. Let $(X,\an_X)$ be a reduced Stein space endowed with an anti-involution $\sigma$ such that its fixed part space $X^\sigma$ is non-empty and let $\u:=(\u_1,\ldots,\u_m)$, where $m\geq0$, be a tuple of variables. Denote either $\an(X^\sigma)$ or ${\mathcal A}(X^\sigma)$ with ${\mathscr A}$.

\begin{thm}\label{bm}
Let $S\subset X^\sigma\times\R^m$ be an ${\mathscr A}[\u]$-definable global $C$-semianalytic set and consider the projection $\pi:E\times\R^k\to\R^k,\ (x,u)\mapsto x$ onto the first factor. Then $\pi(S)$ is an ${\mathscr A}$-definable global $C$-semianalytic set.
\end{thm}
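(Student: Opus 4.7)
The plan is to reduce the statement to the case where $X$ is a complex analytic subset of $\C^n$ via the transfer tools of Lemmas \ref{excel-conv} and \ref{transfer}, and then to invoke the projection theorem of Bierstone-Milman \cite[Thm.2.2]{bm}, whose excellence hypotheses are precisely what Corollary \ref{excellpol} provides. (I read the statement as $\pi:X^\sigma\times\R^m\to X^\sigma$, correcting the obvious typo.)

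\textbf{Step 1: Reduction to $X^\sigma\subset\R^n$.} By Lemma \ref{excel-conv} there is an invariant, injective, proper holomorphic map $\varphi:X\to\C^n$ onto a reduced complex analytic subset $Z$ of $\C^n$ whose pullback identifies ${\mathcal A}(Z)_{\gtn_{z_0}'}$ and $\an(Z)_{\gtm_{z_0}'}$ with ${\mathcal A}(X^\sigma)_{\gtn_{x_0}}$ and $\an(X^\sigma)_{\gtm_{x_0}}$, for every $x_0\in X^\sigma$ and $z_0:=\varphi(x_0)$. Applied to the product map $\varphi\times\id_{\R^m}:X^\sigma\times\R^m\to Z^{\ol{\,\cdot\,}}\times\R^m\subset\R^n\times\R^m$, Lemma \ref{transfer} rewrites $S$ locally in terms of elements of $\an(\R^n)[\u]$ or ${\mathcal A}(\R^n)[\u]$, and $\pi(S)$ corresponds under $\varphi$ to the projection of the transferred set onto the first factor. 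Hence it suffices to prove the statement under the assumption $X^\sigma\subset\R^n$, with ${\mathscr A}\in\{\an(\R^n),{\mathcal A}(\R^n)\}$.

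\textbf{Step 2: Projection via Bierstone-Milman.} Under this reduction, Corollary \ref{excellpol} shows that the localizations $(\an(\R^n)_{\gtm_{x_0}}[\u])_{\gtm_{(x_0,u_0)}}$ and $({\mathcal A}(\R^n)_{\gtn_{x_0}}[\u])_{\gtn_{(x_0,u_0)}}$ are excellent rings, with common completion $\widehat{\an_{\R^n\times\R^m,(x_0,u_0)}}$, and that the natural maps to $\an_{\R^n\times\R^m,(x_0,u_0)}$ are regular and faithfully flat. This is exactly the framework in which \cite[Thm.2.2]{bm} establishes that the projection of a global semianalytic set along the $\u$-coordinates is again global semianalytic: the proof there relies only on excellence (for Weierstrass preparation and N\'eron-Artin-type approximation) together with the classical Tarski-Seidenberg principle, and none of these ingredients distinguishes between $\an(\R^n)$ and ${\mathcal A}(\R^n)$.

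\textbf{Main obstacle.} The essential point to verify is that the defining data produced for $\pi(S)$ lie in ${\mathscr A}$ \emph{globally}, not merely in a localization or completion. Bierstone and Milman's construction outputs the defining functions of $\pi(S)$ as explicit polynomial expressions (resultants, subresultants and their real-analytic refinements) in the data defining $S$; since $S$ is by hypothesis described by finitely many elements of ${\mathscr A}[\u]$, these outputs automatically lie in ${\mathscr A}$. The statement is then global $C$-semianalytic in $\R^n$, and pulling back via $\varphi^*$ transports it to an ${\mathscr A}$-definable global $C$-semianalytic subset of $X^\sigma$, completing the proof.
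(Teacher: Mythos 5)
The paper supplies no proof here: Theorem \ref{bm} is presented simply as ``a translation'' of \cite[Thm.2.2]{bm}, with the excellence and regularity hypotheses of that theorem supplied by Lemma \ref{excell}, Corollary \ref{excellpol}, and the corollary following Lemma \ref{excel-conv} (which establish excellence of $\an(X^\sigma)_{\gtm_{x_0}}$, ${\mathcal A}(X^\sigma)_{\gtn_{x_0}}$ and of the corresponding polynomial extensions, together with regularity of the maps to the local analytic rings). Your proposal instead routes through an explicit reduction to $\R^n$, which is in the same spirit but is not quite what the paper does.

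The reduction in your Step 1 has a genuine gap. Lemma \ref{transfer} is inherently local: it works in a neighborhood of a prescribed finite set of points, and its conclusion is only an identity on $\{(h\circ\varphi)^2>0\}$ after multiplication by a suitable factor $h\in\Tt$. The statement to be proved, however, concerns a \emph{global} $C$-semianalytic set $S$ with finitely many defining data $f_{ij}\in{\mathscr A}[\u]$, and you need to produce finitely many $g_{ij}\in\an(\R^n)[\u]$ (or ${\mathcal A}(\R^n)[\u]$) that define $(\varphi\times\id)(S)$ globally in $\R^n\times\R^m$. That is not available: the homomorphism $\varphi^*:\an(Z)\to\an(X)$ is an isomorphism only after localizing at a fixed point $x_0$ (Lemma \ref{excel-conv}(ii)--(iii)); globally it is merely injective, so elements of $\an(X^\sigma)$ need not be pullbacks of elements of $\an(\R^n)$. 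Hence the sentence ``it suffices to prove the statement under the assumption $X^\sigma\subset\R^n$'' is not justified by Lemmas \ref{excel-conv} and \ref{transfer}. What you would need instead is either a global identification of $\an(X^\sigma)$ (resp.~${\mathcal A}(X^\sigma)$) with a quotient of $\an(\R^n)$ (resp.~${\mathcal A}(\R^n)$) under the embedding --- which is the content of the corollary after Lemma \ref{excel-conv} --- or to apply \cite[Thm.2.2]{bm} directly to the ring ${\mathscr A}$ once its localizations are shown to be excellent with regular maps to stalk rings. Finally, the characterization in Step 3 of the Bierstone--Milman construction as ``resultants, subresultants and their real-analytic refinements'' is an oversimplification: their argument uses Weierstrass preparation and N\'eron--Artin approximation, and the reason the outputs lie in ${\mathscr A}$ is precisely that their theorem is stated in terms of $A$-definability under the excellence axioms, not because the outputs are literally polynomial combinations of the inputs.
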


We also need the following application of M. Artin's approximation theorem, that we include in detail for the sake of completeness.
\begin{lem}\label{artin}
Let $X_0\subset\C^n_0$ and $Y_0\subset\C^m_0$ be complex analytic set germs at the origin. Let $\varphi:\an(X_0)\to\an(Y_0)$ be a local analytic homomorphism such that the extension to the completions
$\widehat{\varphi}:\widehat{\an(X_0)}\to\widehat{\an(Y_0)}$ is an isomorphism. Then $\varphi$ is also an isomorphism.
\end{lem}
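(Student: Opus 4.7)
The plan is to split the claim into injectivity and surjectivity of $\varphi$, handling the first via faithful flatness of completion and the second via M.~Artin's approximation theorem.

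For injectivity, both canonical maps $\an(X_0)\hookrightarrow\widehat{\an(X_0)}$ and $\an(Y_0)\hookrightarrow\widehat{\an(Y_0)}$ are injective (faithful flatness of completion of Noetherian local rings). The commutative square formed by $\varphi$ and $\widehat{\varphi}$ together with these embeddings then shows that the composition $\an(X_0)\to\widehat{\an(Y_0)}$ is injective, forcing $\varphi$ to be injective.

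For surjectivity, pick coordinate generators $y_1,\ldots,y_m$ of $\gtm_{\an(Y_0)}$; it suffices to exhibit $g_j\in\an(X_0)$ with $\varphi(g_j)=y_j$, since the image of $\varphi$ will then contain $\C+\gtm_{\an(Y_0)}=\an(Y_0)$. Present $\an(X_0)=\C\{x\}/I$ and $\an(Y_0)=\C\{y\}/J$ with $J=(Q_1,\ldots,Q_t)$, pick lifts $f_i\in\C\{y\}$ of $\varphi(x_i)$, and seek $g_j$ as the class of some $G_j\in\C\{x\}$. The requirement $\varphi(g_j)=y_j$ translates into the finite analytic system
\[
G_j\bigl(f_1(y),\ldots,f_n(y)\bigr)-y_j=\sum_{k=1}^{t}\lambda_{jk}(y)\,Q_k(y)\quad\text{in }\C\{y\},\qquad j=1,\ldots,m,
\]
with unknowns $G_j\in\C\{x\}$ and $\lambda_{jk}\in\C\{y\}$. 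A formal solution is immediate from the hypothesis: lift $\widehat{\varphi}^{-1}(y_j)$ to $\widehat{G}_j\in\C[[x]]$, and then the membership $\widehat{G}_j(f(y))-y_j\in\widehat{J}\,\C[[y]]$ provides the corresponding $\widehat{\lambda}_{jk}\in\C[[y]]$. M.~Artin's approximation theorem converts this formal solution to a convergent one, and the classes $g_j:=G_j\bmod I$ are the desired preimages.

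The main technical obstacle is that the unknowns $G_j\in\C\{x\}$ and $\lambda_{jk}\in\C\{y\}$ a priori belong to different convergent power series rings, whereas the classical form of Artin's theorem produces solutions whose components lie in a single ring. One overcomes this by embedding the system into $\C\{x,y\}$, introducing auxiliary unknowns $\mu_{ji}\in\C\{x,y\}$ to absorb the substitution $x\mapsto f(y)$,
\[
G_j(x)-y_j-\sum_{k=1}^{t}\lambda_{jk}(y)Q_k(y)-\sum_{i=1}^{n}\mu_{ji}(x,y)\bigl(x_i-f_i(y)\bigr)=0,
\]
whose formal solvability over $\C[[x,y]]$ is guaranteed by the previous step, and then invoking Artin's approximation in its nested form, available for this linear system over the excellent henselian local ring $\C\{x,y\}$, to guarantee that the convergent solution respects the prescribed variable dependence of $G_j$ and $\lambda_{jk}$.
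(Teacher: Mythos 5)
Your injectivity argument is fine and is actually more explicit than the paper's (the paper recovers injectivity automatically by exhibiting a two-sided inverse), but the surjectivity step has a genuine gap in the form of Artin approximation you invoke.

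The reduction to ``find $g_j\in\an(X_0)$ with $\varphi(g_j)=y_j$'' is a reasonable target, but the way you translate it produces a system
\[
G_j(x)-y_j-\textstyle\sum_k\lambda_{jk}(y)Q_k(y)-\sum_i\mu_{ji}(x,y)\bigl(x_i-f_i(y)\bigr)=0
\]
whose unknowns $G_j$, $\lambda_{jk}$, $\mu_{ji}$ are constrained to lie in three \emph{different} power series rings $\C\{x\}$, $\C\{y\}$, $\C\{x,y\}$. This is precisely the \emph{nested} (or ``strongly constrained'') Artin approximation problem, and it is not covered by the classical Artin theorem. Worse, in the convergent category nested Artin approximation is known to \emph{fail} in general: Gabrielov's counterexample produces an analytic homomorphism $\C\{x\}\to\C\{y\}$ and a formal $\widehat g\in\C[[x]]$ killed by the completed map that cannot be approximated by any convergent $g\in\C\{x\}$ with the same property --- a system of exactly the shape you wrote, with a chain of nested constraints $\{x\}\subset\{x,y\}$. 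Invoking ``nested Artin in its linear form'' therefore cannot be treated as a black box: if a nested statement strong enough to settle your system were available, the lemma would be an immediate consequence of it, and that is circular as an argument.

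The paper's proof sidesteps the nestedness entirely by reversing the direction of the substitution. Rather than seeking $G_j\in\C\{x\}$ with $G_j(f_1(y),\dots,f_n(y))\equiv y_j\pmod J$ (an equation in $\C\{y\}$ with an unknown from $\C\{x\}$), it sets $G_i:=\varphi(x_i)\in\an(Y_0)$ and seeks $F_1,\dots,F_m\in\an(X_0)$ solving
\[
G_i(F_1,\dots,F_m)=x_i,\qquad i=1,\dots,n,
\]
together with the auxiliary membership conditions $Q_k(F_1,\dots,F_m)\in I$. All unknowns (the $F_j$ and the multipliers expressing membership in $I$) now live in the single ring $\C\{x\}$, the equations are a finite analytic system $f(x,\tt y)=0$ in that ring, the formal solution $\eta_j:=\widehat\varphi^{-1}(y_j)$ exists by hypothesis, and the \emph{classical} Artin approximation theorem over the excellent henselian local ring $\C\{x\}$ applies with no nesting. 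The convergent $F_j$ then define $\psi(H):=H(F_1,\dots,F_m)$; one checks $\psi\circ\widehat\varphi=\mathrm{id}$, hence $\psi=\widehat\varphi^{-1}$, and its restriction to $\an(Y_0)$ is the inverse of $\varphi$, giving injectivity and surjectivity at once. A secondary, minor imprecision in your write-up: from $y_j\in\operatorname{im}\varphi$ alone it does not follow that $\operatorname{im}\varphi\supset\gtm_{\an(Y_0)}$; you need the additional (true) observation that a local analytic homomorphism commutes with substitution of convergent power series, so that $H(y)=\varphi\bigl(H(g_1,\dots,g_m)\bigr)$ for every $H\in\C\{y\}$. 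But the decisive issue is the nested Artin step.
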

\begin{proof}
Denote the variables of $\C^n$ with $(x_1,\ldots,x_n)$ and the variables of $\C^m$ with $(y_1,\ldots,y_m)$. Let $G_i:=\varphi(x_i)$ for $i=1,\ldots,n$. By the universal property of local homomorphisms between power series rings, $\widehat{\varphi}$ is given by
$$
\widehat{\varphi}:\widehat{\an(X_0)}\to\widehat{\an(Y_0)},\ \zeta\mapsto\zeta(G_1,\ldots,G_n).
$$
In particular, $\varphi:\an(X_0)\to\an(Y_0),\ F\mapsto F(G_1,\ldots,G_n)$. Denote $\eta_j:=(\widehat{\varphi})^{-1}(y_j)\in\widehat{\an(X_0)}$ for $j=1,\ldots,m$. As before, $(\widehat{\varphi})^{-1}$ is given by
$$
(\widehat{\varphi})^{-1}:\widehat{\an(Y_0)}\to\widehat{\an(X_0)},\ \xi\mapsto\xi(\eta_1,\ldots,\eta_n).
$$
In particular, the tuple $(\eta_1,\ldots,\eta_m)\in(\widehat{\an(X_0)})^m$ is a solution of the system
\begin{equation}\label{system}
\begin{cases}
G_1(\y_1,\ldots,\y_m)=x_1,\\
\hspace{1.75cm}\vdots\\
G_n(\y_1,\ldots,\y_m)=x_m.
\end{cases}
\end{equation}
By M. Artin's approximation theorem, there exist a solution $(F_1,\ldots,F_m)\in\an(X_0)^m$ of \eqref{system}. Consider the local homomorphism
$$
\psi:\widehat{\an(Y_0)}\to\widehat{\an(X_0)},\ \xi\mapsto\xi(F_1,\ldots,F_m),
$$
that satisfies $\psi\circ\widehat{\varphi}(x_i)=\psi(G_i)=x_i$. Thus, $\psi\circ\widehat{\varphi}=\id_{\widehat{\an(X_0)}}$, so $\psi=(\widehat{\varphi})^{-1}$. It follows that 
$$
\psi|_{\an(Y_0)}:\an(Y_0)\to\an(X_0),\ H\mapsto H(F_1,\ldots,F_m)
$$ 
is the inverse of $\varphi$, so $\varphi$ is an isomorphism, as required.
\end{proof}

Next, we prove Theorem \ref{properint}. 

\begin{proof}[Proof of Theorem \em \ref{properint}]
Recall that $F:(X,\an_X)\to(Y,\an_Y)$ is an invariant proper holomorphic map and $S\subset X^\sigma$ is an ${\mathcal A}(X^\sigma)$-definable $C$-semianalytic set. As $F$ is proper, we may assume for the whole proof that $X$ is irreducible and $F$ is surjective because if $\{X_\alpha\}_\alpha$ is the locally finite family of the irreducible components of $X$, it holds that $\{Y_\alpha:=F(X_\alpha)\}_\alpha$ is by Remmert's Theorem a locally finite family of irreducible complex analytic subsets of $Y$.

We prove together (i) and (ii). We have already denoted $E:=\cl(F^{-1}(Y^\tau)\setminus X^\sigma)$. Fix a point $y_0\in Y^\tau$. We have to show: \em There exists an open neighborhood $A\subset Y^\tau$ of $y_0$ such that $F(S)\cap A$ and $F(E\cap S)\cap A$ are global $C$-semianalytic sets\em.

The proof is conducted in several steps:
 
\paragraph{}\label{prep}
Write $F^{-1}(y_0):=\{x_1,\ldots,x_\ell\}$ and let $\Ss:=\an(X)\setminus(\gtm_{x_1}\cup\cdots\cup\gtm_{x_\ell})$. By Theorem \ref{finite76} and Remark \ref{finite76r} there exist invariant $H_1,\ldots,H_m\in\an(X)$ such that 
$$
\Ss^{-1}(\an(X))=F^*(\an(Y)_{\gtn_{y_0}})[H_1,\ldots,H_m].
$$
Consider the evaluation epimorphism
$$
\theta:\an(Y)_{\gtn_{y_0}}[\z_1,\ldots,\z_m]\to\Ss^{-1}(\an(X)),\ Q(y,\z_1,\ldots,\z_m)\mapsto Q(F(x),H_1(x),\ldots,H_m(x))
$$
that maps $y$ to $F(x)$ and the variables $\z_j$ to $H_j(x)$.

As $X$ is irreducible, $\Ss^{-1}(\an(X))$ is an integral domain. By the first isomorphism theorem there exist a prime ideal $\gtp$ of $\an(Y)_{\gtn_{y_0}}[\z_1,\ldots,\z_m]$ and an isomorphism 
$$
\ol{\theta}:\an(Y)_{\gtn_{y_0}}[\z_1,\ldots,\z_m]/\gtp\to\Ss^{-1}(\an(X)),\ Q(\z_1,\ldots,\z_m)+\gtp\mapsto Q(F(x),H_1(x),\ldots,H_m(x)).
$$
As the ring $\an(Y)_{\gtn_{y_0}}[\z_1,\ldots,\z_m]$ is noetherian because $\an(Y)_{\gtn_{y_0}}$ is excellent, $\gtp$ is finitely generated. Let $P_1,\ldots,P_t\in\an(Y)[\z_1,\ldots,\z_m]$ be a system of generators of $\gtp$. Let 
$$
\Omega:=\{(y,z)\in Y\times\C^m: P_1(y,z)=0,\ldots,P_t(y,z)=0\}\subset Y\times\C^m
$$ 
and consider 
$$
H:X\to\Omega\subset Y\times\C^m,\ x\mapsto(F(x),H_1(x),\ldots,H_m(x)),
$$
which satisfies $\ol{\theta}=H^*$. We use implicitly the irreducibility of $X$ here to assure that $H(X)\subset\Omega$. Denote $\omega_i:=H(x_i)$ and observe that the maximal ideals of the semi-local ring $A:=\an(Y)_{\gtn_{y_0}}[\z_1,\ldots,\z_m]/\gtp$ are the maximal ideals $\gtn_{\omega_i}$ of $A$ associated to the point $\omega_i$ for $i=1,\ldots,\ell$. Denote $B:=\Ss^{-1}(\an(X))$ and observe that $H^*$ induces a isomorphism between the local rings $A_{\gtn_{\omega_i}}$ and $B_{\gtm_{x_i}B}\cong\an(X)_{\gtm_{x_i}}$, where $\gtm_{x_i}$ is the maximal ideal of $\an(X)$ associated to the point $x_i$ for $i=1,\ldots,\ell$. Thus, $H^*$ induces an isomorphism $\widehat{H^*}:\widehat{A_{\gtn_{\omega_i}}}\to\widehat{B_{\gtm_{x_i}B}}$ between the corresponding completions. As $(X,\an_X)$ and $(Y,\an_Y)$ are reduced Stein spaces, $\widehat{\an(X)_{\gtm_{x_i}}}$ coincides with $\widehat{\an(X_{x_i})}$ and $\widehat{A_{\gtn_{\omega_i}}}$ coincides with $\widehat{\an(\Omega_{\omega_i})}$. Thus, we have the following isomorphism
$$
\xymatrix{
\widehat{\an(\Omega_{\omega_i})}\cong\widehat{A_{\gtn_{\omega_i}}}\ar[rr]^(0.4){\widehat{H^*}=\widehat{\ol{\theta}}}_(0.4)\cong&&\widehat{B_{\gtm_{x_i}B}}\cong\widehat{\an(X)_{\gtm_{x_i}}}\cong\widehat{\an(X_{x_i})}
}
$$
for $i=1,\ldots,\ell$. By Lemma \ref{artin}, the homomorphism $\an(\Omega_{\omega_i})\to\an(X_{x_i}),\ G_{\omega_i}\mapsto G_{\omega_i}\circ H_{x_i}$ is an isomorphism for $i=1,\ldots,\ell$. Consequently, there exist invariant open neighborhoods $U$ of $\{x_1,\ldots,x_\ell\}$ in $X$ and $V$ of $\{\omega_1,\ldots,\omega_\ell\}$ in $\Omega$ such that $H^{-1}(V)=U$ and $H|_U:U\to V$ is an invariant complex analytic diffeomorphism.

\paragraph{} As each $H_i$ is invariant, also $H$ is invariant, so $H(X^\sigma)\subset\Omega^{\tau'}:=\Omega\cap(Y^\tau\times\R^m)$, where 
$$
\tau':Y\times\C^m\to Y\times\C^m,\ (y,z)\mapsto(\tau(y),\ol{z})
$$ 
is an anti-involution. Consider the restriction maps $f:=F|_{X^\sigma}:X^\sigma\to Y^\tau$ and $h:=H|_{X^\sigma}:X^\sigma\to\Omega^{\tau'}$. As $S$ is ${\mathcal A}(X^\sigma)$-definable, there exist an open neighborhood $W\subset U\cap X^\sigma$ of the finite set $f^{-1}(y_0)=F^{-1}(y_0)\cap X^\sigma$ such that $S\cap W$ is an ${\mathcal A}(X^\sigma)$-definable global $C$-semianalytic set, that is, $S\cap W=\bigcup_{i=1}^r\bigcap_{j=1}^sS_{ij}$ where $S_{ij}$ is either 
\begin{equation}\label{accor}
\{x\in X^\sigma:\ f_{ij}(x)>0\}\quad\text{or}\quad\{x\in X^\sigma:\ f_{ij}(x)=0\}. 
\end{equation}
and $f_{ij}:=F_{ij}|_{X^\sigma}$. In addition $h|_W:W\to h(W)$ is a real analytic diffeomorphism, $h^{-1}(h(W))=W$ and $h(W)$ is an open neighborhood of $h(f^{-1}(y_0))$ in $\Omega^{\tau'}$. As $\ol{\theta}=H^*$ is an isomorphism, we may assume by Lemma \ref{transfer} 
\begin{itemize}
\item $f_{ij}=q_{ij}\circ h$ for some $q_{ij}\in\an(Y^\tau)[\u_1,\ldots,\u_m]$,
\item $h(S\cap W)=T$ where $T:=\bigcup_{i=1}^r\bigcap_{j=1}^sT_{ij}\subset\Omega^{\tau'}$ and $T_{ij}$ is either 
$$
\{(y,u)\in\Omega^{\tau'}:\ q_{ij}(y,u)>0\}\quad\text{or}\quad\{(y,u)\in\Omega^{\tau'}:\ q_{ij}(y,u)=0\} 
$$
accordingly to the choices of signs done in \eqref{accor}. 
\end{itemize}
Let $\pi:Y^\tau\times\R^m\to Y^\tau, (y,u)\mapsto y$ be the projection onto the first factor. As $f=\pi\circ h$, we conclude by Theorem \ref{bm} $f(S\cap W)=\pi(h(S\cap W))=\pi(T)$ is a global $C$-semianalytic set.

\paragraph{}Observe that 
\begin{multline*}
H(F^{-1}(Y^\tau)\setminus X^\sigma)=\{(y,u+\sqrt{-1}v)\in Y^\tau\times\C^m:\\ 
P_1(y,u+\sqrt{-1}v)=0,\ldots,P_t(y,u+\sqrt{-1}v)=0, v_1^2+\cdots+v_m^2\neq0\}.
\end{multline*}
Thus, we can understand $H(F^{-1}(Y^\tau)\setminus X^\sigma)$ as an $\an(Y^\tau)[\u,{\tt v}]$-definable global $C$-semianalytic subset of $Y^\tau\times\R^m\times\R^m$ where $\u:=(\u_1,\ldots,\u_m)$ and ${\tt v}:=({\tt v}_1,\ldots,{\tt v}_m)$. Let $V'\subset V\cap(Y^\tau\times\R^m\times\R^m)$ be a bounded open $\an(Y^\tau)[\u,{\tt v}]$-definable global $C$-semianalytic neighborhood of $\{\omega_1,\ldots,\omega_\ell\}$. By Proposition \ref{salvation}
$$
\cl(H(F^{-1}(Y^\tau)\setminus X^\sigma)\cap V')
$$
is an $\an(Y^\tau)[\u,{\tt v}]$-definable global $C$-semianalytic subset of $Y^\tau\times\R^m\times\R^m$. Thus, as $h(S\cap W)\subset\{{\tt v}_1=0,\ldots,{\tt v}_m=0\}$,
$$
\cl(H(F^{-1}(Y^\tau)\setminus X^\sigma)\cap V')\cap h(S\cap W)
$$
is an $\an(Y^\tau)[\u]$-definable global $C$-semianalytic subset of $Y^\tau\times\R^m$. We may assume $W\subset H^{-1}(V')$. As $h|_W:W\to h(W)$ is a real analytic diffeomorphism,
$$
h(E\cap S\cap W)=h(\cl(F^{-1}(Y^\tau)\setminus X^\sigma)\cap S\cap W)=h(\cl(F^{-1}(Y^\tau)\setminus X^\sigma)\cap W)\cap h(S\cap W).
$$
In addition, as $H|_U:U\to V$ is an analytic diffeomorphism and $H^{-1}(V)=U$, it holds
\begin{equation*}
\begin{split}
h(\cl(F^{-1}(Y^\tau)\setminus X^\sigma)\cap W)&=H(\cl(F^{-1}(Y^\tau)\setminus X^\sigma)\cap H^{-1}(V')\cap X^\sigma\cap W)\\
&=H(\cl((F^{-1}(Y^\tau)\setminus X^\sigma)\cap H^{-1}(V'))\cap H^{-1}(V'))\cap H(X^\sigma\cap W)\\
&=\cl(H(F^{-1}(Y^\tau)\setminus X^\sigma)\cap V')\cap V'\cap H(X^\sigma\cap W).
\end{split}
\end{equation*}
Consequently,
$$
h(E\cap S\cap W)=\cl(H(F^{-1}(Y^\tau)\setminus X^\sigma)\cap V')\cap h(S\cap W)
$$
is an $\an(Y^\tau)[\u]$-definable global $C$-semianalytic subset of $Y^\tau\times\R^m$. As $f=\pi\circ h$, we conclude by Theorem \ref{bm} $f(E\cap S\cap W)=\pi(h(E\cap S\cap W))$ is a global $C$-semianalytic set.

\paragraph{}\label{finish}To finish the proof of (i) and (ii) we find \em an open $C$-semianalytic neighborhood $A\subset Y^\tau$ of $y_0$ such that $f(S)\cap A=f(S\cap W)\cap A$ and $f(E\cap S)\cap A=f(E\cap S\cap W)\cap A$\em. Consequently, \em $f(S)$ and $f(E\cap S)$ are $C$-semianalytic sets\em.

As $W$ is an open neighborhood of $f^{-1}(y_0)$ in $X^\sigma$, we have $C:=f(X^\sigma\setminus W)$ is a closed subset of $Y$ (recall that $f$ is proper) that does not contain $y_0$. Let $A\subset Y^\tau$ be an open basic $C$-semianalytic neighborhood of $y_0$ that does not intersect $C$. As
\begin{align*}
&f(S)=f(S\cap W)\cup f(S\cap(X^\sigma\setminus W))\quad\text{and}\\
&f(E\cap S)=f(E\cap S\cap W)\cup f(E\cap S\cap(X^\sigma\setminus W)),
\end{align*}
we conclude $f(S)\cap A=f(S\cap W)\cap A$ and $f(E\cap S)\cap A=f(E\cap S\cap W)\cap A$, as required.

(iii) By hypothesis there exists a complexification $\widetilde{Z}\subset X$ of $Z$ that is closed in $X$, hence $\widetilde{Z}$ is a complex analytic subset of $X$ and $\widetilde{Z}\cap X^\sigma=Z$. By Remmert's Theorem $F(\widetilde{Z})$ is a complex analytic subset of $Y$, so $F(\widetilde{Z})\cap Y^\tau$ is a $C$-analytic subset of $Y^\tau$ that contains $F(Z)$. Thus, 
$$
F(Z)\subset F(\widetilde{Z})\cap Y^\tau=F(\widetilde{Z}\cap F^{-1}(Y^\tau))=F(\widetilde{Z}\cap X^\sigma)=F(Z),
$$ 
so $F(Z)=F(\widetilde{Z})\cap Y^\tau$ is a $C$-analytic subset of $Y^\tau$, as required.
\end{proof}

\subsection{Sufficient condition}
We end this section with some sufficient conditions under which statement Theorem \ref{properint}(iii) applies.

\begin{lem}\label{situ}
Assume that $Y^\tau$ is coherent and let $F:(X,\an_X)\to(Y,\an_Y)$ be an invariant surjective proper holomorphic map. Let $Z\subset Y$ be a complex analytic set such that $X\setminus F^{-1}(Z)$ is dense in $X$. Assume that $F|_{X\setminus F^{-1}(Z)}:X\setminus F^{-1}(Z)\to Y\setminus Z$ is biholomorphic. Then $F^{-1}(Y^\tau)=X^\sigma.$
\end{lem}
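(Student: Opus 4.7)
The plan is to prove $X^\sigma\subseteq F^{-1}(Y^\tau)$ directly from the equivariance $\tau\circ F=F\circ\sigma$ (if $\sigma(x)=x$ then $\tau(F(x))=F(x)$), and to concentrate all effort on the reverse inclusion. Fix $x_0\in F^{-1}(Y^\tau)$ and set $y_0:=F(x_0)\in Y^\tau$; invariance makes $\sigma$ permute the finite fiber $F^{-1}(y_0)$ (finite by properness and Steinness), so it suffices to rule out the possibility $\sigma(x_0)\neq x_0$. Along the way I will use that, since $F|_{X\setminus F^{-1}(Z)}$ is a biholomorphism onto $Y\setminus Z$, the set $Z$ is nowhere dense in $Y$, $\dim X=\dim Y$, and each $y\in Y\setminus Z$ has a unique preimage; after restricting to irreducible components of $X$ and $Y$ containing $x_0$ and $y_0$, I may argue pure-dimensionally with common dimension $n$.

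The crucial input from coherence of $Y^\tau$ would be the following lemma: \emph{$\tau$ preserves set-wise every irreducible component of the germ $Y_{y_0}$}. Indeed, if $W$ is an irreducible component with $\tau(W)\neq W$, any $\tau$-fixed point of $W$ lies in the proper complex analytic subgerm $W\cap\tau(W)$; replacing each such $W$ by $W\cap\tau(W)$ would produce a proper complex analytic subgerm of $Y_{y_0}$ still containing $(Y^\tau)_{y_0}$. But coherence of $Y^\tau$ at $y_0$ exactly says that $Y_{y_0}$ is the complexification of $(Y^\tau)_{y_0}$, i.e., the smallest complex analytic germ containing it, a contradiction.

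Now suppose for contradiction that $\sigma(x_0)=x_0'\neq x_0$. Pick an irreducible component $V$ of the germ $X_{x_0}$, set $V':=\sigma(V)$ (an irreducible component of $X_{x_0'}$), and choose disjoint analytic representatives $\widetilde V\subset U_0$ and $\widetilde V'\subset U_0'$ inside disjoint neighborhoods $U_0,U_0'$ of $x_0,x_0'$. For a small enough neighborhood $V^*$ of $y_0$, properness of $F$ forces $F^{-1}(V^*)\cap U_0$ to be clopen in $F^{-1}(V^*)$, so $F|_{\widetilde V\cap F^{-1}(V^*)}$ is proper and Remmert's proper mapping theorem makes $F(\widetilde V)$ a complex analytic subgerm of $Y_{y_0}$ (and similarly for $F(\widetilde V')$). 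Generic injectivity of $F$ on $\widetilde V$ gives $\dim F(\widetilde V)=n$, so by pure-dimensionality $F(\widetilde V)$ is a union of irreducible components of $Y_{y_0}$; equivariance yields $F(\widetilde V')=\tau(F(\widetilde V))$, and by the coherence lemma this equals $F(\widetilde V)$ as a set.

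To finish, I would choose $y\in F(\widetilde V)\setminus Z$ arbitrarily close to $y_0$, which is possible since $F(\widetilde V)$ is top-dimensional at $y_0$ and $Z$ is nowhere dense. Then $y\in F(\widetilde V)\cap F(\widetilde V')$ admits one preimage in $\widetilde V\subset U_0$ and another in $\widetilde V'\subset U_0'$, producing two distinct preimages of the same point $y\in Y\setminus Z$ and contradicting injectivity of $F|_{X\setminus F^{-1}(Z)}$; hence $\sigma(x_0)=x_0$. The main obstacle I expect is extracting from "$Y^\tau$ coherent at $y_0$" the minimality statement about complex analytic germs used in the coherence lemma, since the paper defines coherence sheaf-theoretically rather than as germ minimality, so the translation has to be done carefully. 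A secondary bookkeeping point is ensuring that the reduction to the irreducible/pure-dimensional setting is compatible with the $\sigma$-equivariant decomposition needed to pair $V$ with $V'=\sigma(V)$.
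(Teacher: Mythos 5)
Your proposal is correct in essence and follows the same skeleton as the paper's proof: reduce to $X,Y$ irreducible (hence pure-dimensional of equal dimension $d$), argue by contradiction with a point $z\in F^{-1}(Y^\tau)\setminus X^\sigma$, use invariance to also place $\sigma(z)$ in that set, apply Remmert's proper mapping theorem to see that the relevant image germs are unions of irreducible components of $Y_{F(z)}$, and finally feed in coherence of $Y^\tau$. Where you genuinely diverge is in how coherence is packaged and how the contradiction is extracted. The paper records the coherence input on the real side: since $Y$ is irreducible and $Y^\tau$ is coherent, $(Y^\tau)_{F(z)}$ is pure-dimensional of real dimension $d$ and its irreducible components are exactly the intersections of the irreducible components of $Y_{F(z)}$ with $Y^\tau$; it then observes that $F(X_z)\cap F(X_{\sigma(z)})$ lies in $Z$, hence has complex dimension $<d$, and that by invariance $F(X_z)\cap Y^\tau=F(X_{\sigma(z)})\cap Y^\tau$ is contained in that intersection, producing a dimension contradiction. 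You instead record the coherence input on the complex side, via your "coherence lemma" that $\tau$ fixes every irreducible component of $Y_{y_0}$ set-wise; this forces $F(\widetilde V)=\tau(F(\widetilde V))=F(\widetilde V')$, and a point $y\in F(\widetilde V)\setminus Z$ near $y_0$ then has distinct preimages in the disjoint representatives $\widetilde V$ and $\widetilde V'$, contradicting injectivity of $F$ on $X\setminus F^{-1}(Z)$. The two coherence statements are essentially equivalent (for each component $W$ of $Y_{y_0}$, $W\cap (Y^\tau)_{y_0}$ is a $d$-dimensional component of $(Y^\tau)_{y_0}$ whose complexification inside the irreducible $W$ must be all of $W$, which is exactly $\tau$-invariance), so the contents match; your preimage-counting finish is a slightly more elementary endgame than the paper's real-dimension count, and arguably more transparent once the coherence lemma is in hand.

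Two points deserve the care you already flag. First, your coherence lemma rests on the translation "coherence of $Y^\tau$ at $y_0$ implies $Y_{y_0}$ is the smallest complex analytic germ containing $(Y^\tau)_{y_0}$"; in the paper's framework this is the statement that the germ of the global complexification agrees with the complexification of the germ (see the recollections in Section 2, in particular fact (i) from \cite[III.2.8]{gmt}), and deriving it from the sheaf-theoretic finite-type definition requires a short argument identifying $\ideal(Y^\tau)\an_{\,\cdot\,,y_0}$ with $\J_{Y^\tau,y_0}$. Second, the reduction "restrict to irreducible components of $X$ and $Y$ containing $x_0$ and $y_0$" needs to be formulated $\sigma$-equivariantly, since a priori $\sigma(x_0)$ could lie in a different irreducible component of $X$ than $x_0$; the paper's blanket "we may assume $X$ irreducible" sidesteps this because $\sigma$ already maps $X$ to $X$, and you should arrange your reduction the same way (reduce to a single $\sigma$-invariant irreducible component, or to the pair of components swapped by $\sigma$) rather than to a component containing only $x_0$. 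Neither issue is a gap in the idea, only in the bookkeeping.
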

\begin{proof}
We may assume $X$ is irreducible and consequently also $Y$ is irreducible. In particular, both are pure dimensional of the same dimension $d$. We have to prove $F^{-1}(Y^\tau)\subset X^\sigma$. 

Suppose by contradiction that there exists $z\in F^{-1}(Y^\tau)\setminus X^\sigma$. As $F$ is invariant, $F(\sigma(z))=F(z)\in Y^\tau$, so $\sigma(z)\in F^{-1}(Y^\tau)\setminus X^\sigma$. 

By Remmert's Theorem the germs $F(X_z)$ and $F(X_{\sigma(z)})$ are unions of irreducible components of $Y_{F(z)}$. As $X\setminus F^{-1}(Z)$ is dense in $X$ and the restriction 
$$
F|_{X\setminus F^{-1}(Z)}:X\setminus F^{-1}(Z)\to Y\setminus Z
$$ 
is biholomorphic, we conclude $\dim_\C(F(X_z)\cap F(X_{\sigma(z)}))<d$. As $F$ is invariant, $F(X_z)\cap Y^\tau=F(X_{\sigma(z)})\cap Y^\tau$, so $\dim_\R(F(X_z)\cap Y^\tau)<d$. As $Y$ is irreducible and $Y^\tau$ is coherent, $Y^\tau$ is pure dimensional, so all the irreducible components of $Y^\tau_{F(z)}$ have dimension $d$. In addition, the irreducible components of $Y^\tau_{F(z)}$ are the intersections with $Y^\tau_{F(z)}$ of the irreducible components of $Y_{F(z)}$. But this is impossible because $F(X_z)$ is a union of irreducible components of $Y_{F(z)}$, $\dim_\R(F(X_z)\cap Y^\tau)<d$ and all the irreducible components of $Y^\tau_{F(z)}$ have dimension $d$.
\end{proof}

\section{Set of points of non-coherence of a $C$-analytic set}\label{s5}

In \cite{tt} it is proved that set $N(X)$ of points where a $d$-dimensional analytic set $X$ is non-coherent is a semianalytic set. The authors sketch a smart semianalytic decomposition of the set of points $x\in X$ at which the germ $X_x$ has an irreducible component of dimension $d$ that is non-coherent. Then they prove inductively that the set of points $x\in X$ at which the germ $X_x$ has all its irreducible components of dimension $d$ coherent but one of dimension $k<d$ non-coherent is also semianalytic. In this section we improve their construction to prove that $N(X)$ is a $C$-semianalytic set whenever $X$ is $C$-analytic.

\subsection{Description of the set of points of non-coherence of a $C$-analytic set}

Let $M$ be a real analytic manifold and let $X\subset M$ be a $C$-analytic set of dimension $d$. For each $0\leq k\leq d:=\dim(X)$ let ${\mathfrak F}_k$ be the collection of all the irreducible $C$-analytic sets $Z\subset M$ of dimension $k$ that are an irreducible component of $\Sing_\ell(X)$ for some $\ell\geq0$ (see \ref{sing}). Define $Z_k:=\bigcup_{Z\in{\mathfrak F}_k}Z$ and 
$$
R_k:=\bigcup_{j=k+1}^dZ_{j,(j)}\quad\text{where}\quad Z_{j,(j)}:=\{z\in Z_j:\ \dim_{\R}(Z_j)=j\}=\cl(Z_j\setminus\Sing(Z_j)). 
$$
Let $\widetilde{Z}_k$ be a complexification of $Z_k$ and let $(Y_k,\pi_k)$ be the normalization of $\widetilde{Z}_k$ endowed with the anti-involution $\sigma_k:Y_k\to Y_k$ that is induced by the usual conjugation on $\widetilde{Z}_k$. Let 
$$
Y_k^{\sigma_k}:=\{y\in Y_k:\ \sigma_k(y)=y\} 
$$
be the fixed part space of $Y$, which is a $C$-analytic space. Define 
\begin{align*} 
&Y^{\sigma_k}_{k,(k)}:=\{y\in Y_k^{\sigma_k}:\ \dim_{\R}(Y_{k,y}^{\sigma_k})=k\}=\cl(Y^{\sigma_k}_k\setminus\Sing(Y^{\sigma_k}_k)),\\
&C_{k,1}:=\pi_k^{-1}(Z_k)\setminus Y_k^{\sigma_k},\quad C_{k,2}:=Y_k^{\sigma_k}\setminus Y^{\sigma_k}_{k,(k)},\\
&A_{k,i}:=\cl(C_{k,i})\cap\cl(Y^{\sigma_k}_{k,(k)}\setminus\pi^{-1}(R_k))\quad\text{for $i=1,2$}.
\end{align*}
We state next our main result.
 
\begin{thm}\label{ncp0}
Let $N(X)$ be the set of points of non-coherence of the $C$-analytic set $X$ of dimension $d$ and let $N_k(Z_k,R_k):=\pi_k(A_{k,1})\cup\pi_k(A_{k,2})$. Then 
\begin{itemize}
\item[(i)] $N_k(Z_k,R_k)$ is a $C$-semianalytic set of dimension $\leq k-2$.
\item[(ii)] $\bigcup_{k=j}^dN_k(Z_k,R_k)$ is the set of points of $X$ such that the germ $X_x$ has a non-coherent irreducible component of dimension $\geq j$.
\item[(iii)] $N(X)=\bigcup_{k=2}^dN_k(Z_k,R_k)$.
\end{itemize}
\end{thm}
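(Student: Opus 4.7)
The strategy is to reinterpret the sets $A_{k,i}$ via the local Tancredi--Tognoli criterion for coherence and then push the result forward through $\pi_k$ using Theorem~\ref{properint}. Recall the local picture: a pure $k$-dimensional $C$-analytic germ $(Z_k)_x$ is coherent precisely when every point $y\in\pi_k^{-1}(x)$ lies in the real locus $Y_k^{\sigma_k}$ (failure of this is the Whitney-umbrella phenomenon and is detected by $C_{k,1}$) and has real local dimension equal to $k$ (failure of which is detected by $C_{k,2}$). The second factor $\cl(Y_{k,(k)}^{\sigma_k}\setminus\pi_k^{-1}(R_k))$ in $A_{k,i}$ selects those fibers that sit above points of $Z_k$ retaining a genuine $k$-dimensional real stratum, i.e., points not absorbed into some higher stratum $Z_j$ with $j>k$.

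For part (i), every building block is either $C$-analytic (e.g.\ $Y_k^{\sigma_k}$, $\pi_k^{-1}(Z_k)$, $\pi_k^{-1}(R_k)$) or $C$-semianalytic via Proposition~\ref{dimks} (e.g.\ $Y_{k,(k)}^{\sigma_k}$). The operations of complementation, finite intersection and closure preserve the $C$-semianalytic class (Section~\ref{bpcss}), so both $A_{k,1}$ and $A_{k,2}$ are $C$-semianalytic subsets of $Y_k^{\sigma_k}$; since all defining data come from invariant holomorphic sections on $Y_k$, both are moreover ${\mathcal A}(Y_k^{\sigma_k})$-definable. Applying Theorem~\ref{properint}(i) to the invariant proper holomorphic map $\pi_k\colon Y_k\to\widetilde Z_k$ between reduced Stein spaces --- passing, if necessary, to an invariant Stein neighbourhood of $Y_k^{\sigma_k}$ in $Y_k$ --- yields that $\pi_k(A_{k,i})$ is $C$-semianalytic of the same dimension as $A_{k,i}$. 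The bound $\dim A_{k,i}\leq k-2$ is the codimension-two principle for non-coherence of a pure $k$-dimensional $C$-analytic set established by Tancredi--Tognoli: outside a subset of codimension two in $Y_k^{\sigma_k}$ the fibers of $\pi_k$ over real points are both real and of correct real dimension.

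For part (ii) I would combine the criterion in \ref{cas}(ii) --- $X_x$ is coherent if and only if every irreducible component of the germ is coherent --- with the fact that each irreducible component of dimension $k\geq j$ of $X_x$ arises as an irreducible component of some $\Sing_\ell(X)$, hence of some $Z_k$ with $k\geq j$. Fixing $k$ and $x\in Z_k$ with $x\notin R_k$, the germ $(Z_k)_x$ has a genuine $k$-dimensional irreducible component, and the local Tancredi--Tognoli criterion applied to $\pi_k$ identifies its non-coherence with the condition $\pi_k^{-1}(x)\cap(\cl(C_{k,1})\cup\cl(C_{k,2}))\neq\varnothing$ together with $x$ lying in $\pi_k(\cl(Y_{k,(k)}^{\sigma_k}\setminus\pi_k^{-1}(R_k)))$, which is precisely the condition $x\in N_k(Z_k,R_k)$. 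Taking the union over $k\geq j$ yields (ii). Part (iii) then follows by setting $j=0$ and noting that analytic curves and zero-dimensional analytic germs are always coherent, so $N_0=N_1=\varnothing$.

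\textbf{Main obstacle.} The most delicate step is the precise global reformulation of the local Tancredi--Tognoli criterion needed in (ii); in particular one must show that closing $C_{k,1}$ inside the ambient complex space $Y_k$, and only then intersecting with the real-regular-stratum closure, captures exactly the non-coherence caused by complex preimages accumulating onto real points of $Z_k$ --- neither more nor less --- and that the codimension-two estimate transfers cleanly through $\pi_k$ with no loss of dimension thanks to Theorem~\ref{properint}(i).
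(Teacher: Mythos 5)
Your proposal follows the paper's general strategy, but it has genuine gaps in both parts.

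For part (i), you claim that the operations in Section~\ref{bpcss} applied to the building blocks yield $C$-semianalytic sets and then assert they are moreover ${\mathcal A}(Y_k^{\sigma_k})$-definable ``since all defining data come from invariant holomorphic sections.'' This last step is the delicate one and is not automatic. The closure operation, as treated in Section~\ref{bpcss}, only preserves the $C$-semianalytic property; it does \emph{not} obviously preserve ${\mathcal A}$-definability (the local descriptions produced by \cite[VIII.7.2]{abr} a priori involve all of $\an(M)$, not just the invariant global sections). The paper needs the strengthened Proposition~\ref{salvation} precisely to guarantee that taking closures and connected components keeps the result ${\mathcal A}$-definable, and only then is Theorem~\ref{properint} applicable. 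Likewise your dimension bound $\dim A_{k,i}\leq k-2$ is asserted as a ``codimension-two principle of Tancredi--Tognoli,'' but no argument is given; the actual route in the paper is $\dim(C_1)\leq d-1$ (by comparing a component $\pi(Y_y)$ with its conjugate $\pi(Y_{\sigma(y)})$ inside $\widetilde X_x$), $\dim(C_2)\leq d-2$ (normality of $Y$), and then the strict drop $A_i\subset\cl(C_i)\setminus C_i$ by \cite[VIII.2.11]{abr}, which in fact gives $\dim\pi(A_2)\leq d-3$.

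For part (ii), your translation of the membership condition is off in a way that would break the proof. You write that $x\in N_k(Z_k,R_k)$ is equivalent to the \emph{conjunction} ``$\pi_k^{-1}(x)\cap(\cl(C_{k,1})\cup\cl(C_{k,2}))\neq\varnothing$'' together with ``$x\in\pi_k(\cl(Y_{k,(k)}^{\sigma_k}\setminus\pi_k^{-1}(R_k)))$.'' But by the definition of $A_{k,i}$ the two conditions must hold at the \emph{same} fiber point $y$, not merely each for some $y$ in the fiber; your reformulation is strictly weaker and the implications you need would not follow. What the paper actually proves is Proposition~\ref{cluenk}: $x\in N_d(X,R)$ if and only if $X_x$ has a non-coherent irreducible component $T_x$ of dimension $d$ with $\dim(T_x\setminus R_x)=d$. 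Establishing this requires pairing each top-dimensional component of $X_x$ with a specific $y\in\pi^{-1}(x)$ via Lemma~\ref{gmtiv312}(i), and invoking the coherence criterion Lemma~\ref{gmtiv313} on a small invariant neighbourhood $W_1$ of that particular $y$ disjoint from $\cl(C_1)\cup\cl(C_2)$. Moreover, the passage in (ii)--(iii) from components of $X_x$ to components of $Z_{k,x}$ and back is not ``each such component arises as an irreducible component of some $Z_k$''; it rests on the decomposition $Z_x=\bigcup_\ell\ol{\Reg(\Sing_\ell(Z))_x}^{\zar}$ together with repeated use of the germ-theoretic Lemma~\ref{easy}, plus a separate argument (the coherent case, Case~2 in the paper) to verify that the condition $\dim(T_x\setminus R_{k,x})=k$ is not spoiled by higher-dimensional strata. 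None of this is in the proposal; the core combinatorics of (ii)--(iii) is missing.
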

Once we prove the previous result it holds in particular Corollary \ref{ncc}. 

\subsection{Initial preparation}\label{ip} 
By Grauert's immersion theorem we assume that $X$ is a $C$-analytic subset of $\R^n$. Let $\widetilde{X}$ be a complexification of $X$ that is an invariant complex analytic subset of a Stein open neighborhood $\Omega\subset\C^n$ of $\R^n$. Denote the restriction to $\widetilde{X}$ of the complex conjugation on $\C^n$ with $\sigma:\widetilde{X}\to\widetilde{X}$. It holds $d:=\dim_\R(X)=\dim_\C(\widetilde{X})$ and $X=\{x\in\widetilde{X}:\ \sigma(x)=x\}$. Let $\pi:Y\to\widetilde{X}$ be the normalization of $\widetilde{X}$. As $\widetilde{X}$ is Stein, also $Y$ is Stein \cite{n3}. The complex conjugation of $\widetilde{X}$ extends to an anti-involution $\widehat{\sigma}$ on $Y$ that makes the following diagram commutative \cite[IV.3.10]{gmt}
$$
\xymatrix{
&Y^{\widehat{\sigma}}\ar[d]_{\pi|_{Y^{\widehat{\sigma}}}}\ar@{^{(}->}[r]&Y\ar[r]^{\widehat{\sigma}}\ar[d]_{\pi}&Y\ar[d]^{\pi}\\
X\ar@{=}[r]&\widetilde{X}^{\sigma}\ar@{^{(}->}[r]&\widetilde{X}\ar[r]^{\sigma}&\widetilde{X}
}
$$
where $Y^{\widehat{\sigma}}:=\{y\in Y:\ \widehat{\sigma}(y)=y\}$ is the set of fixed points of $\widehat{\sigma}$.

The following results from \cite{gmt} will help us to describe the set of points of non-coherence of a $C$-analytic set. We keep all the notations introduced before.

\begin{lem}\em(\cite[IV.3.12]{gmt})\em\label{gmtiv312}
Let $x\in X$. We have:
\begin{itemize}
\item[(i)] If $Z_x$ is an irreducible component of $X_x$ of dimension $d$, its complexification $\widetilde{Z}_x$ is an irreducible component of $\widetilde{X}_x$. If $y\in Y$ satisfies $\pi(Y_y)=\widetilde{Z}_x$, then $y\in Y^{\widehat{\sigma}}$ and $\dim_\R(Y^{\widehat{\sigma}}_y)=d$. 
\item[(ii)] Let $y\in Y^{\widehat{\sigma}}$ be such that $\pi(y)=x$ and $\dim_\R(\pi(Y_y)\cap X_x)<d$. Then $\dim_\R(Y^{\widehat{\sigma}}_y)<d$. In particular, if $\dim_\R(X_x)<d$, we have $\dim_\R(Y^{\widehat{\sigma}}_y)<d$ for all $x\in\pi^{-1}(x)\cap Y^{\widehat{\sigma}}$.
\end{itemize}
\end{lem}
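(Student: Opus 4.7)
My plan for part (i) proceeds in three steps. First, since $Z_x$ is a real-analytic irreducible germ of dimension $d = \dim_\R(X) = \dim_\C(\widetilde{X})$, the complexification functor recalled in Section \ref{complexification} preserves irreducibility and dimension, so $\widetilde{Z}_x \subset \widetilde{X}_x$ is an irreducible complex germ of complex dimension $d$; as $\dim_\C(\widetilde{X}_x) = d$ too, it must be one of its irreducible components. Second, because $Y$ is the normalization of $\widetilde{X}$, each local ring $\an_{Y,y}$ is an integral domain, and this forces the correspondence $y \mapsto \pi(Y_y)$ to be a bijection between $\pi^{-1}(x)$ and the set of irreducible components of $\widetilde{X}_x$. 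Since $\widetilde{Z}_x$ is the complexification of a real germ one has $\sigma(\widetilde{Z}_x) = \widetilde{Z}_x$, and therefore $\pi(Y_{\widehat{\sigma}(y)}) = \sigma(\pi(Y_y)) = \widetilde{Z}_x = \pi(Y_y)$; uniqueness in the bijection then forces $\widehat{\sigma}(y) = y$. Third, because $\widehat{\sigma}$ is the anti-holomorphic involution on $Y$ whose existence was invoked in \ref{ip}, the pair $(Y_y,\widehat{\sigma})$ realises $Y_y$ as a complexification of the real germ $Y^{\widehat{\sigma}}_y$ in the sense of \ref{fixed}--\ref{complexification}, and one concludes $\dim_\R(Y^{\widehat{\sigma}}_y) = \dim_\C(Y_y) = d$.

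For part (ii) the argument is essentially dimension preservation under finite maps. Since $\pi$ is proper with finite fibers, $\pi|_{Y^{\widehat{\sigma}}_y} : Y^{\widehat{\sigma}}_y \to \pi(Y^{\widehat{\sigma}}_y)$ is finite and surjective, and hence both germs share the same real dimension. Because $\pi(Y^{\widehat{\sigma}}) \subset \widetilde{X}^\sigma = X$, the image satisfies $\pi(Y^{\widehat{\sigma}}_y) \subset \pi(Y_y) \cap X_x$, a set of real dimension strictly less than $d$ by hypothesis. Consequently $\dim_\R(Y^{\widehat{\sigma}}_y) \leq \dim_\R(\pi(Y_y) \cap X_x) < d$. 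The final assertion is then immediate: if $\dim_\R(X_x) < d$, then trivially $\dim_\R(\pi(Y_{y'}) \cap X_x) \leq \dim_\R(X_x) < d$ for every $y' \in \pi^{-1}(x) \cap Y^{\widehat{\sigma}}$, and the previous step applies.

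I expect the main technical obstacle to lie in the third step of part (i), namely rigorously checking that $(Y_y,\widehat{\sigma})$ is a complexification of $Y^{\widehat{\sigma}}_y$ so that the dimension equality $\dim_\R(Y^{\widehat{\sigma}}_y) = d$ follows formally from the general machinery of \ref{complexification}. This needs two ingredients: that $\widehat{\sigma}$ transforms $\an_Y$ into $\overline{\an}_Y$ locally around $y$ (inherited from the extension property of normalization applied to $\sigma$), and that $y$ is a fixed point so that the fixed-part-space construction from \ref{fixed} applies to the germ. A concrete alternative, useful as a sanity check, would be to lift regular points of $Z_x$ near $x$ through the biholomorphism $\pi|_{Y \setminus \pi^{-1}(\Sing(\widetilde{X}))}$, verify $\widehat{\sigma}$-invariance of the lifts by the uniqueness argument of step two, and thereby exhibit an explicit real $d$-dimensional submanifold germ inside $Y^{\widehat{\sigma}}_y$.
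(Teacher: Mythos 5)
The paper does not supply a proof of this lemma; it is cited verbatim from Guaraldo--Macr\`{\i}--Tancredi \cite[IV.3.12]{gmt}, so there is no in-house argument to compare against, and your proposal can only be judged on its own terms. Steps one and two of part (i) are fine: the complexification of a real irreducible germ of dimension $d$ is an irreducible complex germ of complex dimension $d$, forcing $\widetilde{Z}_x$ to be an irreducible component of $\widetilde{X}_x$; the normalization bijection $\pi^{-1}(x)\leftrightarrow\{\text{irreducible components of }\widetilde{X}_x\}$ together with $\sigma$-invariance of $\widetilde{Z}_x$ then forces $\widehat{\sigma}(y)=y$. Part (ii) is also correct: $\pi(Y^{\widehat{\sigma}})\subset X$ by equivariance, $\pi(Y^{\widehat{\sigma}}_y)\subset \pi(Y_y)\cap X_x$, and the finite surjection $\pi|_{Y^{\widehat{\sigma}}_y}$ preserves dimension, giving $\dim_\R(Y^{\widehat{\sigma}}_y)<d$.

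The third step of part (i) is a genuine gap, and you correctly flag it as such. Asserting that $(Y_y,\widehat{\sigma})$ realises $Y_y$ as a complexification of $Y^{\widehat{\sigma}}_y$ so that $\dim_\R(Y^{\widehat{\sigma}}_y)=\dim_\C(Y_y)$ follows formally is circular: the dimension equality is precisely what would certify the complexification property, and it is false in general that the fixed locus of an anti-involution on a singular (even normal) complex germ has full real dimension. The normal quadratic cone $\{z_1^2+z_2^2+z_3^2=0\}\subset\C^3$ with ordinary conjugation, whose real locus is the origin, shows what can go wrong; one cannot bypass the issue via ``the ring of invariants has the same Krull dimension'' either, since $\an_{Y^{\widehat\sigma},y}$ is only a quotient of the invariant ring. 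What saves you is exactly the extra information available here, namely that $\pi(Y_y)=\widetilde Z_x$ already carries a $d$-dimensional real locus $Z_x$: since $\dim_\R Z_x=d>\dim_\C\Sing(\widetilde Z_x)$, there are real regular points of $\widetilde Z$ arbitrarily close to $x$; at such a point $z$ the germ $\widetilde Z_z$ is a smooth invariant complex manifold germ with $z$ fixed, so its real locus is a real form of dimension $d$; $\pi$ is an equivariant biholomorphism near the unique preimage $\pi^{-1}(z)$, so the lift gives a $d$-dimensional germ of $Y^{\widehat\sigma}$ near $\pi^{-1}(z)$, and $\pi^{-1}(z)\to y$ as $z\to x$ because $\pi$ is proper and $z$ stays in the component whose preimage germ at $y$ is $Y_y$. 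Combined with the upper bound (stratify $Y$ into regular and singular part and use that the fixed locus of an anti-involution on a smooth germ of complex dimension $k$ is a real $k$-manifold), this gives $\dim_\R(Y^{\widehat\sigma}_y)=d$. In short: your ``sanity-check alternative'' is not an alternative, it \emph{is} the proof; promote it to the body of the argument and discard the formal complexification claim.
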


\begin{lem}\em(\cite[IV.3.13]{gmt})\em\label{gmtiv313}
Let $x\in X$ be a point such that all the irreducible components of $X_x$ have dimension $d$. The germ $X_x$ is coherent if and only if there exists a invariant open neighborhood $V$ of $x$ in $\widetilde{X}$ such that
\begin{itemize}
\item[(i)] $\pi^{-1}(X\cap V)=Y^{\widehat{\sigma}}\cap\pi^{-1}(V)$.
\item[(ii)] $\dim_\R(Y^{\widehat{\sigma}}_z)=d$ for every $z\in\pi^{-1}(a)$ and $a\in X\cap V$.
\end{itemize}
\end{lem}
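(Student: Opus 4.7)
The plan is to reduce both implications to the characterization of coherence recalled in the Preliminaries: $X$ is coherent at $x$ if and only if there is an open neighborhood $U$ of $x$ in $\R^n$ such that $\widetilde{X}_a$ is the complexification of $X_a$ for every $a\in X\cap U$. Two features of the normalization $\pi:Y\to\widetilde{X}$ will be used throughout: the fiber $\pi^{-1}(a)$ is in bijection with the irreducible components of $\widetilde{X}_a$ via $y\mapsto \pi(Y_y)$, and this bijection is equivariant in the sense that $\pi(Y_{\widehat{\sigma}(y)})=\sigma(\pi(Y_y))$.

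For the sufficiency direction, assume that an invariant neighborhood $V$ satisfying (i) and (ii) is given, and fix $a\in X\cap V$. For any $y\in\pi^{-1}(a)$, condition (i) forces $y\in Y^{\widehat{\sigma}}$, so condition (ii) gives $\dim_{\R}Y^{\widehat{\sigma}}_y=d$; since $Y^{\widehat{\sigma}}_y\subset Y_y$ and $\dim_{\C}Y\leq d$, this yields $\dim_{\C}\pi(Y_y)=\dim_{\C}Y_y=d$. The contrapositive of Lemma~\ref{gmtiv312}(ii) then produces $\dim_{\R}(\pi(Y_y)\cap X_a)=d$, so each irreducible component $\pi(Y_y)$ of $\widetilde{X}_a$ is invariant and coincides with the complexification of its real trace. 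Consequently $\widetilde{X}_a$ is the complexification of $X_a$, and coherence of $X_x$ follows from the criterion above with $U:=V\cap\R^n$.

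For the necessity direction, suppose $X_x$ is coherent. By the coherence criterion pick an invariant open neighborhood $V_0$ of $x$ in $\widetilde{X}$ so that $\widetilde{X}_a$ equals the complexification of $X_a$ for every $a\in X\cap V_0$. The purity hypothesis on $X_x$, combined with Lemma~\ref{gmtiv312}(i), implies that every irreducible component of $\widetilde{X}_x$ has complex dimension $d$; after shrinking $V_0$ to an invariant subneighborhood $V$ meeting only those components of $\widetilde{X}$ that pass through $x$, we may further assume that $\widetilde{X}\cap V$ is pure $d$-dimensional and hence that each $X_a$ for $a\in X\cap V$ is pure $d$-dimensional as well. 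For (i), the inclusion $\pi^{-1}(X\cap V)\supset Y^{\widehat{\sigma}}\cap\pi^{-1}(V)$ is automatic from $\pi\circ\widehat{\sigma}=\sigma\circ\pi$. Conversely, given $y\in\pi^{-1}(a)$ with $a\in X\cap V$, the component $\pi(Y_y)$ of $\widetilde{X}_a$ is $\sigma$-invariant because $\widetilde{X}_a$ is the complexification of $X_a$ (whose irreducible components complexify one-to-one to those of $\widetilde{X}_a$), so the equivariance of the bijection forces $\widehat{\sigma}(y)=y$. For (ii), each $y\in\pi^{-1}(a)$ corresponds to the complexification $\widetilde{Z}_a=\pi(Y_y)$ of a $d$-dimensional irreducible component $Z_a$ of $X_a$, and Lemma~\ref{gmtiv312}(i) then gives $\dim_{\R}Y^{\widehat{\sigma}}_y=d$.

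The main obstacle I anticipate is the simultaneous shrinking of $V$ in the necessity direction so that purity of $\widetilde{X}\cap V$ and the coherence-criterion identity both hold throughout $X\cap V$; once $V$ is in place, the verification reduces to careful bookkeeping via the equivariant bijection between $\pi^{-1}(a)$ and the irreducible components of $\widetilde{X}_a$, together with direct applications of the two items of Lemma~\ref{gmtiv312}.
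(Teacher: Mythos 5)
The paper does not prove this lemma at all: it is quoted verbatim from \cite[IV.3.13]{gmt}, so there is no internal proof to compare with and your attempt has to stand on its own. Your ``if'' half does stand: conditions (i) and (ii) force every $y\in\pi^{-1}(a)$, $a\in X\cap V$, to be $\widehat\sigma$-fixed with $\dim_\R Y^{\widehat\sigma}_y=d$, hence $\dim_\C\pi(Y_y)=d$, and the contrapositive of Lemma~\ref{gmtiv312}(ii) shows each component $\pi(Y_y)$ of $\widetilde X_a$ is the complexification of its real trace; using the standard bijection between $\pi^{-1}(a)$ and the irreducible components of $\widetilde X_a$ (a fact the paper also uses without comment) you get that $\widetilde X_a$ is the complexification of $X_a$ for all $a\in X\cap V$, and the coherence criterion recalled in the Preliminaries applies. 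That part is fine.

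The ``only if'' half has a genuine gap at its very first step. The criterion you invoke carries the hypothesis that $\widetilde X_x$ is the complexification of the germ $X_x$, and you never verify it; it does \emph{not} follow from coherence of $X_x$ plus purity of its dimension when $\widetilde X$ is the \emph{global} complexification of $X$. Take $X:=\{y=0\}\cup\{x^2-zy^2=0\}\subset\R^3$ and $x:=(0,0,-1)$: the germ $X_x$ is the plane germ $\{y=0\}_x$, coherent and pure of dimension $d=2$, yet $\widetilde X_x$ also contains the two complex sheets $\{x=\pm\sqrt{z}\,y\}$ of the umbrella. In the normalization, the points $(\pm i\tau,0)$ of the umbrella's normalization map to the handle points $(0,0,-\tau^2)\in X$, so for every invariant neighborhood $V$ of $x$ one has $\pi^{-1}(X\cap V)\not\subset Y^{\widehat\sigma}$ and $Y^{\widehat\sigma}_z=\varnothing$ at those preimages: both (i) and (ii) fail although $X_x$ is coherent and pure $d$-dimensional. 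So the necessity direction simply cannot be derived from the hypotheses as you (and the paper) state them; the missing input is precisely the local minimality $\widetilde X_x=\widehat{X_x}$, which is tacitly present wherever the paper applies the lemma (in the proof of Proposition~\ref{cluenk} it is used for $B=\pi(W_1)\cap X$ with the irreducible complexification $\pi(W_1)$, whose germ at $x$ \emph{is} the complexification of $B_x$). Relatedly, your intermediate claim that purity of $\widetilde X\cap V$ ``hence'' gives purity of each $X_a$ is false without that same equality (the umbrella again), though it becomes harmless once the equality is granted. With $\widetilde X_x=\widehat{X_x}$ added as a hypothesis (or established by some argument you have not supplied), the rest of your necessity argument --- invariance of the components forcing $\widehat\sigma(y)=y$, and Lemma~\ref{gmtiv312}(i) giving (ii) --- is correct.
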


\subsection{Points of non-coherence for maximal dimension outside a prescribed set}\label{comment}\setcounter{paragraph}{0}
Let $R\subset X$ be a closed ${\mathcal A}(\widetilde{X}^\sigma)$-definable $C$-semianalytic set. Let $Y'$ be the union of the irreducible components of $Y$ of dimension strictly smaller than $d$. Define: 
\begin{align*}
&Y^{\widehat{\sigma}}_{(d)}:=\{y\in Y^{\widehat{\sigma}}:\ \dim_{\R}(Y^{\widehat{\sigma}})=d\}=\cl(Y^{\widehat{\sigma}}\setminus(\Sing(Y^{\widehat{\sigma}})\cup Y')),\\ 
&C_1:=\pi^{-1}(X)\setminus Y^{\widehat{\sigma}},\quad C_2:=Y^{\widehat{\sigma}}\setminus Y^{\widehat{\sigma}}_{(d)},\\
&A_i:=\cl(C_i)\cap\cl(Y^{\widehat{\sigma}}_{(d)}\setminus\pi^{-1}(R))\subset Y^{\widehat{\sigma}}\quad\text{for $i=1,2$}.
\end{align*} 
We claim: \em $\pi(A_i)\subset X$ is a $C$-semianalytic set\em.
\begin{proof}
The $C$-semianalytic set $Y^{\widehat{\sigma}}\setminus(\Sing(Y^{\widehat{\sigma}})\cup Y')$ is ${\mathcal A}(Y^{\widehat{\sigma}})$-definable, so by Proposition \ref{salvation} $Y^{\widehat{\sigma}}_{(d)}=\cl(Y^{\widehat{\sigma}}\setminus(\Sing(Y^{\widehat{\sigma}})\cup Y'))$ is ${\mathcal A}(Y^{\widehat{\sigma}})$-definable. By Proposition \ref{salvation} the same happens with $\cl(C_2)$. As $R$ is an ${\mathcal A}(\widetilde{X}^\sigma)$-definable $C$-semianalytic set, then $\pi^{-1}(R)\cap Y^{\widehat{\sigma}}$ is an ${\mathcal A}(Y^{\widehat{\sigma}})$-definable $C$-semianalytic set. By Proposition \ref{salvation} the same happens with $\cl(Y^{\widehat{\sigma}}_{(d)}\setminus\pi^{-1}(R))$. By Theorem \ref{properint}(i) and (ii) $\pi(A_i)\subset X$ is a $C$-semianalytic set for $i=1,2$, as required.
\end{proof}

Define 
\begin{equation}
N_d(X,R):=\pi(A_1)\cup\pi(A_2)\subset X,
\end{equation}
which is a $C$-semianalytic set. We keep all previous notations in the following results.

\begin{lem}\label{dimnk}
We have: 
\begin{itemize}
\item[(i)] $\dim_\R(C_1)\leq d-1$ and $\dim_\R(C_2)\leq d-2$.
\item[(ii)] $\dim_\R(\pi(A_1))\leq d-2$ and $\dim_\R(\pi(A_2))\leq d-3$.
\end{itemize}
\end{lem}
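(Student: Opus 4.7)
The plan is to treat (i) first by exploiting the normality of $Y$ together with Lemma \ref{gmtiv312}(i), and then to derive (ii) by observing that $A_1$ and $A_2$ are contained in the frontier of $C_1$ and $C_2$ respectively, so each extra codimension comes for free.

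For the bound $\dim_\R(C_1)\leq d-1$, I would argue that the top-dimensional locus of $\pi^{-1}(X)$ is entirely captured by $Y^{\widehat{\sigma}}$. Fix $y\in\pi^{-1}(X)$ with $\dim_\R(\pi^{-1}(X)_y)=d$ and pick an irreducible component $Y_y^{(j)}$ of $Y_y$ along which $\pi^{-1}(X)$ already has real dimension $d$. Since $\pi$ has finite fibres and is biholomorphic off a thin analytic set, $\pi(Y_y^{(j)})$ is an irreducible component of $\widetilde{X}_{\pi(y)}$ of complex dimension $d$, and its real trace is the corresponding irreducible component of $X_{\pi(y)}$, also of dimension $d$. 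Lemma \ref{gmtiv312}(i) then forces $y\in Y^{\widehat{\sigma}}$, so $y\notin C_1$. Hence $\dim_\R(C_1)\leq d-1$.

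For $\dim_\R(C_2)\leq d-2$, I would use the normality of $Y$. At a smooth point $y\in Y$ that is fixed by $\widehat{\sigma}$ and lies in an irreducible component of complex dimension $d$, the differential $d\widehat{\sigma}_y$ is an antilinear involution of $T_yY\cong\C^d$, whose real fixed subspace has real dimension $d$. Consequently $Y^{\widehat{\sigma}}$ has local real dimension $d$ at every smooth point of $Y$ lying in a $d$-dimensional irreducible component. Thus $C_2$, which records the points of $Y^{\widehat{\sigma}}$ where the local dimension drops below $d$, is contained in $\Sing(Y)^{\widehat{\sigma}}\cup Y'^{\widehat{\sigma}}$. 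Because $Y$ is normal, $\Sing(Y)$ has complex codimension $\geq 2$ in every irreducible component, so $\dim_\R(\Sing(Y)^{\widehat{\sigma}})\leq d-2$; and within each irreducible component of $Y'$ the argument applied to that component yields the analogous bound (the ambient component itself being removed from $Y^{\widehat{\sigma}}_{(d)}$ by definition, so that only its proper singular locus can contribute to $C_2$ in the same way). Altogether $\dim_\R(C_2)\leq d-2$.

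For part (ii) the key observation is that $\cl(Y^{\widehat{\sigma}}_{(d)}\setminus\pi^{-1}(R))\subset Y^{\widehat{\sigma}}_{(d)}$, because $Y^{\widehat{\sigma}}_{(d)}$ is closed. Since $C_1\cap Y^{\widehat{\sigma}}=\varnothing$ by the very definition of $C_1$, we get
$$
A_1\subset\cl(C_1)\cap Y^{\widehat{\sigma}}\subset\cl(C_1)\setminus C_1,
$$
and since $C_2\cap Y^{\widehat{\sigma}}_{(d)}=\varnothing$,
$$
A_2\subset\cl(C_2)\cap Y^{\widehat{\sigma}}_{(d)}\subset\cl(C_2)\setminus C_2.
$$
Now $C_1$ and $C_2$ are $C$-semianalytic (being boolean combinations of $C$-analytic and $C$-semianalytic sets already introduced), so their frontiers drop dimension by at least one, giving
$$
\dim_\R(A_1)\leq\dim_\R(C_1)-1\leq d-2,\qquad\dim_\R(A_2)\leq\dim_\R(C_2)-1\leq d-3.
$$
Finally, $\pi$ has finite fibres so it does not raise real dimension, and the bounds transfer to $\pi(A_1)$ and $\pi(A_2)$. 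The main obstacle I anticipate is a clean treatment of the $d-2$ bound on $C_2$: one needs to rule out that some component of $Y'^{\widehat{\sigma}}$ contributes a $(d-1)$-dimensional piece, which is precisely where the assumption that the relevant components of $Y'$ have already been separated out (or, in the application to Theorem \ref{ncp0}, that $Z_k$ is pure $k$-dimensional so $Y_k'=\varnothing$) must be used carefully.
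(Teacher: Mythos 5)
Your proposal follows the paper's argument in its broad outline, but two points deserve a closer look. For $C_1$, the paper argues directly: if $y\in C_1$ then $Y_{\widehat\sigma(y)}\neq Y_y$, so $\pi(Y_y)$ and $\sigma(\pi(Y_y))=\pi(Y_{\widehat\sigma(y)})$ are \emph{distinct} irreducible components of $\widetilde X_x$, whence $X_x\cap\pi(Y_y)\subset\pi(Y_y)\cap\pi(Y_{\widehat\sigma(y)})$ has real dimension $\leq d-1$, and $\dim C_{1,y}\leq d-1$ follows because $\pi$ has finite fibres. Your contrapositive via Lemma \ref{gmtiv312}(i) is a valid alternative, but the clause ``its real trace is the corresponding irreducible component of $X_{\pi(y)}$, also of dimension $d$'' is not a consequence of $\pi$ being finite and generically biholomorphic; it needs an argument. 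The correct chain is: $\dim\pi^{-1}(X)_y=d$ and finiteness of $\pi$ give $\dim(X_x\cap\pi(Y_y))=d$; a $d$-dimensional irreducible component $T_x$ of $X_x$ then lies inside $\pi(Y_y)$; by Lemma \ref{gmtiv312}(i) its complexification $\widetilde T_x$ is an irreducible component of $\widetilde X_x$ contained in the irreducible component $\pi(Y_y)$, hence equal to it; and only then does Lemma \ref{gmtiv312}(i) yield $y\in Y^{\widehat\sigma}$. For $C_2$, your tangent-space argument is exactly the reason behind the paper's (unjustified) assertion $C_2\subset\Sing(Y^{\widehat\sigma})$, and the worry about $Y'$ is legitimate: a smooth $\widehat\sigma$-fixed point of a $(d-1)$-dimensional component of $Y'$ lies in $C_2$ but not in $\Sing(Y^{\widehat\sigma})$, so the appeal to ``only its proper singular locus can contribute'' does not hold; both your proof and the paper's implicitly use $Y'=\varnothing$, which (as you note at the end) is what actually occurs in Theorem \ref{ncp0} since each $Z_k$ is pure $k$-dimensional. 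Part (ii) coincides with the paper's: $A_i\subset\cl(C_i)\setminus C_i$ because $C_i\cap Y^{\widehat\sigma}_{(d)}=\varnothing$, and the frontier of a semianalytic set drops dimension.
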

\begin{proof}
We claim: \em If $y\in C_1$, it holds $\dim_\R(C_{1,y})\leq d-1$\em. Consequently, $\dim_\R(C_1)\leq d-1$

If $y\in C_1$, we have $\widehat{\sigma}(y)\neq y$ and $\widehat{\sigma}(Y_y)=Y_{\widehat{\sigma}(y)}$. Observe that $x:=\pi(y)=\pi(\sigma(y))$, so $\pi(Y_y)\cup\pi(Y_{\sigma(y)})\subset\widetilde{X}_x$ and $\pi(Y_y)$, $\pi(Y_{\sigma(y)})$ are two different irreducible components of $\widetilde{X}_x$. As these irreducible components are conjugated, 
$$
X_x\cap\pi(Y_y)=X_x\cap\pi(Y_y)\cap\sigma(\pi(Y_y))=X_x\cap\pi(Y_y)\cap\pi(Y_{\sigma(y)}),
$$
so $\dim_\R(X_x\cap\pi(Y_y))\leq d-1$. Thus, $C_{1,y}\subset\pi^{-1}(X)_y\subset Y_y$ has dimension $\leq d-1$ because $\pi$ is proper and has finite fibers.

As $Y$ is a normal complex analytic space, 
$$
\dim_\C(\Sing(Y))\leq\dim_\C(Y)-2=d-2. 
$$
Consequently, as $\Sing(Y^{\widehat{\sigma}})\subset\Sing(Y)\cap Y^{\widehat{\sigma}}$, we have $\dim_\R(\Sing(Y^{\widehat{\sigma}}))\leq d-2$. As $C_2\subset\Sing(Y^{\widehat{\sigma}})$, also $\dim_\R(C_2)\leq d-2$. 

Observe that $C_i\cap A_i\subset C_i\cap Y^{\widehat{\sigma}}_{(d)}=\varnothing$, so $A_i\subset\cl(C_i)\setminus C_i$. Thus, $\dim_\R(A_i)<\dim_\R(C_i)$ for $i=1,2$ (use \cite[VIII.2.11]{abr}). We conclude $\dim_\R(A_1)\leq d-2$ and $\dim_\R(A_2)\leq d-3$.

As $\pi(A_1)$ and $\pi(A_2)$ are $C$-semianalytic subsets of $M$ and they are the images of $A_1$ and $A_2$ under a proper analytic map with finite fibers, $\dim_\R(\pi(A_1))\leq d-2$ and $\dim_\R(\pi(A_2))\leq d-3$, as required.
\end{proof}

\begin{prop}\label{cluenk}
A point $x\in X$ belongs to $N_d(X,R)$ if and only if the germ $X_x$ has a non-coherent irreducible component $T_x$ of dimension $d$ such that $\dim(T_x\setminus R_x)=d$.
\end{prop}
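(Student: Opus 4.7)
\emph{Proof plan.} The underlying idea is to set up a precise dictionary between the points $y \in \pi^{-1}(x)$ and the irreducible components of $\widetilde{X}_x$, and then translate the two conditions defining $N_d(X,R)$ into conditions on a single such component $T_x$. Since $Y$ is normal, each germ $Y_y$ is irreducible, so $y \mapsto \pi(Y_y)$ is a bijection between $\pi^{-1}(x)$ and the set of irreducible components of $\widetilde{X}_x$. By Lemma \ref{gmtiv312}(i) together with the contrapositive of Lemma \ref{gmtiv312}(ii), this restricts to a bijection between $\pi^{-1}(x) \cap Y^{\widehat{\sigma}}_{(d)}$ and the irreducible components $T_x$ of $X_x$ of dimension $d$, via $\widetilde{T}_x = \pi(Y_y)$ and $T_x = \widetilde{T}_x \cap X$.

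Given this dictionary I would prove the two implications by direct translation. Assume first that $x \in N_d(X,R)$, so $x = \pi(y)$ for some $y \in A_1 \cup A_2$. Because $A_i \cap C_i = \varnothing$ (see the proof of Lemma \ref{dimnk}) and $Y^{\widehat{\sigma}}_{(d)}$ is closed, $y$ lies in $Y^{\widehat{\sigma}}_{(d)}$, which via the dictionary yields a distinguished irreducible component $T_x$ of $X_x$ of dimension $d$. The membership $y \in \cl(C_1) \cup \cl(C_2)$ is converted to non-coherence of $T_x$ as follows: shrink to an invariant neighborhood $V'$ of $y$ in $Y$ meeting $\pi^{-1}(x)$ only at $y$, and pick an invariant open neighborhood $V \subset \widetilde{X}$ of $x$ with $\pi^{-1}(V) \subset V'$ (possible since $\pi$ is proper). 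Applying Lemma \ref{gmtiv313} to the germ of the $C$-analytic set $T := \widetilde{T} \cap X$, where $\widetilde{T}$ is the irreducible component of $\widetilde{X}$ through $x$ with germ $\widetilde{T}_x$, together with the equivalence recalled in the preliminaries between coherence at $x$ and coherence of each irreducible component of $X_x$, identifies failure of condition (i) (resp.\ (ii)) of Lemma \ref{gmtiv313} near $y$ with non-coherence of $T_x$. Finally, the membership $y \in \cl(Y^{\widehat{\sigma}}_{(d)} \setminus \pi^{-1}(R))$ provides a sequence $y_k \to y$ with $y_k \in Y^{\widehat{\sigma}}_{(d)} \setminus \pi^{-1}(R)$, whose image $\pi(y_k)$ is a sequence in $T_x \setminus R_x$ accumulating at $x$ with local dimension $d$, forcing $\dim(T_x \setminus R_x) = d$.

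The reverse implication runs by the same translation backwards: starting from a non-coherent $T_x$ of dimension $d$ with $\dim(T_x \setminus R_x) = d$, take the unique $y \in \pi^{-1}(x)$ with $\pi(Y_y) = \widetilde{T}_x$; the dictionary places $y$ in $Y^{\widehat{\sigma}}_{(d)}$; the component-wise coherence criterion just described places $y$ in $\cl(C_1) \cup \cl(C_2)$; and the dimensional hypothesis lifts via $\pi$ (which is an analytic diffeomorphism between the regular loci of $Y$ and $\widetilde{X}$) to place $y$ in $\cl(Y^{\widehat{\sigma}}_{(d)} \setminus \pi^{-1}(R))$. Hence $y \in A_1 \cup A_2$ and $x = \pi(y) \in N_d(X,R)$.

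The main technical subtlety is that Lemma \ref{gmtiv313} is stated only for germs all of whose irreducible components have dimension $d$, whereas here $X_x$ may carry components of varying dimensions and we must analyze the single component $T_x$ in isolation. This is resolved by intersecting $X$ with the irreducible component $\widetilde{T}$ of $\widetilde{X}$ near $x$ and reducing the question to that piece via the equivalence between global coherence and coherence of every irreducible component of the germ. The role of normality of $Y$ is precisely to guarantee, through irreducibility of $Y_y$, that a single branch of $\pi^{-1}(X)$ near $y$ corresponds to the single component $T_x$ we wish to analyze.
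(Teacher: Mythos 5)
Your overall strategy is the same as the paper's: you pass to the normalization $\pi:Y\to\widetilde X$, use Lemma \ref{gmtiv312} to relate the points of $\pi^{-1}(x)\cap Y^{\widehat\sigma}_{(d)}$ to the $d$-dimensional irreducible components of $X_x$, and use Lemma \ref{gmtiv313} together with the sets $C_1,C_2,A_1,A_2$ to translate coherence. What you do differently is purely expository: you package the argument as a bijection (``dictionary'') plus two biconditionals --- non-coherence of $T_x$ $\Leftrightarrow$ $y\in\cl(C_1)\cup\cl(C_2)$, and $\dim(T_x\setminus R_x)=d$ $\Leftrightarrow$ $y\in\cl(Y^{\widehat\sigma}_{(d)}\setminus\pi^{-1}(R))$ --- whereas the paper proves the same facts as two contrapositive implications, reusing a single neighborhood construction (step \ref{neigh88}) in both. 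Your framing is cleaner to state, but the underlying lemmas, dimension counts, and local reduction to a single branch are identical.

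There is, however, a concrete technical error in your local reduction. You write: ``shrink to an invariant neighborhood $V'$ of $y$ in $Y$ meeting $\pi^{-1}(x)$ only at $y$, and pick an invariant open neighborhood $V\subset\widetilde X$ of $x$ with $\pi^{-1}(V)\subset V'$ (possible since $\pi$ is proper).'' This is impossible whenever $\pi^{-1}(x)$ contains more than one point: for any neighborhood $V$ of $x$, $\pi^{-1}(V)$ must contain a neighborhood of \emph{each} $y_i\in\pi^{-1}(x)$, so it can never be contained in a neighborhood $V'$ of $y$ alone. Properness gives the weaker (and sufficient) statement that one may choose $V$ so that $\pi^{-1}(V)=W_1\sqcup\cdots\sqcup W_r$ is a disjoint union of invariant open neighborhoods $W_i$ of $y_i$; one then restricts $\pi$ to $W_1$, which is open \emph{and closed} in $\pi^{-1}(V)$, hence $\pi|_{W_1}:W_1\to V$ is proper, $\pi(W_1)$ is analytic by Remmert's theorem, and $\pi|_{W_1}$ is the normalization of $\pi(W_1)$; Lemma \ref{gmtiv313} is then applied to $B:=\pi(W_1)\cap X$ (a representative of $T_x$), not to $X$ itself. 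Your closing paragraph shows you are aware that the component must be isolated, so this is a slip in the neighborhood bookkeeping rather than a conceptual misunderstanding, but as written that sentence would not compile into a correct argument and must be replaced by the disjoint-union construction. A further, minor, imprecision: $\pi$ is a biholomorphism over $\Reg(\widetilde X)$ (i.e.\ off $\pi^{-1}(\Sing\widetilde X)$), not ``between the regular loci of $Y$ and $\widetilde X$''; the dimension lift in part (b) is really the finite-fiber dimension count of the paper's step 5.B.2 rather than a diffeomorphism argument.
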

\begin{proof}
The proof is conducted in several steps. 

\paragraph{} We prove first the `only if' part of the statement. To that end, it is enough to show the following: \em Let $x\not\in N_d(X,R)$ be such that the germ $X_x$ has an irreducible component $B_x$ of dimension $d$ and $\dim(B_x\setminus R_x)=d$. Then, $B_x$ is coherent\em. 

By Lemma \ref{gmtiv312}(i) it holds that the complexification $\widetilde{B}_x$ of $B_x$ is an irreducible component of $\widetilde{X}_x$. In addition, there exists a point $y\in Y^{\widehat{\sigma}}$ such that $\pi(Y_y)=\widetilde{B}_x$ and $\dim(Y^{\widehat{\sigma}}_y)=d$. 

\paragraph{} Let us check: $y\in Y^{\widehat{\sigma}}_{(d)}\setminus(\cl(C_1)\cup\cl(C_2))$. 

As $\dim(Y^{\widehat{\sigma}}_y)=d$, it holds $y\in Y^{\widehat{\sigma}}_{(d)}$. We prove next $\dim(Y^{\widehat{\sigma}}_{(d),y}\setminus\pi^{-1}(R)_y)=d$. 

Indeed, $B_x=X_x\cap\pi(Y_y)$, so 
\begin{multline*}
\pi^{-1}(B_x)\cap Y_y=\pi^{-1}(X_x)\cap\pi^{-1}(\pi(Y_y))\cap Y_y=\pi^{-1}(X)_y\cap Y_y\\
=\pi^{-1}(X)_y=(\pi^{-1}(X)\setminus Y^{\widehat{\sigma}})_y\cup(Y^{\widehat{\sigma}}_y\setminus Y^{\widehat{\sigma}}_{(d),y})\cup Y^{\widehat{\sigma}}_{(d),y}=C_{1,y}\cup C_{2,y}\cup Y^{\widehat{\sigma}}_{(d),y}.
\end{multline*}
Consequently,
$$
\pi^{-1}(B_x\setminus R_x)\cap Y_y=(\pi^{-1}(B_x)\cap Y_y)\setminus\pi^{-1}(R)_y\subset C_{1,y}\cup C_{2,y}\cup(Y^{\widehat{\sigma}}_{(d),y}\setminus\pi^{-1}(R)_y)
$$
As $\pi(\pi^{-1}(B_x\setminus R_x)\cap Y_y)=B_x\setminus R_x$, $\dim(B_x\setminus R_x)=d$ and $\pi$ has finite fibers, we deduce 
$$
C_{1,y}\cup C_{2,y}\cup(Y^{\widehat{\sigma}}_{(d),y}\setminus\pi^{-1}(R)_y)
$$
has dimension $d$. By Lemma \ref{dimnk}(i) $\dim(C_{1,y}\cup C_{2,y})\leq d-2$, so $\dim(Y^{\widehat{\sigma}}_{(d),y}\setminus\pi^{-1}(R)_y)=d$.

In particular, $y\in\cl(Y^{\widehat{\sigma}}_{(d)}\setminus\pi^{-1}(R))\subset Y^{\widehat{\sigma}}_{(d)}$. If $y\in\cl(C_i)$, then $y\in A_i$, so $x\in\pi(A_i)\subset N_d(X,R)$, which is a contradiction. Thus, $y\in Y^{\widehat{\sigma}}_{(d)}\setminus(\cl(C_1)\cup\cl(C_2))$. 

\paragraph{}\label{neigh88} Write $\pi^{-1}(x)=\{y_1,\ldots,y_r\}$ and assume $y_1=y$. Let $W_i\subset Y$ be an invariant open neighborhood of $y_i$ and let $V$ be an open neighborhood of $x$ such that $\pi^{-1}(V)=\bigcup_{i=1}^rW_i$ and $W_i\cap W_j=\varnothing$ if $i\neq j$. We may assume that $W_1$ is connected and $W_1\cap(\cl(C_1)\cup\cl(C_2))=\varnothing$. As $W_1$ is closed in $\pi^{-1}(V)$ and $\pi:Y\to\widetilde{X}$ is proper, the restriction $\pi:W_1\to V$ is proper, so by Remmert's Theorem the set $\pi(W_1)$ is a complex analytic subset of $V$. Observe that $\pi:W_1\to\pi(W_1)$ is the normalization of $\pi(W_1)$. The set $B:=\pi(W_1)\cap X$ is a representative of $B_x$. As $\dim(\pi(W_1))=\dim(B)$ and $\pi(W_1)$ is irreducible (recall that $W_1$ is normal and connected), $\pi(W_1)$ is a complexification of $B$.

\paragraph{} We claim: $B^{\widehat{\sigma}}:=\{y\in W_1:\ \widehat{\sigma}(y)=y\}=(\pi|_{W_1})^{-1}(B)$.

Indeed, observe that $B^{\widehat{\sigma}}=Y^{\widehat{\sigma}}\cap W_1$ and 
$$
(\pi|_{W_1})^{-1}(B)=\pi^{-1}(\pi(W_1))\cap\pi^{-1}(X)\cap W_1=\pi^{-1}(X)\cap W_1.
$$
Thus, we have to prove $Y^{\widehat{\sigma}}\cap W_1=\pi^{-1}(X)\cap W_1$. As $W_1\cap\cl(C_1)=\varnothing$, we have $(\pi^{-1}(X)\setminus Y^{\widehat{\sigma}})\cap W_1=\varnothing$, so $Y^{\widehat{\sigma}}\cap W_1=\pi^{-1}(X)\cap W_1$.

\paragraph{} Since $W_1\cap\cl(C_2)=\varnothing$, we have $(Y^{\widehat{\sigma}}\setminus Y^{\widehat{\sigma}}_{(d)})\cap W_1=\varnothing$, so $\dim(Y^{\widehat{\sigma}}_z)=d$ for each $z\in(\pi|_{W_1})^{-1}(a)$ where $a\in B$. By Lemma \ref{gmtiv313} the irreducible component $B_x$ is coherent.

\paragraph{} Next we show the `if' part of the statement. To that end, we prove: \em If all the non-coherent irreducible components $B_x$ of $X_x$ of dimension $d$ satisfy $\dim(B_x\setminus R_x)<d$, then $x\not\in N_d(X,R)$\em, or equivalently, $\pi^{-1}(x)\cap(A_1\cup A_2)=\varnothing$. 

Let $y\in\pi^{-1}(x)$. If $y\not\in Y^{\widehat{\sigma}}_{(d)}$, then $y\not\in A_1\cup A_2$. So let us assume $y\in Y^{\widehat{\sigma}}_{(d)}$. By Lemma \ref{gmtiv312}(ii) we deduce $\dim(\pi(Y_y)\cap X_x)=d$. Consequently, $B_x:=\pi(Y_y)\cap X_x$ is an irreducible analytic germ of dimension $d$ contained in $X_x$. Thus, $B_x$ is an irreducible component of $X_x$ and $\pi(Y_y)$ is the complexification of $B_x$. In particular $\pi(Y^{\widehat{\sigma}}_y)\subset B_x$. We distinguish two cases: 

\noindent{\bf Case 1.} \em $B_x$ is non-coherent\em. Then $\dim(B_x\setminus R_x)<d$, so $\dim(\pi(Y^{\widehat{\sigma}}_y)\setminus R_x)<d$. Denote $E_y:=Y^{\widehat{\sigma}}_{(d),y}\setminus\pi^{-1}(R)_y$. As $\pi$ has finite fibers and $\pi(E_y)\subset\pi(Y^{\widehat{\sigma}}_y)\setminus R_x$, we have $\dim(E_y)<d$. As $R$ is closed and $\dim(E_y)<d$,
$$ 
Y^{\widehat{\sigma}}_{(d),y}=\cl(Y^{\widehat{\sigma}}_{(d),y}\setminus E_y)\subset\cl(\pi^{-1}(R)_y)=\pi^{-1}(R)_y.
$$
Consequently, $Y^{\widehat{\sigma}}_{(d),y}\setminus\pi^{-1}(R)_y=\varnothing$, so $y\not\in\cl(Y^{\widehat{\sigma}}_{(d)}\setminus\pi^{-1}(R))$. We conclude $y\not\in A_1\cup A_2$. 

\noindent{\bf Case 2.} \em $B_x$ is coherent\em. Proceeding as we have done in \ref{neigh88} we find an invariant neighborhood $W_1$ of $y$ in $Y$ such that $\pi(W_1)$ is an irreducible complex analytic subset of an invariant neighborhood $V$ of $x$ in $X$ and $B:=\pi(W_1)\cap X$ is a representative of $B_x$. As $B_x$ is coherent, by Lemma \ref{gmtiv313} we may shrink $V$ and $W_1$ to have 
\begin{itemize}
\item[(1)] $\pi^{-1}(X)\cap W_1=Y^{\widehat{\sigma}}\cap W_1$ and 
\item[(2)] $\dim(Y^{\widehat{\sigma}}_z)=d$ for every $z\in\pi^{-1}(a)$ and $a\in B\cap V$.
\end{itemize}
Condition (1) is equivalent to $(\pi^{-1}(X)\setminus Y^{\widehat{\sigma}})\cap W_1=\varnothing$, so $y\not\in \cl(C_1)$. Condition (2) means $y\not\in\cl(C_2)$. Consequently, $y\not\in\cl(C_1)\cup\cl(C_2)$, so $y\not\in A_1\cup A_2$. 

Thus, $\pi^{-1}(x)\cap(A_1\cup A_2)=\varnothing$, as required. 
\end{proof}

\subsection{Proof of Theorem \ref{ncp0}}
Given a set germ $S_x\subset\R^n_x$, we define its Zariski closure $\ol{S_x}^{\zar}$ as the smallest analytic germ at $x$ that contains $S_x$. Before proving Theorem \ref{ncp0} we state the following easy fact concerning germs that will be used several times in its proof.

\begin{lem}\label{easy}
Let $A_x\subset B_x\subset\R^n_x$ be analytic germs and let $T_x$ be an irreducible component of $B_x$ such that $T_x\subset A_x$. Then $T_x$ is also an irreducible component of $A_x$.
\end{lem}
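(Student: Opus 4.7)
The proof amounts to unpacking the definition of irreducible component and running a one-line maximality argument, so there is essentially no serious obstacle; the plan is just to make the logical dependence transparent.

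First I would recall the characterization of irreducible components of an analytic germ $B_x \subset \R^n_x$: the irreducible decomposition $B_x = \bigcup_{i \in I} T_{i,x}$ is uniquely determined by the property that each $T_{i,x}$ is an irreducible analytic subgerm of $B_x$ that is maximal with respect to this property (this is the standard local theory, see for instance the references used in \ref{cas} above). In particular, the hypothesis that $T_x$ is an irreducible component of $B_x$ means precisely that $T_x$ is irreducible and that no strictly larger irreducible analytic germ fits between $T_x$ and $B_x$.

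Next I would verify that the same maximality persists when we replace $B_x$ by $A_x$. The germ $T_x$ is irreducible and contained in $A_x$ by hypothesis, so it is an irreducible analytic subgerm of $A_x$. Assume that $T'_x$ is an irreducible analytic germ with
\[
T_x \subset T'_x \subset A_x.
\]
Since $A_x \subset B_x$, we have $T'_x \subset B_x$, so $T'_x$ is an irreducible analytic subgerm of $B_x$ containing $T_x$. The maximality of $T_x$ as an irreducible component of $B_x$ forces $T'_x = T_x$.

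Finally, combining these two observations yields that $T_x$ is a maximal irreducible analytic subgerm of $A_x$, hence an irreducible component of $A_x$. The only point requiring any care is invoking the correct characterization of irreducible components for (possibly non-coherent) analytic germs; once this is in place, the argument is purely set-theoretic maximality.
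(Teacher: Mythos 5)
Your proof is correct and follows essentially the same route as the paper: both arguments reduce to the maximality characterization of irreducible components among irreducible subgerms, transported from $B_x$ to $A_x$ via the inclusion $A_x\subset B_x$. The paper phrases it by choosing irreducible components $T'_x$ of $A_x$ and $T''_x$ of $B_x$ containing $T_x$ and squeezing, whereas you verify maximality directly against an arbitrary irreducible $T'_x$; the two formulations are interchangeable.
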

\begin{proof}
Let $T'_x$ be an irreducible component of $A_x$ such that $T_x\subset T_x'$. As $A_x\subset B_x$, there exists an irreducible component $T_x''$ of $B_x$ such that $T_x\subset T_x'\subset T_x''$. Consequently $T_x=T_x'=T_x''$, so $T_x$ is an irreducible component of $A_x$. 
\end{proof}

\begin{proof}[Proof of Theorem \em\ref{ncp0}]
Observe that by its definition and Proposition \ref{salvation} each $C$-semianalytic set $R_k$ is ${\mathcal A}(\widetilde{X}^\sigma)$-definable. Consequently, statement (i) follows from \ref{comment} and Lemma \ref{dimnk}(ii), so it remains to prove statements (ii) and (iii). We have to show: \em $N(X)=\bigcup_{k=2}^dN_k(Z_k,R_k)$ where \em
\begin{itemize}\em
\item[$\bullet$] $Z_k:=\bigcup_{Z\in{\mathfrak F} _k}Z$, 
\item[$\bullet$] ${\mathfrak F} _k$ is the collection of all the irreducible $C$-semianalytic subsets $Z$ of $M$ of dimension $k$ that are an irreducible component of $\Sing_\ell(X)$ for some $\ell\geq0$,
\item[$\bullet$] $R_k:=\bigcup_{j=k+1}^dZ_{j,(j)}$ where $Z_{j,(j)}:=\{z\in Z_j:\ \dim_{\R}(Z_j)=j\}=\cl(Z_j\setminus\Sing(Z_j))$.
\end{itemize}
In addition, we have to prove: \em $\bigcup_{k=j}^dN_k(Z_k,R_k)$ is the set of points of $X$ such that the germ $X_x$ has a non-coherent irreducible component of dimension $\geq j$\em.

The proof is conducted in several steps.\setcounter{paragraph}{0}

\paragraph{}\label{ze} Let $x\in N(X)$ and let $T_x$ be an irreducible component of $X_x$ of dimension $e$ that it is non-coherent. Recall that $e\geq2$ because $C$-analytic curves are coherent. We claim: \em $T_x$ is an irreducible component of $Z_{e,x}$\em.

Let $Z$ be an irreducible component of $X$ such that $T_x\subset Z_x$. As $Z_x\subset X_x$, we have \em $T_x$ is an irreducible component of $Z_x$ \em by Lemma \ref{easy}. 
As $Z=\bigcup_{\ell\geq0}\Reg(\Sing_\ell(Z))$,
$$
Z_x=\bigcup_{\ell\geq0}\ol{(\Reg(\Sing_\ell(Z)))_x}^{\zar}.
$$
As $T_x$ is an irreducible component of $Z_x$, there exists $\ell\geq1$ such that 
$$
T_x\subset\ol{(\Reg(\Sing_\ell(Z)))_x}^{\zar}\subset Z_x. 
$$
Thus, $T_x$ is by Lemma \ref{easy} an irreducible component of $\ol{(\Reg(\Sing_\ell(Z)))_x}^{\zar}$. Since all the irreducible components of $\ol{(\Reg(\Sing_\ell(Z)))_x}^{\zar}$ have the same dimension and $\dim(T_x)=e$, we deduce $\dim(\Sing_\ell(Z))=e$. By Lemma \ref{easy} there exists an irreducible component $Z'$ of $\Sing_\ell(Z)$ (of dimension $e$) such that $T_x$ is an irreducible component of $Z'_x$. Observe that $Z'\in{\mathfrak F} _e$, so $T_x$ is an irreducible component of $Z_{e,x}$. 

\paragraph{}\label{ne} We claim: $x\in N_e(Z_e,R_e)$ or equivalently by Proposition \ref{cluenk} \em the germ $Z_{e,x}$ has a non-coherent irreducible component $B_x$ of dimension $e$ such that $\dim(B_x\setminus R_{e,x})=e$\em. It is enough to check: $\dim(T_x\setminus R_{e,x})=e$. 

Otherwise, $\dim(T_x\setminus R_{e,x})<e$, so
$$
T_x\subset\ol{R_{e,x}}^{\zar}=\bigcup_{j=e+1}^d\ol{Z_{j,(j),x}}^{\zar}.
$$
Consequently, there exists $e+1\leq j\leq d$ such that $T_x\subset\ol{Z_{j,(j),x}}^{\zar}\subset X_x$. By Lemma \ref{easy} $T_x$ is an irreducible component of $\ol{Z_{j,(j),x}}^{\zar}$, which is a contradiction because all the irreducible components of $\ol{Z_{j,(j),x}}^{\zar}$ have dimension $j>e$. We conclude $\dim(T_x\setminus R_{e,x})=e$.

\paragraph{}\label{geql} Thus, we have shown $N(X)\subset\bigcup_{k=2}^dN_k(Z_k,R_k)$. We prove now the converse inclusion $\bigcup_{k=2}^dN_k(Z_k,R_k)\subset N(X)$. Let $x\in N_\ell(Z_\ell,R_\ell)$ for some $2\leq\ell\leq d$ and let us show: \em $X_x$ has a non-coherent irreducible component of dimension $\geq\ell$\em. In particular, $x\in N(X)$.

As $x\in N_\ell(Z_\ell,R_\ell)$, the germ $Z_{\ell,x}$ has an irreducible component $T_x$ of dimension $\ell$ that is non-coherent and such that $\dim(T_x\setminus R_{\ell,x})=\ell$. As $Z_{\ell,x}\subset X_x$, there exists an irreducible component $A_x$ of $X_x$ that contains $T_x$. If $T_x=A_x$, then $A_x$ is non-coherent. Otherwise, $\ell=\dim(T_x)<\dim(A_x)=j$. As
$$
X_x=\bigcup_{k=0}^d\ol{Z_{k,(k),x}}^{\zar},
$$
we deduce by Lemma \ref{easy} that $A_x$ is an irreducible component of some $\ol{Z_{k,(k),x}}^{\zar}$. As all the irreducible components of $\ol{Z_{k,(k),x}}^{\zar}$ have dimension $k$, we conclude $j=k$. Consequently, $A_x$ is an irreducible component of $\ol{Z_{j,(j),x}}^{\zar}$.

Let us assume by contradiction that $A_x$ is coherent. Then $A_x\subset\ol{Z_{j,(j),x}}^{\zar}\subset Z_{j,x}$ is pure dimensional (because it is coherent), so $T_x\subset A_x\subset Z_{j,(j),x}\subset R_{\ell,x}$, which contradicts the fact $\dim(T_x\setminus R_{\ell,x})=\ell$. Consequently, $A_x$ is non-coherent, so $X_x$ has a non-coherent irreducible component and $x\in N(X)$, as required.

\paragraph{} Finally, by \ref{ze} and \ref{ne} we deduce that if $X_x$ has a non-coherent irreducible component of dimension $j$, then $x\in N_j(Z_j,R_j)$ while by \ref{geql} we get that if $x\in N_j(Z_j,R_j)$, then $X_x$ has an irreducible component of dimension $\geq j$. Consequently, statement (ii) holds. 
\end{proof}

We finish this section with two examples that illustrate some key facts of the proofs above. 

\begin{examples}
(i) In \cite{abt} it is shown that the set of points of non-coherence of the $C$-analytic set $X:=\{x^3-x^2wz-wy^2=0\}$ is
$$
N(X)=\{x=0,y=0,z=0\}\cup\{x=0,y=0,w=0,z\geq0\}, 
$$
which is a $C$-semianalytic set but not a $C$-analytic set.

(ii) Let $X_1:=\{(x^2-(z+1)y^2)^2z-u^2=0\}\subset\R^4$ and $X_2:=\{u=0\}$. Let us prove 
$$
N(X_1)=\{x^2-y^2=0,z=0,u=0\}\cup\{(0,0,-1,0)\}
$$ 
while $N(X_1\cup X_2)=\{x^2-y^2=0,z=0,u=0\}$.

We compute first $N(X_1)$. Let $Z_1:=\{(x^2-(z+1)y^2)^2z-u^2=0\}\subset\C^4$ be a complexification of $X_1$. It holds that
$$
\pi:\C^3\to\C^4,\ (x,y,v)\mapsto(x,y,v^2,v(x^2-(v^2+1)y^2)),
$$
is the normalization of $Z_1$. The singular locus of $Z_1$ is 
$$
\Sing(Z_1)=\{x^2-(z+1)y^2=0,u=0\}.
$$
It holds
$$
\pi^{-1}(X_1)=\R^3\sqcup\{(s\sqrt{1-t^2},s,it):\ 0<|t|\leq1\}\sqcup\{(0,0,it):\ |t|>1\}.
$$
Observe that 
\begin{align*}
&T_1:=\pi(\R^3)=X_1\cap\{z\geq0\},\\ 
&T_2:=\pi(\{(\pm s\sqrt{1-t^2},s,it):\ 0<|t|\leq1\})=\{(\pm s\sqrt{1-t^2},s,-t^2,0):\ 0<|t|\leq1\},\\
&T_3:=\pi(\{(0,0,it):\ |t|>1\})=\{(0,0,-t^2,0):\ |t|>1\}.
\end{align*}
Thus, $T_1$ is the set of points of $X_1$ of maximal dimension. By Theorem \ref{ncp0} 
\begin{multline*}
\pi(\R^3\cap(\{(\pm s\sqrt{1-t^2},s,-t^2,0):\ 0<|t|\leq1\}\cup\{(0,0,-t^2,0):\ |t|>1\}))\\
=\{x-y=0,z=0,u=0\}\cup\{x+y=0,z=0,u=0\}
\end{multline*}
is the set of points of $X_1$ that have a non-coherent irreducible component of dimension $3$.

To find the set of points of $X_1$ that have a non-coherent irreducible component of smaller dimension we have to look at $T_2\cup T_3=\{x^2-(z+1)y^2,z<0\}$, which is an open subset a classical Whitney umbrella. This set has only the point $(0,0,-1,0)$ as its unique non-coherence point. Consequently, 
$$
N(X_1)=\{x^2-y^2=0,z=0,u=0\}\cup\{(0,0,-1,0)\}.
$$

On the other hand, $N(X_1\cup X_2)=\{x^2-y^2=0,z=0,u=0\}$ because at these points of $X_1\cup X_2$ the corresponding germ has a non-coherent irreducible component of dimension $3$ while at the point $(0,0,-1,0)$ the unique irreducible component is $\{u=0\}$, which is coherent.
\end{examples}

\section{Subanalytic sets as proper images of basic $C$-semianalytic sets}\label{s6}
\setcounter{paragraph}{0}

We begin with some examples concerning the properties of the images of $C$-semianalytic sets under analytic maps.

\begin{example}[Image under a proper analytic map with finite fibers of a $C$-analytic set]
Consider the compact analytic set $S:=\{f=0\}$ introduced in Example \ref{counterexamples}(ii). Observe that $S\setminus\{(0,0,\pm1)\}$ is a $C$-analytic subset of $\R^3\setminus\{z=\pm1\}$ and $S_{(2)}\subset\{g:=\frac{1}{4}-x^2+y^2+z^2\geq0\}$. Define
$$
X_1:=\{(x,y,z,t)\in\R^4:\ f=0, t^2-(\tfrac{1}{4}-x^2-y^2-z^2)=0\},
$$
which is a compact $2$-dimensional $C$-analytic subset of the sphere $\sph_1:=\{x^2+y^2+z^2+t^2=\tfrac{1}{4}\}$. Define $\pi:\R^4\to\R^3, (x,y,z,t)\mapsto(x,y,z)$.

The map $\pi|_{\sph_1}$ is proper, analytic, has finite fibers and satisfies $\pi(X_1)=S\cap\{g\geq0\}$. Let $\sph_2:=\{x^2+(y-1)^2+z^2+(t-2)^2=1\}$ and $X_2:=\sph_2\cap\{x=0,t=2\}$. The map $\pi|_{\sph_2}$ is proper, analytic, has singleton fibers and satisfies $\pi(X_2)=S\cap\{x=0\}=\{x=0,(y-1)^2+z^2=1\}$. Of course $X:=X_1\cup X_2$ is a $C$-analytic subset of $M:=\sph_1\sqcup\sph_2$. Thus, $\pi|_{M}$ is proper, analytic, has finite fibers and satisfies $\pi(X_1\cup X_2)=S$, which is semianalytic but not $C$-semianalytic.
\end{example}

\begin{example}[Osgood: Subanalytic set that is not a semianalytic set]\label{osgood}
Let 
$$
f:\R^2\to\R^3,\ (x,y)\mapsto(x,xy,xe^y). 
$$
Then $S:=f(\{x^2+y^2\leq\veps^2\})$ is subanalytic but it is not semianalytic.

Observe that $S$ is subanalytic because it is the image of a compact semianalytic set under an analytic map. Let us prove next that $S$ is not semianalytic. We claim: \em If $G(u,v)\in\R[[u,v]]$ is a formal power series such that $G(x,xy,xe^y)=0$, then $G=0$\em.

Write $G(u,v,w)=\sum_{j\geq0}G_j(u,v,w)$ where $G_j(u,v,w)$ is a homogeneous polynomial of degree $j$. Then
$$
0=G(x,xy,xe^y)=\sum_{j\geq0}G_j(x,xy,xe^y)=\sum_{j\geq0}x^jG_j(1,y,e^y).
$$
Therefore, $G_j(1,y,e^y)=0$ for each $j\geq0$, so each $G_j=0$ and $G=0$.

Consequently, the smallest real analytic set containing (the germ at the origin of) $S$ is the whole $\R^2$, so $S$ is not semianalytic.
\end{example}

\begin{example}[Image under an analytic map of a $C$-analytic set]\label{notsub}
The image of a $C$-analytic set under an analytic map is not in general a subanalytic set. Let $X:=\bigcup_{k\geq1}\{(\frac{1}{k},k)\}\subset\R^2$ and let $\rho:\R^2\to\R,\ (x,y)\mapsto x$ be the projection onto the first coordinate. Then $S=\rho(X)=\bigcup_{k\geq1}\{\frac{1}{k}\}\subset\R$ is not subanalytic.

Indeed, suppose $S$ is subanalytic. Then $S':=S\setminus\{0\}$ is also subanalytic and there exists a neighborhood $U$ of the origin such that $S'\cap U$ is the projection of a relatively compact semianalytic set $A\subset M\times N$ where $N$ is a real analytic manifold. By \cite[2.7]{bm} the connected components of $A$ are finitely many. Consequently, $\pi(A)=S\cap U$ has finitely many connected components, a contradiction.
\end{example}

\subsection{Proof of Theorem \ref{sub}}
Recall first two relevant results \cite[3.12, 5.1]{bm}. Theorem \ref{sub} can be understood as a kind of global version of \cite[3.12]{bm}. Following \cite[3.5]{bm} the dimension of a subanalytic set is the highest of the dimensions of its smooth points.

\begin{prop}[{\cite[3.12]{bm}}]
Let $M$ be a real analytic manifold and let $S\subset M$ be a closed subanalytic set. Then each point of $S$ admits a neighborhood $U$ such that $S\cap U=\pi(A)$ where $A$ is a closed analytic subset of $U\times\R^q$ for some $q$, $\dim(A)=\dim(S\cap U)$ and $\pi|_{A}$ is proper (where $\pi:U\times\R^q\to U$ is the projection).
\end{prop}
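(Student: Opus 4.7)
The plan is to realize $S\cap U$ as the projection of the graph of a proper real analytic parametrization of $S\cap U$ by a real analytic manifold of the correct dimension. The two main ingredients I would invoke are (a) the \emph{Uniformization Theorem} for closed subanalytic sets (Bierstone--Milman, Theorem 0.1 of \cite{bm}), which asserts that every closed subanalytic subset $T$ of a real analytic manifold is the image $\varphi(N)$ of some proper real analytic map $\varphi:N\to M$ from a real analytic manifold $N$ with $\dim(N)=\dim(T)$, and (b) Grauert's embedding theorem, which lets us realize $N$ as a closed real analytic submanifold of $\R^q$ for some $q$.

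Fix $x_0\in S$ and let $d$ denote the local dimension of $S$ at $x_0$. First I would choose an open neighborhood $U$ of $x_0$ in $M$ with $\cl(U)$ compact, small enough that $\dim(S\cap U)=d$. Since $S$ is closed in $M$, the set $S\cap\cl(U)$ is a closed subanalytic subset of $M$ of dimension $d$; applying the Uniformization Theorem to it yields a proper real analytic map $\varphi:N\to M$ from a real analytic manifold $N$ with $\dim(N)=d$ and $\varphi(N)=S\cap\cl(U)$. Replacing $N$ by the open analytic submanifold $\varphi^{-1}(U)$ (still pure of dimension $d$ where nonempty) and restricting $\varphi$ to it gives a proper real analytic map $\varphi:N\to U$ with $\varphi(N)=S\cap U$. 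By Grauert's embedding theorem, fix a proper real analytic embedding $\iota:N\hookrightarrow\R^q$ onto a closed analytic submanifold $\iota(N)\subset\R^q$.

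Now set
$$
A:=\{(\varphi(p),\iota(p)):\ p\in N\}\subset U\times\R^q.
$$
This is the image of $N$ under the injective analytic map $\Phi:N\to U\times\R^q$ defined by $p\mapsto(\varphi(p),\iota(p))$. The map $\Phi$ is proper because $\varphi$ is proper: if $\Phi(p_k)\to(u,v)$ with $u\in U$, then $\varphi(p_k)\to u$, so for large $k$ the points $p_k$ lie in the compact set $\varphi^{-1}(K)$ for some compact neighborhood $K\subset U$ of $u$; a convergent subsequence $p_k\to p$ then satisfies $\Phi(p)=(u,v)$, so $(u,v)\in A$ and $A$ is closed in $U\times\R^q$. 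Since $\Phi$ is a proper analytic immersion, $A$ is a closed real analytic subset of $U\times\R^q$ isomorphic to $N$, so $\dim(A)=d=\dim(S\cap U)$ and $\pi(A)=\varphi(N)=S\cap U$. Finally, $\pi|_A$ is conjugate via $\Phi$ to $\varphi:N\to U$, hence proper.

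The hard step is the Uniformization Theorem itself; granted that, the construction above is essentially formal and the remaining checks (analyticity, closedness, dimension equality, properness of $\pi|_A$) are routine. Uniformization is a deep result whose standard proof rests on Hironaka's embedded resolution of singularities, applied stratum by stratum to a locally finite semianalytic stratification of $S$ (compatible with its closure and frontier) and patched globally, together with an induction on dimension to reduce the image from an arbitrary closed subanalytic set to one parametrized by a manifold of the same dimension.
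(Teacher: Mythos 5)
Your construction itself (the graph map $\Phi=(\varphi,\iota)$, transfer of properness from $\varphi$ to $\pi|_A$, and the identification $\pi(A)=S\cap U$) is sound, up to one small point: before applying uniformization you need $\dim(S\cap\cl(U))=d$, not merely $\dim(S\cap U)=d$; otherwise $N$, hence $\varphi^{-1}(U)$ and $A$, may pick up dimension from $S\cap\partial U$ and the equality $\dim(A)=\dim(S\cap U)$ fails. This is easily repaired by upper semicontinuity of the local dimension (choose $U$ with $\dim(S\cap U')=d$ for some open $U'\supset\cl(U)$), but it should be said.

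The serious issue is the logical direction of your argument. The paper does not prove this proposition; it quotes it from Bierstone--Milman \cite{bm}, where 3.12 is established by elementary ``semianalytic'' means: the fiber-cutting lemma \cite[3.6]{bm}, a local description of the relevant bounded semianalytic set by finitely many equations and inequalities, and the conversion of inequalities $g>0$ into equations $z^2-g=0$ (together with a compactifying inequality) to produce a closed analytic set $A$ with proper projection. The Uniformization Theorem for closed \emph{subanalytic} sets (your ingredient (a)) is then proved \emph{from} 3.12, by desingularizing the closed analytic sets so obtained. So deriving 3.12 from subanalytic uniformization is circular in substance: it is formally admissible only if one black-boxes BM Theorem 0.1, whose standard proof passes through the very statement you are proving. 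Two further symptoms of this: the uniformization statement actually recorded in the paper (Theorem \ref{ut}) is for closed \emph{analytic} subsets only, which does not suffice for your argument; and your closing sketch of how subanalytic uniformization is proved (``stratum by stratum on a semianalytic stratification of $S$'') is not accurate, since a subanalytic set need not be locally semianalytic --- that is precisely why 3.12 is needed as the bridge. A non-circular proof follows the scheme of Step 1 of the paper's proof of Theorem \ref{sub}: apply \cite[3.6]{bm}, add slack variables to turn inequalities into equations and obtain a closed (indeed compact) analytic set, and only then, if a manifold parametrization is wanted, invoke Theorem \ref{ut} for analytic sets.
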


\begin{thm}[Uniformization Theorem, {\cite[5.1]{bm}}]\label{ut}
Let $X$ be a closed analytic subset of $M$. Then there exists a real analytic manifold $N$ (of the same dimension as $X$) and a proper real analytic map $p:N\to M$ such that $p(N)=X$.
\end{thm}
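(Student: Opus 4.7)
The plan is to derive this statement from embedded resolution of singularities in the real analytic category, a deep result carried out in its modern functorial form by Bierstone--Milman. The strategy is to construct a proper real analytic modification $\pi:\widetilde M\to M$, realised as a locally finite composition of blow-ups with smooth admissible centres, such that the strict transform of $X$ inside $\widetilde M$ is a closed smooth real analytic submanifold of the same dimension as $X$. Setting $N$ equal to this strict transform and $p:=\pi|_N$ would then give the theorem, because $\pi$ proper together with $N$ closed in $\widetilde M$ forces $p$ to be proper, while $\pi$ being an isomorphism over the complement of the singular locus of $X$ forces $p(N)$ to contain $\Reg(X)$, hence, by closedness of the image and density of regular points in $X$, all of $X$.

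The key steps, in the order I would carry them out, are: (1) By paracompactness, cover $M$ by a locally finite family $\{U_i\}$ of relatively compact open sets on which $X\cap U_i$ coincides with the zero set of a coherent sheaf of ideals $\mathcal J_i$ on $U_i$ (generated by finitely many analytic equations cutting out $X$ on $U_i$). (2) Apply the Bierstone--Milman desingularization algorithm to each $\mathcal J_i$, producing a canonical locally finite sequence of blow-ups with smooth centres, controlled by a resolution invariant assembled from the Hilbert--Samuel function and the order, after which the strict transform of the zero set of $\mathcal J_i$ is smooth and of dimension equal to $\dim(X\cap U_i)$. (3) Invoke the functoriality of the algorithm with respect to open real analytic immersions to glue the local resolutions into a global proper real analytic modification $\pi:\widetilde M\to M$ whose strict transform of $X$ is a closed smooth real analytic submanifold $N\subset\widetilde M$ with $\dim(N)=\dim(X)$. (4) Set $p:=\pi|_N$ and verify properness and $p(N)=X$ as sketched above.

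The entire technical burden is concentrated in step (2): real analytic resolution of singularities. Two delicate points deserve mention. First, the sheaf of ideals vanishing on a general closed real analytic set $X$ need not be coherent, so one cannot simply apply resolution to the ideal sheaf of $X$; one must instead work with the locally chosen coherent ideals $\mathcal J_i$, which may disagree on overlaps, and rely on functoriality of the algorithm to produce compatible blow-ups. Second, to stay within the real analytic category one needs the centres of the blow-ups to be real; in the Bierstone--Milman proof this is secured by the observation that the numerical resolution invariants are invariant under complex conjugation, so the centres prescribed by the algorithm on a local complexification of $X$ descend to real smooth submanifolds of $M$. Granting these real-analytic refinements, which are the substantive content of \cite{bm}, the uniformization statement follows almost immediately by the gluing and image argument above.
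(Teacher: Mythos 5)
First, note that the paper does not prove this statement at all: it is quoted verbatim from Bierstone--Milman \cite[Thm.~5.1]{bm}, so your proposal can only be measured against the known proof there, not against an argument in this article.

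Your plan has a genuine gap at the decisive last step. You take $N$ to be the \emph{strict transform} of $X$ under a modification $\pi$ that is an isomorphism off $\Sing(X)$, and you conclude $p(N)=X$ from ``closedness of the image and density of regular points in $X$''. But for real analytic sets the regular points are \emph{not} dense in $X$ in general: this is exactly the non-coherence phenomenon the present paper is about. For Whitney's umbrella $X=\{x^2-zy^2=0\}\subset\R^3$ the handle $\{x=0,y=0,z<0\}$ lies in $X$ but not in $\cl(\Reg(X))$; Cartan's umbrella behaves the same way. Any strict-transform construction only recovers the closure of $X$ minus the blow-up centres, i.e.\ at best $\cl(X\setminus\Sing(X))$, so your $p(N)$ misses the lower-dimensional ``sticks'' and is a proper subset of $X$ in these examples. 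This is why the actual proof in \cite{bm} does not stop at a strict transform: one works (locally) with $f=f_1^2+\cdots+f_k^2$, makes $f\circ\pi$ normal crossings and uses the \emph{total} preimage $\pi^{-1}(X)=\{f\circ\pi=0\}$ together with an induction on dimension to sweep up the pieces of $X$ that are not in the closure of the top-dimensional regular part, while keeping $\dim N=\dim X$ and properness.

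A secondary problem is the gluing in step (3). Functoriality of the Bierstone--Milman algorithm is functoriality in the \emph{ideal} (pulled back under open immersions), not in its zero set; your local coherent ideals $\mathcal J_i$ are only required to have the same zero set on overlaps, so the prescribed centres and blow-up towers need not agree there, and a general closed analytic $X$ is not the zero set of a global coherent sheaf of ideals (that is precisely the $C$-analytic condition), so you cannot simply pass to a global ideal either. Both issues --- the lost lower-dimensional strata and the patching of local resolutions --- are the substantive content of \cite[\S5]{bm}, and your argument as written does not supply them.
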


We are ready to prove Theorem \ref{sub}.

\begin{proof}[Proof of Theorem \em \ref{sub}]
The implication (ii) $\Longrightarrow$ (iii) is immediate. Let us prove (iii) $\Longrightarrow$ (i). We have to show: \em each point of $M$ admits a neighborhood $U$ such that $S\cap U$ is a projection of a relatively compact semianalytic set\em. 

Let $\Gamma_f$ be the graph of $f$ and let $\pi:M\times N\to N$ be the projection onto the second factor. Let $T':=\Gamma_f\cap(T\times N)$ and $C:=\Gamma_f\cap(\cl(T)\times N)$, which are semianalytic subset of $M\times N$. 

\paragraph{} We claim: $C=\cl(T')$. 

Only the inclusion $C\subset\cl(T')$ requires a comment. Pick $(x,f(x))\in C$ and let $U\times V$ be a neighborhood of $(x,f(x))$ in $M\times N$. As $f$ is continuous, we may assume $f(U)\subset V$. As $x\in\cl(T)$, there exists $x'\in T\cap U$, so $(x',f(x'))\in(U\times V)\cap(\Gamma_f\cap(T\times N))$. Consequently $(x,f(x))\in\cl(T')$.

\paragraph{} Let us prove: \em The restriction map $\pi|_{C}:C\to N$ is proper\em. 

Indeed, $\Gamma_f$ is a real analytic submanifold of $M\times N$ and it is analytically diffeomorphic to $M$ via the restriction to $\Gamma_f$ of the projection $\rho:M\times N\to M$. Let $K$ be a compact subset of $N$ and observe that 
$$
f^{-1}(K)\cap\cl(T)=\rho(\Gamma_f\cap(\cl(T)\times N)\cap(M\times K))=\rho(C\cap\pi^{-1}(K))=\rho(\pi|_{C}^{-1}(K)). 
$$
As $f|_{\cl(T)}:\cl(T)\to N$ is proper, $f^{-1}(K)\cap\cl(T)$ is compact. As $\rho|_{\Gamma_f}:\Gamma_f\to M$ is an analytic diffeomorphism, $\pi|_{C}^{-1}(K)$ is compact. Consequently, $\pi|_{C}$ is proper.

\paragraph{} Let $y\in N$ and let $U$ be an open semianalytic neighborhood of $y$ in $N$ such that $K:=\cl(U)$ is compact. As $\pi|_C$ is proper, $\pi|_C^{-1}(K)$ is compact. As $f(T)=S$, it holds $\pi(T')=S$, so $\pi(T'\cap\pi^{-1}(U))=S\cap U$. It only remains to prove: \em $A:=T'\cap\pi^{-1}(U)$ is a relatively compact semianalytic set\em. 

As $T'$ is a semianalytic subset of $M\times N$ and $U$ is a semianalytic subset of $N$, we have that $A$ is a semianalytic subset of $M\times N$. To prove that $A$ is relatively compact, we only need to show that it is contained in a compact subset of $M\times N$. Indeed,
$$
A=T'\cap\pi^{-1}(U)\subset C\cap\pi^{-1}(U)=\pi|_C^{-1}(U)\subset\pi|_C^{-1}(K),
$$
which is a compact set because $\pi|_C$ is proper.

We prove next (i) $\Longrightarrow$ (ii). Let $S$ be a subanalytic subset of $N$. 

\noindent{\bf Step 1.} {\em Local construction.} Fix $x_0\in N$ and denote $n:=\dim(N)$. In this step we prove: \em there exist and open neighborhood $U\subset N$ of $x_0$, a compact real analytic manifold $M\subset\R^{2n+1}$, an analytic function $g\in\an(M)$ and an analytic map $\pi:M\to N$ such that 
$$
\pi(\{g>0\}\cap\pi^{-1}(U))=S\cap U.
$$\em

As $S$ is subanalytic, there exist an open neighborhood $U\subset N$ of $x_0$, a real analytic manifold $N'$ and a relatively compact semianalytic subset $A$ of $N\times N'$ such that $S\cap U=\pi_1(A)$ where $\pi_1:N\times N'\to N$ is the projection onto the first factor. We can suppose $\dim(S\cap V)=\dim(S\cap U)$ for each open neighborhood $V\subset U$ of $x_0$. 

\paragraph{}\label{ab} We may assume: \em $\dim(A)=\dim(S\cap U)$ and $\pi_1^{-1}(x_0)\cap A$ is a finite set\em.

By \cite[3.6]{bm} there exist finitely many smooth semianalytic subsets $B_k$ of $A$ such that
\begin{itemize}
\item $S\cap U=\pi_1(A)=\pi_1(\bigcup_kB_{k})$.
\item For each $B_{k}$ the restriction $\pi_1|_{B_k}:B_k\to N$ is an immersion.
\end{itemize}
This means that there exists a relatively compact semianalytic subset $B:=\bigcup_kB_k$ of $N\times N'$ of the same dimension as $S\cap U$ such that $\pi_1(B)=S\cap U$ and $\pi_1^{-1}(x_0)\cap B$ is a finite set. After substituting $A$ by $B$, we are under the hypothesis of \ref{ab}.

\paragraph{} After shrinking $U$ and using that $\pi_1^{-1}(x_0)\cap A$ is finite, \em there exists an open neighborhood $W$ of $\pi_1^{-1}(x_0)$ in $N\times N'$ and finitely analytic functions $f_i,g_{ij}\in\an(W)$ such that $A=\bigcup_{i=1}^r\{f_i=0,g_{i1}>0,\ldots,g_{is}>0\}$\em.

\paragraph{} Let $h\in\an(W)$ be such that $\pi_1^{-1}(x_0)\cap A\subset\{h>0\}$ and $\{h\geq0\}$ is compact. After shrinking $U$ we assume $S\cap U=\pi_1(A\cap\{h>0\}\cap\pi_1^{-1}(U))$. 

Now, we transform inequalities into equalities. For each $i=1,\ldots,r$ define
\begin{multline*}
X_i:=\{(x,y,z)\in W\times\R^{s+2}:\\
f_i(x,y)=0,z_1^2-g_{i1}(x,y)=0,\ldots,z_s^2-g_{is}(x,y)=0,z_{s+1}^2-h(x,y)=0,z_{s+2}=i\}
\end{multline*}
Let $X:=\bigcup_{i=1}^rX_i$, which is a compact analytic subset of $W\times\R^{s+2}$ (recall that $\{h\geq0\}$ is compact) of the same dimension as $A$, so $\dim(X)\leq\dim(N)$. Let $\pi_2:N\times N'\times\R^{s+2}\to N\times N'$ be the projection onto the first two factors. Let $g_i:=\prod_{j=1}^sg_{ij}$ and $g':=h\prod_{i=1}^r(g_i^2+(z_{s+2}-i)^2)$. Notice that 
\begin{align*}
&\pi_2(X\setminus\{g'=0\}\cap\pi_2^{-1}(\pi_1^{-1}(U)))=A\cap\{h>0\}\cap\pi_1^{-1}(U),\\
&\pi_1(\pi_2(X\setminus\{g'=0\}\cap\pi_2^{-1}(\pi_1^{-1}(U))))=S\cap U.
\end{align*} 
By Theorem \ref{ut} there exists a compact real analytic manifold $M$ (of the same dimension as $X$) and a proper real analytic map $p:M\to W$ such that $p(M)=X$. 

\paragraph{} By Whitney's immersion theorem for the analytic case \cite[2.15.12]{n2} we can embed $M$ in $\R^{2n+1}$ as a closed analytic submanifold where $n=\dim(N)\geq\dim(M)$. Write $g:=(g')^2\circ p$ and $\pi:=\pi_1\circ\pi_2\circ p:M\to N$. We have $\pi(\{g>0\}\cap\pi^{-1}(U))=S\cap U$.

\noindent{\bf Step 2.} {\em Global construction.} By Lemma \ref{cover} there exists a countable locally finite open refinement $\{V_j\}_{j\geq1}$ of $\{U^x\}_{x\in N}$ such that each $V_j=\{h_j>0\}$ is a open $C$-semianalytic subset of $N$ where $h_j\in\an(N)$. We have seen above that for each $j\geq1$ there exists a compact real analytic submanifold $M_j\subset\R^{2n+1}\times\{j\}\subset\R^{2n+2}$, an analytic function $g_j\in\an(M_j)$ and an analytic map $\pi_j:M_j\to N$ such that 
$$
S\cap V_j=\pi_j(\{g_j>0\}\cap\pi_j^{-1}(V_j))=\pi_j(\{g_j>0,(h_j\circ\pi_j)>0\}).
$$ 
Consider the real analytic manifold $M:=\bigsqcup_{j\geq1}M_j\subset\R^{2n+2}$, whose connected components are all compact. Let $g,h\in\an(M)$ be given by $g|_{M_j}=g_j$ and $h|_{M_j}=h_j\circ\pi_j$. Consider the analytic map $\pi:M\to N$ such that $\pi|_{M_j}=\pi_j$ and define $T:=\{g>0,h>0\}$, which is a basic $C$-semianalytic set. We have 
$$
\pi(T)=\pi(\{g>0,h>0\})=\bigcup_{j\geq1}\pi_j(\{g_j>0,(h_j\circ\pi_j)>0\})=\bigcup_{j\geq1}S\cap V_j=S.
$$

\paragraph{}It only remain to check: \em $\pi|_{\cl(T)}:\cl(T)\to N$ is proper\em. 

Let $K_0$ be a compact subset of $N$ and denote $K:=K_0\cap\cl(S)$. As $S=\bigcup_{j\geq1}S\cap V_j$ and the family $V_j$ is locally finite, $\cl(S)=\bigcup_{j\geq1}\cl(S\cap V_j)$ and the family $\{\cl(S\cap V_j)\}_{j\geq1}$ is locally finite. As $K$ is compact, we may assume $K\cap\cl(S\cap V_j)=\varnothing$ for $j\geq\ell$. As the family $\{M_j\}_{j\geq1}$ is locally finite and $M_j\cap M_k=\varnothing$ if $j\neq k$, we have $\cl(T)\cap M_j=\cl(T\cap M_j)$. In addition, $\pi(T\cap M_j)=S\cap V_j$. We claim: \em $\pi^{-1}(K)\cap\cl(T)\cap M_j=\varnothing$ for $j\geq\ell$\em. 

Suppose by contradiction that there exists $x\in\pi^{-1}(K)\cap\cl(T\cap M_j)$ for some $j\geq\ell$. Thus, as $\pi$ is continuous and $j\geq\ell$, 
\begin{multline*}
\pi(x)\in K\cap\pi(\cl(T)\cap M_j)=K\cap\pi(\cl(T\cap M_j))\\
\subset K\cap\cl(\pi(T\cap M_j))=K\cap\cl(S\cap V_j)=\varnothing,
\end{multline*}
which is a contradiction.

As $\pi^{-1}(K)\subset\bigcup_{j=1}^{\ell-1}\cl(T)\cap M_j$ and each $M_j$ is compact, we conclude that $\pi^{-1}(K)$ is compact, so $\pi|_{\cl(T)}:\cl(T)\to N$ is proper, as required.
\end{proof}

\end{document}